\newcommand{\Dt}{h}
\newcommand{\m}{\mathbf m }
\newcommand{\uu}{\mathbf u}
\newcommand{\vv}{\mathbf v}
\newcommand{\diver}{{\rm {div} \,\, }}
\newcommand{\varep}{\varepsilon}
\newcommand{\norm} [1]{\left\| {#1}\right\|}
\newcommand{\R} {\mathbb R}
\newcommand{\N} {\mathbb N}
\newcommand{\nablad} {\nabla\cdot}
\newcommand{\intb}[1]{\left\langle #1 \right\rangle}
\newcommand{\intd}[1]{\left( #1 \right)}
\newcommand{\esssup}{\mathop{\mathrm{ess\,sup}}}
\newcommand{\eqdef}{\overset{\mathrm{def}}{=\joinrel=}}
\def\beq{\begin{equation}}
\def\eeq{\end{equation}} 
\def\beqs{\begin{equation*}}
\def\eeqs{\end{equation*}}
\def\bals{\begin{align*}}
\def\eals{\end{align*}}
\def\bspl{\begin{split}}
\def\espl{\end{split}}
\def\myclearpage{}
\title{Existence of a solution of the mixed formulation for generalized Forchheimer flows of isentropic gases}
\author{Thinh Kieu \footnotemark[2]  }
\date{today}
\begin{document}

\maketitle
 
\renewcommand{\thefootnote}{\fnsymbol{footnote}}
\footnotetext[2]{Department of Mathematics, University of North Georgia, Gainesville Campus, 3820 Mundy Mill Rd., Oakwood, GA 30566, U.S.A. ({\tt thinh.kieu@ung.edu}).}               
               
\begin{abstract}  This paper is focused on the generalized Forchheimer flows of isentropic gas, described by a system of two nonlinear degenerating differential equations of first order. We prove the existence and uniqueness of the Dirichlet problem for stationary problem. The technique of semi-discretization in time is used to prove the existence for the time-dependent problem.
 \end{abstract}
            
 %\category{...}{...}{...}
            
%\terms{...} 
            
\begin{keywords}
Porous media, isentropic gas, slightly compressible fluids, generalized Forchheimer equations, existence.
\end{keywords}

\begin{AMS}
35Q35,  35D30, 35K55, 76S05.
\end{AMS}

\pagestyle{myheadings}
\thispagestyle{plain}
\markboth{Thinh Kieu}{Solvability of the mixed formulation for generalized Forchheimer flows of isentropic gases}

\myclearpage    
\section {Introduction}
 We consider a fluid in porous medium  occupying a bounded domain $\Omega\subset \R^d,$ $d\ge 2$ with boundary $\Gamma$. Let $x\in\R^d $, $0<T<\infty$ and $t\in (0,T]$ be the spatial and time variables respectively. The fluid flow has velocity $v(x,t)\in \R^d$, pressure $p(x,t)\in\R$ and density $  u(x,t)\in \R_+$.   

The  Darcy--Forchheimer equation,  which is considered as a momentum equation, is studied in \cite{ABHI1,HI1,HI2, Fabrie89} of the form 
\beq\label{gF}
-\nabla p =\sum_{i=0}^N a_i |v|^{\alpha_i}v. 
\eeq  

In order to take into account the presence of density in generalized Forchheimer equation, we modify \eqref{gF} using dimension analysis by Muskat \cite{Muskatbook} and Ward \cite{Ward64}. They proposed the following equation for both laminar and turbulent flows in porous media:
\beq\label{W}
-\nabla p =G\big(v^\alpha \kappa^{\frac {\alpha-3} 2} \rho^{\alpha-1} \mu^{2-\alpha}\big),\text{ where  $G$ is a function of one variable.}
\eeq 

In particular, when $\alpha=1,2$, Ward \cite{Ward64} established from experimental data that
\beq\label{FW} 
-\nabla p=\frac{\mu}{\kappa} v+c_F\frac{\rho}{\sqrt \kappa}|v|v,\quad \text{where }c_F>0.
\eeq

Combining  \eqref{gF} with the suggestive form \eqref{W} for the dependence on $\rho$ and $v$, we propose the following equation 
 \beq\label{FM}
-\nabla p= \sum_{i=0}^N a_i \rho^{\alpha_i} |v|^{\alpha_i} v,
 \eeq
where $N\ge 1,\alpha_0=0<\alpha_1<\ldots<\alpha_N$ are fixed real numbers, the coefficients $a_0(x,t), \ldots, a_N(x,t)$ are non-negative with 
$ 0<\underbar a <a_0(x,t), a_N(x,t)<\bar a<\infty,  0\le a_i(x,t) \le \bar a<\infty,\,  i=1,\ldots, N-1.$ 

Multiplying both sides of the equation \eqref{FM}  to $\rho$, we find that  
 \beq\label{eq1}
  \left(\sum_{i=0}^N a_i |\rho v|^{\alpha_i}\right) \rho v   =-\rho\nabla p.
 \eeq
 Denote the function $F:\Omega\times[0,T]\times\mathbb{R}^+\rightarrow\mathbb{R}^+$ a generalized polynomial with non-negative coefficients by
\beq\label{eq2}
F(x,t, z)=a_0(x,t)z^{\alpha_0} + a_1(x,t)z^{\alpha_1}+\cdots +a_N(x,t)z^{\alpha_N},\quad z\ge 0. 
\eeq 
The equation \eqref{eq1} can be rewritten as 
\beq\label{eq1a} 
F(x,t, |\rho v|)\rho v = -\rho\nabla p.
\eeq 
%\emph{Isentropic gases.} 
For isentropic gases, the constitutive law is
\beq\label{gas}
p=c\rho^\gamma\quad\text{for some } c,\gamma>1.
\eeq
Then from \eqref{eq1a} and \eqref{gas} follows
\beq\label{ru1}
 F(x,t, |\rho v|)\rho v   =-\rho\nabla p=-\nabla u\quad \text{with } u=\frac{c\gamma\rho^{\gamma+1}}{\gamma+1}.
\eeq

%\emph{Slightly compressible fluids.} 
%Under isothermal condition the equation of state, for slightly compressible fluids, is
%\beq
%\frac 1\rho \frac{d\rho}{dp}=\frac 1\kappa=const.>0.
%\eeq
%Hence
%\beq \label{eq3}
%\nabla \rho = \frac{1}{\kappa} \rho\nabla p, 
%\quad \text{ or }\quad  \rho \nabla p=\kappa\nabla \rho.
%\eeq
%Combining \eqref{eq1a} and \eqref{eq3} implies that  
% \beq\label{ru}
%  F(x,t, |\rho v|)\rho v  =-\nabla u\quad \text{with } u=\kappa \rho.
% \eeq
% which is the same as \eqref{ru1}. 
 
The continuity equation is
\beq\label{con-law}
\phi(x)\partial_t\rho+{\rm div }(\rho v)=f(x,t),
\eeq
where $\phi$ is the porosity, $f$ is external mass flow rate . 

Rewrite 
\beq\label{maineq1}
\rho=\left(\frac{\gamma+1}{c\gamma}\right)^\frac1{\gamma+1} u^\lambda \quad\text{with } \lambda=\frac1{\gamma+1}\in (0,1).
\eeq
Combining \eqref{con-law} with relation \eqref{maineq1}, we have
\beq\label{maineq2}
\phi(x) \left(\frac{\gamma+1}{c\gamma}\right)^\lambda \partial_t u^\lambda+ {\rm div }(\rho v)=f(x,t).
\eeq

By combining \eqref{ru1} and \eqref{con-law} we have
\beqs
\begin{aligned}
F(x,t,|\m|) \m = -\nabla u, \\
\phi(x) \left(\frac{\gamma+1}{c\gamma}\right)^\lambda \partial_t u^\lambda+ \diver{\m}=f(x,t),
\end{aligned}
\eeqs
where $\m=\rho v$.

By rescaling the variable $\phi(x)\to(\frac{\gamma+1}{c\gamma})^\lambda\phi(x) $. We obtain system of equations  
\beq\label{main-sys}
\begin{aligned}
F(x,t,|\m|) \m &= -\nabla u, \\
\phi(x)\partial_t u^\lambda+ \diver{\m}&=f(x,t).
\end{aligned}
\eeq  

%================================

The Darcy- Forchheimer equation in \eqref{main-sys} leads to  
%%%%%%%%%
 $$
 \mathcal F(|\m|)=F(x,t , |\m|) |\m| =| \nabla u|, \quad \text{ where } \mathcal F (s)=sF(s) . 
 $$
Since $\mathcal F$ is a one-to-one mapping from $[0,\infty)$ onto $[0,\infty)$, therefore one can find a unique non-negative $|\m|$ as a function of $|\nabla u|$,   
$$
|\m| = \mathcal F^{-1}(|\nabla u|). 
$$
Solving  for $\m$ from the first equation of \eqref{main-sys} gives 
\beq\label{rua} 
\m=-\frac{\nabla u}{F(x,t, \mathcal F^{-1}(|\nabla u|) )}= - K(x,t, |\nabla u |)\nabla u,
\eeq
where the function $K: \Omega\times[0,T]\times\mathbb{R}^+\rightarrow\mathbb{R}^+$ is defined for $\xi\ge 0$ by
\beq\label{Kdef}
K(x,t, \xi)=\frac{1}{F(x,t,s(x,t, \xi))},
\eeq
  with   $s=s(x,t, \xi)$  being the unique non-negative solution of  $sF(s)=\xi$.

Note that $$\mathcal F^{-1}(0)=0,\quad K(x,t,0) = \frac {1}{F(x,t,0)} = \frac 1{a_0(x,t)}>0 $$

%%%%%%%%%%%%%%
Substituting \eqref{rua} into the second equation of \eqref{main-sys} we obtain a scalar partial differential equation (PDE) for
the density:
\beq\label{mainEq}
\phi(x)\partial_t u^\lambda- \diver{ (K(x,t, |\nabla u |)\nabla u) }=f(x,t), \quad (x,t)\in\Omega\times[0,T]. 
\eeq

From mathematical point of view, equation \eqref{mainEq} for $\lambda<1$ is a doubly nonlinear parabolic equation, which is an interesting topic of its own.
Research on doubly nonlinear parabolic equations follows the development of 
general parabolic equations \cite{LadyParaBook68,LiebermanPara96} and degenerate/singular parabolic equations \cite{DiDegenerateBook, CHK1} (see also the treaties in \cite{IvanovBook82,LadyParaBook68,LadyEllipticbook,LiebermanPara96}.) However, it requires much more complicated techniques. See monograph \cite{IvanovBook82}, review paper \cite{ ivanov1997existence, ivanov1997regularity} and references therein.

In the this paper, we focus on proving the existence of weak solutions of the system \eqref{main-sys} for the Dirichlet boundary conditions with general coefficient functions, while imposing only minimal regularity assumptions. Such a problem was not studied in the literature previously.
%the inhomogeneous continuity and the Forchheimer-Darcy’s momentum equations are treated separately as a coupled system of first order PDE. This gives us the possibility to analyse nonconstant coefficients and nonconstant porosity. 
Our proof of solvability is based on the stationary problem first by applying the technique of the theory of nonlinear monotone operators (e.g., in \cite{BrezisMonotone,MR0259693,s97,z90}) to prove the existence  and uniqueness of a weak solution of the corresponding elliptic problem of \eqref{main-sys}. Then, using the technique of semi-discretization in time (see e.g.,  \cite{raviart69, Amirat1991, T1}), we prove the existence of weak solutions of the parabolic problem by constructing approximate solutions.  This approach can be extend in straightforward way to time-dependent nonlinear problems with degenerate coefficients or doubly nonlinear parabolic equations.
%============================================

The paper is organized as follows.  
  Section \S \ref{Intrsec} contains a brief summary of some notations and the relevant results.
  %--------------------------------------------------------------------
 In Section \S\ref{StatProb},  we consider the stationary problem of \eqref{main-sys}. The existence and uniqueness of a solution are proved in Theorem~\ref{stationaryProb}. 
  %--------------------------------------------------------------------
Section \S\ref{SemiProb} is intended to motivate our investigation of the semi-discrete problem after discretization of the time-derivative in \eqref{main-sys} and show again the existence and uniqueness of a solution in Theorem~\ref{Sol-semidiscreteProb} . 
  %--------------------------------------------------------------------
Section \S\ref{TransProb} is devoted to the study of  the transient problem governed by~\eqref{TProb} with homogeneous boundary conditions. We derive a priori estimates of the solutions to \eqref{semidiscrete-prob}. These are used to prove the solvability of the transient problem~\eqref{TProb}.

\section{Notations and preliminary results}\label{Intrsec}
Through out this paper, we assume that $\Omega$ is an open, bounded subset of $\mathbb{R}^d$, with $d=2,3,\ldots$, and has $C^1$-boundary $\partial \Omega$. For $s\in [0,\infty)$, we denote $L^{s}(\Omega)$ be the set of s-integrable functions on $\Omega$ and $( L^{s} (\Omega))^d$ the space of $d$-dimensional vectors which have all components in $L^{s}(\Omega)$.  We denote $(\cdot, \cdot)$ the inner product in either $L^{s}(\Omega)$ or $(L^{s}(\Omega))^d$ that is
$
( \xi,\eta )=\int_\Omega \xi\eta dx$  or $(\boldsymbol{\xi},\boldsymbol \eta )=\int_\Omega \boldsymbol{\xi}\cdot \boldsymbol{\eta} dx 
$ 
and $ \norm{u}_{L^s(\Omega)}=\left(\int_{\Omega} |u(x)|^s dx\right)^{1/s}$ for standard  Lebesgue norm of the measurable function. 
The notation $\intb{\cdot ,\cdot}$ will be used for the $L^s(\partial \Omega)$ inner-product. 
 For $m\ge 0, s\in [0,\infty]$, we denote the Sobolev spaces by $W^{m,s}(\Omega)=\{v \in L^s(\Omega),: D^{\alpha} v \in L^s(\Omega), |\alpha|\le m \}$ and the norm of $W^{m,s}(\Omega)$ by $
 \norm{v}_{W^{m,s}(\Omega)} = \left(\sum_{|\alpha|\le m}\int_{\Omega} |D^\alpha u|^s dx\right)^{1/s},\text { and }\norm{v}_{W^{m,\infty}(\Omega)} = \sum_{|\alpha|\le m}\esssup_{\Omega} |D^\alpha u|.$ The trace operator $\zeta : W^{m,s}(\Omega)\to W^{m-1/s,s}(\partial \Omega)$ is onto. We denote by $W_0^{m,s}(\Omega)$ the kernel of $\zeta$  and its dual space by $W^{-m,s^*}(\Omega)= (W_0^{m,s}(\Omega))'$, where $1/s+1/s^*=1$. The test space $\mathcal D(\Omega):=C_0^\infty(\Omega)$ is dense subset of $L^s(\Omega)$ and of $W_0^{m,s}(\Omega)$ and that of $\mathcal D(\bar \Omega):=\{\varphi, \varphi\in \mathcal  D(\R^n)  \}$ is a dense subspace of $W^{m,s}(\Omega)$. Finally we define $L^s(0, T;X)$ to be the space of all measurable functions $v : [0, T]\to X$ with the norm $\norm{v}_{L^s(0,T;X)}=\left(\int_0^T \norm{v(t)}_X^s dt\right)^{1/s}$, and $L^\infty(0, T;X)$ to be the space of all measurable functions $v: [0, T] \to X$ such that $v: t\to \norm{v(t)}_X$ is essentially bounded on $[0, T]$ with the norm $\norm{v}_{L^\infty(0,T;X)}=\esssup_{t\in[0,T]}\norm{v(t)}_X$.

Our calculations frequently use the following exponents
\begin{align}
\label{a-const }
   s= \alpha_N+2,\quad  \alpha=\frac{\alpha_N}{\alpha_N+1}, \quad s^*=2-\alpha=\frac{s}{s-1},  \\
  \label{Bdef}
 r=1+\lambda\in (1,2), \quad  r^*=\frac{\lambda+1}{\lambda}=1+\frac{1}{\lambda}\in (2, \infty).
 \end{align}
The arguments $C, C_1,C_2\ldots$ represent for positive generic constants and their values depend on exponents, coefficients of polynomial  $F$,  the spatial dimension $d$ and domain $\Omega$, independent of the initial and boundary data and time step. These constants may be different place by place. 
 
We introduce the space $W(\rm{div}; \Omega)$ defined by
 $
 W(\rm{div}; \Omega) =\left\{ \vv\in (L^s(\Omega))^d , \nabla\cdot\vv \in L^{ r^*}(\Omega)\right\} 
 $ 
 and equipped it with the norm
 $
 \norm{\vv}_{W(\rm{div}; \Omega)} = \norm{\vv}_{L^s} + \norm{\nabla\cdot \vv}_{L^{ r^*}}.
 $ 
Since $W(\rm{div};\Omega)$ is a closed subspace of $(L^s(\Omega))^{d}$, it follows that $W(\rm{div};\Omega)$ is a reflexive Banach space; the boundary $\vv\cdot\nu|_{\partial\Omega}$ exist and belong to $W^{-1/s,s^*}(\partial\Omega)$ and we have the Green's formula 
\beq
\int_\Omega \vv\nabla\psi dx+\int_\Omega \psi \nabla\cdot \vv dx= \int_{\partial\Omega} \psi \vv\cdot\nu d\sigma
\eeq  
hold for every $\vv\in W(\rm{div}; \Omega)$ and $\psi\in (W(\rm{div}; \Omega))'$ (see Lemma 3 in \cite{FM77}).
% We use short hand notations, 
%$
%\norm{u(t)}_{L^s} = \norm{ u(\cdot, t)}_{L^s(\Omega)}, \forall t\in (0,T)\text{
% and } u^0(\cdot) =  u(\cdot,0).
% $

%========================================
The function $F(\cdot)$ has the following properties.
\begin{lemma}The following inequality hold for all $y',y\in \R^d$ 
\begin{align}
\label{F-cont}
(i)\qquad &\left|F(x,t,|y'|)y'- F(x,t,|y|)y \right| \le C_1 \left(1+ |y'|^{\alpha_N}+ |y|^{\alpha_N} \right)|y'-y|.\\
\label{F-mono}
(ii)\qquad &\left(F(x,t,|y'|)y'- F(x,t,|y|)y \right)\cdot (y'-y) \ge  C_2 \left( |y'-y|^2 + |y'-y|^s\right),
\end{align}
where the constants $C_1(N,\bar a, \rm {deg} (F))>0$, and $ C_2(N,\underline{a}, \rm {deg} (F) )>0.$ 
\end{lemma}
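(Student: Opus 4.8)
The plan is to reduce both inequalities to two elementary estimates for the single vector field $g_\alpha(y):=|y|^\alpha y$ (with $\alpha\ge 0$, $y\in\R^d$) and then to sum the $N+1$ monomial contributions, using that $F(x,t,|y|)y=\sum_{i=0}^N a_i(x,t)\,g_{\alpha_i}(y)$ with $x,t$ frozen. The first building block is the Lipschitz-type bound
\[
\bigl|g_\alpha(y')-g_\alpha(y)\bigr|\le (\alpha+1)\bigl(|y'|^\alpha+|y|^\alpha\bigr)\,|y'-y|,
\]
and the second is the strong monotonicity
\[
\bigl(g_\alpha(y')-g_\alpha(y)\bigr)\cdot(y'-y)\ge c(\alpha)\,|y'-y|^{\alpha+2},\qquad c(\alpha)>0 .
\]
Both follow from the representation $g_\alpha(y')-g_\alpha(y)=\int_0^1 \partial_\theta g_\alpha(y_\theta)\,d\theta$ along the segment $y_\theta=y+\theta(y'-y)$, where $\partial_\theta g_\alpha(y_\theta)=|y_\theta|^\alpha(y'-y)+\alpha|y_\theta|^{\alpha-2}\bigl(y_\theta\cdot(y'-y)\bigr)y_\theta$ and $|y_\theta|\le\max\{|y|,|y'|\}$ by convexity of the norm: taking absolute values of the integrand yields the first estimate, whereas dotting it with $y'-y$ drops the second term (nonnegative, since $\alpha\ge 0$) and leaves it $\ge|y_\theta|^\alpha|y'-y|^2$, after which a lower bound for $\int_0^1|y_\theta|^\alpha\,d\theta$ finishes the proof. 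The second estimate is also exactly the classical strong monotonicity of the $p$-Laplacian vector field with $p=\alpha+2\ge 2$ (one may take $c(\alpha)=2^{-\alpha}$), so it may simply be quoted; in particular its left-hand side is always nonnegative.

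Granting these, part (i) is immediate: apply the first estimate to each $g_{\alpha_i}$, use $0\le a_i(x,t)\le\bar a$, and absorb every power through the trivial bound $|y|^{\alpha_i}\le 1+|y|^{\alpha_N}$ (valid since $0\le\alpha_i\le\alpha_N$); summing over $i=0,\dots,N$ then produces the constant $C_1=C_1(N,\bar a,\mathrm{deg}\,F)$ claimed. For part (ii), I would discard all intermediate terms $1\le i\le N-1$ by their nonnegativity; the term $i=0$ equals $a_0(x,t)\,|y'-y|^2\ge\underline{a}\,|y'-y|^2$, and the term $i=N$ is $\ge a_N(x,t)\,c(\alpha_N)\,|y'-y|^{\alpha_N+2}\ge\underline{a}\,c(\alpha_N)\,|y'-y|^{s}$ since $s=\alpha_N+2$; adding these two gives (ii) with $C_2=\underline{a}\min\{1,c(\alpha_N)\}$, which has the asserted dependence on $\underline{a}$ and $\mathrm{deg}\,F$ (the listed dependence on $N$ being harmless).

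The one place that needs genuine care is the lower bound $\int_0^1|y+\theta(y'-y)|^\alpha\,d\theta\ge c(\alpha)\,|y'-y|^\alpha$ behind the monotonicity estimate — that the segment joining $y$ and $y'$ cannot stay too close to the origin in an averaged sense. I would settle it by convexity of $\theta\mapsto|y_\theta|$ together with a two-case split according to whether $\min\{|y|,|y'|\}\le\tfrac12|y'-y|$ (then $|y_\theta|\ge\tfrac14|y'-y|$ on a subinterval of length at least $\tfrac12$) or $\min\{|y|,|y'|\}>\tfrac12|y'-y|$ (then the endpoint values already dominate $|y'-y|^\alpha$); alternatively, one sidesteps this computation entirely by invoking the known $p$-Laplacian inequality. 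Everything else is routine bookkeeping over the finitely many monomials of $F$, uniform in $x$ and $t$.
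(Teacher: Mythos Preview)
Your argument is correct and follows essentially the same route as the paper: both proofs integrate along the segment $y_\theta=y+\theta(y'-y)$, compute the derivative of $|y_\theta|^{\alpha}y_\theta$, and reduce part (ii) to the single lower bound $\int_0^1|y_\theta|^{\alpha_N}\,d\theta\ge c(\alpha_N)\,|y'-y|^{\alpha_N}$; the paper cites this from \cite{CHIK1}, while you cite the equivalent $p$-Laplacian strong monotonicity, and the only organizational difference is that you work monomial-by-monomial whereas the paper handles the full polynomial $F$ at once. One small caveat on your direct case-split sketch for that integral bound: in case~1 the subinterval you obtain has length~$\tfrac14$ rather than~$\tfrac12$, and in case~2 the endpoint values alone are not enough---you need the $|y'-y|$-Lipschitz continuity of $\theta\mapsto|y_\theta|$ to show that $|y_\theta|\ge\tfrac14|y'-y|$ on, say, $[0,\tfrac14]$---but these are easy fixes and your alternative of simply quoting the known inequality is clean and sufficient.
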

%-----------------------------------------
\begin{proof}

(i)  Let $\gamma(t)=\tau y'+ (1-\tau)y, \tau\in [0,1]$ and $h(t) =F(x,t,|\gamma(\tau)|)\gamma(\tau)$.   Then 
\begin{align*}
&\left|F(x,t,|y'|)y'- F(x,t,|y|)y\right|= |h(1)-h(0)| =\left|\int_0^1 h'(\tau) d\tau\right|\\
 &\qquad=\left|\int_0^1 F(x,t,|\gamma(\tau)|)(y'-y) + F_z(x,t,|\gamma(\tau)|)\frac{\gamma(\tau)(y'-y)}{|\gamma(\tau)|}\gamma(\tau)   d\tau \right|\\
&\qquad\le |y'-y|\int_0^1 F(x,t,|\gamma(t)|) + F_z(x,t, |\gamma(\tau)|)|\gamma(\tau)|  d\tau.
\end{align*}

Note that $ F_z(x,t, |\gamma(\tau)|)|\gamma(\tau)| =  \sum_{i=0}^N a_i\alpha_i |\gamma(\tau)|^{\alpha_i} \le \alpha_N F(x,t, |\gamma(\tau)|) $
thus 
\beqs
|F(x,t,|y'|)y'- F(x,t,|y|)y| \le(1+\alpha_N) |y'-y|\int_0^1 F(x,t,|\gamma(\tau)|)  d\tau. 
\eeqs
 Using the inequality $x^\beta \le 1 +x^\gamma$ for $x\ge 0,  0<\beta<\gamma$ we find that  
 \beqs
 F(x,t,s) \le \max_{i=0,\ldots, N}a_i(x,t)\sum_{i=0}^N 1+s^{\alpha_N}\le (N+1) \max_{i=0,\ldots, N} a_i(x,t)(1+s^{\alpha_N}). 
 \eeqs
 Thus
\begin{align*}
|F(x,t, |y'|)y'- F(x,t,|y|)y| &\le(1+\alpha_N)(N+1) \max_{i=0,\ldots, N} a_i |y'-y|\Big(1+ \int_0^1 |\gamma(\tau)|^{\alpha_N}  d\tau\Big)\\
&\le (1+\alpha_N)(N+1) \max_{i=0,\ldots, N}a_i |y'-y|\Big(1+ \int_0^1 (|y'|+ |y|)^{\alpha_N}  d\tau\Big) \\
&\le 2^{\alpha_N}(1+\alpha_N)(N+1) \max_{i=0,\ldots, N} a_i \Big(1+ |y'|^{\alpha_N}+ |y|^{\alpha_N} \Big)|y'-y|,
\end{align*}
which proves \eqref{F-cont} hold. 
 
(ii)  Let $k(\tau) =F(x,t,|\gamma(\tau)|)\gamma(\tau)(y'-y)$.  Then 
\begin{align*}
&(F(x,t,|y'|)y'- F(x,t,|y|)y)\cdot(y'-y)= k(1)-k(0) =\int_0^1 k'(\tau) d\tau\\
&\qquad =\int_0^1\Big (F(x,t, |\gamma(\tau)|)|y'-y|^2 + F_z(x,t, |\gamma(\tau)|)\frac{|\gamma(\tau)(y'-y)|^2}{|\gamma(\tau)|}\Big) d\tau.
\end{align*}
Note that 
\[
\frac{|\gamma(\tau)(y'-y)|^2}{|\gamma(\tau)|}=\frac{|\gamma(\tau)|^2|y'-y|^2 \cos^2(\beta(t))}{|\gamma(\tau)|}=|\gamma(t)||y'-y|^2 \cos^2(\beta(t)),  
\]
where $\beta(t)$ is angle between $\gamma(t)$ and $y'-y$. 
 It  implies that  
\begin{align*}
&(F(x,t,|y'|)y'- F(x,t,|y|)y)\cdot(y'-y)\\
&\qquad=|y'-y|^2\int_0^1 \Big(F(x,t,  |\gamma(\tau)|) + F_z(x,t,|\gamma(\tau)|)|\gamma(\tau)|\cos^2(\beta(t)\Big) d\tau \\
&\qquad \ge |y'-y|^2\int_0^1 (1+\alpha_1\cos^2(\beta(t)) F(x,t,|\gamma(\tau)|) d\tau\\
&\qquad \ge |y'-y|^2\Big(a_0+a_N\int_0^1 |\gamma(t)|^{\alpha_N} dt\Big).
\end{align*}
The two last inequality are obtained by using the inequalities 
\beqs
 F_z(x,t, |\gamma(\tau)|)|\gamma(\tau)| \ge \alpha_1 F(x,t, |\gamma(\tau)|)\quad \text { and } \quad F(x,t,|\gamma(\tau)|)\ge a_0+a_N|\gamma(\tau)|^{\alpha_N}.
\eeqs
It is proved (see e.g in \cite{CHIK1} Lemma 2.4) that 
\beqs
\int_0^1 |\gamma(t)|^{\alpha_N} dt \ge \frac{|y'-y|^{\alpha_N}}{2^{\alpha_N+1}(\alpha_N+1) }. 
\eeqs
Hence
\beqs
(F(x,t,|y'|)y'- F(x,t,|y|)y)\cdot(y'-y)\ge (1+\alpha_1)|y'-y|^2\left(a_0+a_N\frac{|y'-y|^{\alpha_N}}{2^{\alpha_N+1}(\alpha_N+1) }\right).
\eeqs
The proof is complete.
\end{proof}
%-----------------------------------------

We recall some elementary inequalities that will be used in this paper.
\begin{lemma} The following inequality hold for all $a,b\ge 0$, $\lambda\in (0,1]$. 
\begin{align}
\label{ee3}
 & \frac{a^p+b^p}2\le (a+b)^p\le 2^{|p-1|}(a^p+b^p)\quad  \text{for all } p>0. \hspace{2cm}\\
\label{H-cont} 
 &\left|a^\lambda - b^\lambda \right| \le |a-b|^\lambda. \hspace{2cm} \\
\label{Mono}
&\frac{|a-b|^2}{|a|^{1-\lambda}+|b|^{1-\lambda}}\le ( a^\lambda -b^\lambda)(a-b). \hspace{2cm}\\
\label{ineqa}
&(a^\lambda -b^\lambda)a\ge \frac{\lambda}{1+\lambda}(a^{ 1+\lambda } -b^{ 1+\lambda})=\frac 1{r^*}(a^{r} -b^{r}) . \hspace{2cm}
\end{align}

\end{lemma}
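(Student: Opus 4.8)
The four inequalities \eqref{ee3}--\eqref{ineqa} are elementary, and each collapses, after a normalization or a one-line substitution, to convexity (resp.\ concavity) of $t\mapsto t^p$ or to Young's inequality; the plan is to dispatch them one at a time. For \eqref{ee3} the left bound holds for every $p>0$ by monotonicity of $t\mapsto t^p$ (since $a,b\le a+b$ force $a^p,b^p\le(a+b)^p$; average the two); for the right bound, if $p\ge1$ convexity of $t\mapsto t^p$ gives $\bigl((a+b)/2\bigr)^p\le\frac12(a^p+b^p)$, i.e.\ $(a+b)^p\le2^{p-1}(a^p+b^p)$, while if $0<p<1$ one rescales so that $a+b=1$ and uses $a^p\ge a$, $b^p\ge b$ on $[0,1]$ to get $(a+b)^p=a+b\le a^p+b^p\le2^{1-p}(a^p+b^p)$. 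The middle step of that display is the subadditivity $(x+y)^\lambda\le x^\lambda+y^\lambda$ for $\lambda\in(0,1]$; applying it with $x=a-b$, $y=b$ (having assumed $a\ge b\ge0$ without loss) gives $a^\lambda\le(a-b)^\lambda+b^\lambda$, which is \eqref{H-cont}.

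For \eqref{ineqa} I would expand $(a^\lambda-b^\lambda)a=a^{1+\lambda}-ab^\lambda$ and bound the cross term by Young's inequality with the conjugate pair $r=1+\lambda$, $r^*=(1+\lambda)/\lambda$: $ab^\lambda\le\frac1{1+\lambda}a^{1+\lambda}+\frac{\lambda}{1+\lambda}\bigl(b^\lambda\bigr)^{(1+\lambda)/\lambda}=\frac1{1+\lambda}a^{1+\lambda}+\frac{\lambda}{1+\lambda}b^{1+\lambda}$; this rearranges to $(a^\lambda-b^\lambda)a\ge\frac{\lambda}{1+\lambda}(a^{1+\lambda}-b^{1+\lambda})$, which is the assertion because $\lambda/(1+\lambda)=1/r^*$ and $1+\lambda=r$. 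For \eqref{Mono} I would assume $a>b\ge0$ (equality being trivial), use the integral representation $a^\lambda-b^\lambda=\lambda\int_b^a t^{\lambda-1}\,dt$ together with the fact that $t^{\lambda-1}$ is nonincreasing on $[b,a]$ to get $a^\lambda-b^\lambda\ge\lambda(a-b)a^{\lambda-1}$, and then multiply by $a-b$ and use $a^{1-\lambda}\le a^{1-\lambda}+b^{1-\lambda}$.

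The one step that needs real care is getting \eqref{Mono} in the exact normalization stated: the estimate just sketched carries an extra factor depending on $\lambda$, so a sharper argument is required. The cleanest route is the substitution $u=a^\lambda$, $v=b^\lambda$ (so $a=u^{1/\lambda}$, $a^{1-\lambda}=u^{1/\lambda-1}$, and likewise for $b$), which turns \eqref{Mono} into $|u^{1/\lambda}-v^{1/\lambda}|\le|u-v|\,(u^{1/\lambda-1}+v^{1/\lambda-1})$, a statement about the secant of $t\mapsto t^{1/\lambda}$; here one must keep track of how the exponents $\lambda$ and $1-\lambda$ compare, and this is the part I expect to be the main obstacle. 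Everything else is routine once the correct convexity inequality or Young pair has been identified.
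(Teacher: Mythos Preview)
The paper does not actually prove this lemma; it merely records the four inequalities as ``elementary'' and moves on, so there is no argument to compare yours against. Your treatments of \eqref{ee3}, \eqref{H-cont}, and \eqref{ineqa} are correct and standard.

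Your hesitation about \eqref{Mono} is fully justified, because \eqref{Mono} as written in the paper is \emph{false} for $\lambda<\tfrac12$, which is precisely the regime the paper works in (recall $\lambda=1/(\gamma+1)$ with $\gamma>1$). To see this cleanly, assume $a>b>0$ and divide both sides of \eqref{Mono} by $a-b$; the claim becomes
\[
(a^\lambda-b^\lambda)\,(a^{1-\lambda}+b^{1-\lambda})\;\ge\;a-b.
\]
Expanding the left side gives $(a-b)+a^\lambda b^{1-\lambda}-a^{1-\lambda}b^\lambda$, so the inequality is equivalent to $a^\lambda b^{1-\lambda}\ge a^{1-\lambda}b^\lambda$, i.e.\ $(a/b)^\lambda\ge(a/b)^{1-\lambda}$. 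Since $a/b>1$, this holds if and only if $\lambda\ge\tfrac12$. A concrete counterexample is $\lambda=\tfrac14$, $a=16$, $b=1$: the left side of \eqref{Mono} equals $225/9=25$ while the right side equals $(2-1)(16-1)=15$.

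So the ``extra factor depending on $\lambda$'' you found with the integral argument is not a defect of your method but a defect of the stated inequality: the sharp version carries a constant, and your first estimate
\[
(a^\lambda-b^\lambda)(a-b)\;\ge\;\lambda\,\frac{(a-b)^2}{a^{1-\lambda}}\;\ge\;\lambda\,\frac{(a-b)^2}{a^{1-\lambda}+b^{1-\lambda}}
\]
is a correct and adequate replacement. This is harmless for the paper, since every use of \eqref{Mono} there (uniqueness in Theorem~\ref{Sol-semidiscreteProb}, the bound \eqref{sumb} in Lemma~\ref{boundedness2}) only needs the inequality up to a positive constant depending on $\lambda$. Do not spend further effort trying to remove the factor; the substitution $u=a^\lambda$, $v=b^\lambda$ cannot succeed for $\lambda<\tfrac12$.
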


\section {The steady-state problem}\label{StatProb}
We consider the stationary problem governed by the Darcy-Forchheimer equation and the stationary continuity equation together with Dirichlet boundary condition   
\beq\label{stationaryProb}
\begin{aligned}
F(x,|\m|) \m = -\nabla u\qquad x\in \Omega,\\
\diver{\m}=f(x) \qquad x\in \Omega,\\
u =-u_b(x)\qquad  x\in\partial \Omega.
\end{aligned}
\eeq
\subsection{The mixed formulation of the stationary problem}
The mixed formulation of \eqref{stationaryProb} reads as follows.
 Find $(\m,u)\in W(\rm {div}; \Omega)\times L^{ r}(\Omega)$ such that  
\beq\label{WeakStationaryProb}
\begin{aligned}
(F(x,|\m|) \m, \vv) -(  u, \nabla\cdot \vv)=- \langle  u_b, \vv\cdot \nu  \rangle \quad \text{ for all } \vv\in W(\rm {div}; \Omega),\\
(\nabla\cdot \m, q)=(f,q) \quad\text{ for all } q\in L^{ r}(\Omega).
\end{aligned}
\eeq
We introduce a bilinear form $b:W(\rm {div}; \Omega)\times L^{ r}(\Omega)\to \R$ by mean of 
\beqs
b(\vv,q)=  (\nabla \cdot \vv, q) \quad \text{ for all }  \vv\in W(\rm {div}; \Omega), q\in L^{ r}(\Omega),
\eeqs 
 and a nonlinear form $a: (L^{s}(\Omega))^d\times (L^{s}(\Omega) )^d\to \R$ by mean of  \beqs
 a(\uu,\vv)= (F(x,t,|\uu|)\uu, \vv ) \quad \text { for all } \uu,\vv\in (L^{s}(\Omega))^d. 
\eeqs
Then we rewrite the mixed formulation \eqref{WeakStationaryProb} as follows. Find $(\m,u)\in W(\rm div, \Omega)\times L^{ r}(\Omega)\equiv V\times Q$ such that  
\beq\label{equivform}
\begin{aligned}
a(\m, \vv) -b(\vv,u) =- \intb{u_b, \vv\cdot \nu } \quad \text{ for all } \vv\in W(\rm{div}; \Omega),\\
b(\m, q)=(f,q) \quad \text{ for all } q\in Q.
\end{aligned}
\eeq 
%\textcolor{red}{the existence of solution of above system was studied in M. J . Park \cite{EJP05}.}
%--------------------------------------
\subsection{Existence results} This subsection is devoted to establish  the existence and uniqueness of weak solution of the stationary problem~\eqref{stationaryProb}.   
 \begin{theorem}\label{SolofStationaryProb}
Suppose
$f\in L^{ r^*}(\Omega),$ and $u_b\in W^{1/s, s}  (\partial \Omega)$. The mixed formulation \eqref{WeakStationaryProb} of the stationary problem \eqref{stationaryProb} has a unique solution $(\m,u)\in W(\rm{div};\Omega)\times L^{ r}(\Omega)$.  
\end{theorem}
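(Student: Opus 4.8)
The plan is to recast the mixed problem \eqref{equivform} as an abstract saddle-point problem with a nonlinear but strictly monotone principal part and to invoke the Brezis theory of monotone operators on the space $V=W(\mathrm{div};\Omega)$, after eliminating the inhomogeneous boundary term by a lifting argument. First I would fix a lifting $\m_b\in W(\mathrm{div};\Omega)$ with $\m_b\cdot\nu|_{\partial\Omega}$ related to $u_b$ (or, more conveniently, absorb $u_b$ into the right-hand side as the functional $\vv\mapsto -\langle u_b,\vv\cdot\nu\rangle$, which by the trace estimate $\vv\cdot\nu\in W^{-1/s,s^*}(\partial\Omega)$ and the hypothesis $u_b\in W^{1/s,s}(\partial\Omega)$ defines a bounded linear functional on $V$). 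Next I would restrict to the kernel $V_0=\{\vv\in V:\ b(\vv,q)=0\ \forall q\in Q\}=\{\vv\in V:\ \nabla\cdot\vv=0\}$; on $V_0$ the second equation of \eqref{equivform} is automatically handled once one finds a single $\m_f\in V$ with $\nabla\cdot\m_f=f$ (possible since $f\in L^{r^*}(\Omega)$ and $\nabla\cdot$ is surjective from $W(\mathrm{div};\Omega)$ onto $L^{r^*}(\Omega)$), so that $\m=\m_f+\m_0$ with $\m_0\in V_0$ to be determined by the reduced problem $a(\m_f+\m_0,\vv)=-\langle u_b,\vv\cdot\nu\rangle$ for all $\vv\in V_0$.

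The heart of the argument is then to show that the operator $A:V_0\to V_0'$ defined by $\langle A\m_0,\vv\rangle=a(\m_f+\m_0,\vv)=(F(x,|\m_f+\m_0|)(\m_f+\m_0),\vv)$ is bounded, continuous (hemicontinuous), strictly monotone and coercive on $V_0$. Boundedness follows from \eqref{F-cont}, which gives $|F(x,|y|)y|\le C(1+|y|^{\alpha_N})|y|\le C(1+|y|^{s-1})$ and hence, via Hölder with exponents $s$ and $s^*$, a bound on $V_0'=((L^s)^d)'$ restricted to $V_0$, noting that on $V_0$ the $W(\mathrm{div})$-norm reduces to the $L^s$-norm. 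Monotonicity (indeed strict, by a positive lower bound) is exactly \eqref{F-mono}: $(A\m_0-A\tilde\m_0,\m_0-\tilde\m_0)=(F(\cdot,|\m_f+\m_0|)(\m_f+\m_0)-F(\cdot,|\m_f+\tilde\m_0|)(\m_f+\tilde\m_0),(\m_f+\m_0)-(\m_f+\tilde\m_0))\ge C_2\int_\Omega(|\m_0-\tilde\m_0|^2+|\m_0-\tilde\m_0|^s)\,dx>0$ when $\m_0\ne\tilde\m_0$. Coercivity likewise comes from taking $\tilde\m_0=0$ (or $\tilde\m_0=-\m_f$) in \eqref{F-mono}, which produces a lower bound of the form $C_2\|\m_f+\m_0\|_{L^s}^s$ minus lower-order terms, so $\langle A\m_0,\m_0\rangle/\|\m_0\|_{L^s}\to\infty$; hemicontinuity of $t\mapsto\langle A(\m_0+t\vv),\w\rangle$ is immediate from continuity of $F$ and dominated convergence using the growth bound. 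The Browder–Minty theorem then yields a unique $\m_0\in V_0$, hence a unique $\m=\m_f+\m_0\in V$ satisfying the reduced problem and $\nabla\cdot\m=f$.

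It remains to recover $u\in Q=L^r(\Omega)$ and to check uniqueness of the full pair. For $u$ I would verify the inf-sup (Babuška–Brezzi) condition for $b$: for every $q\in L^r(\Omega)$ there exists $\vv\in W(\mathrm{div};\Omega)$ with $\nabla\cdot\vv=|q|^{r-2}q$ (or a suitable representative) and $\|\vv\|_V\le C\|q\|_{L^r}^{r-1}$, using surjectivity of the divergence together with the duality $L^{r^*}=(L^r)'$ and $r^*=(\lambda+1)/\lambda$; this guarantees that the residual functional $\vv\mapsto a(\m,\vv)+\langle u_b,\vv\cdot\nu\rangle$, which vanishes on $V_0$ by construction, is represented as $b(\vv,u)$ for a unique $u\in Q$, recovering the first equation of \eqref{equivform} for all $\vv\in V$. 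Uniqueness of $\m$ is already established; uniqueness of $u$ follows from the inf-sup bound. The main obstacle I anticipate is the functional-analytic bookkeeping around the boundary term and the inf-sup / right-inverse-of-divergence constructions in the nonstandard exponent spaces $(L^s)^d$ and $L^r$ with $r^*$ large — in particular making precise the surjectivity of $\nabla\cdot:W(\mathrm{div};\Omega)\to L^{r^*}(\Omega)$ and the trace/Green's-formula framework (already quoted from \cite{FM77}); the monotone-operator core itself is routine once \eqref{F-cont}–\eqref{F-mono} are in hand.
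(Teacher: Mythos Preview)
Your proposal is correct but follows a genuinely different route from the paper.  The paper does \emph{not} reduce to the kernel $V_0$; instead it regularizes the full mixed system by adding penalty terms $\varepsilon(|\nabla\cdot\m_\varepsilon|^{r^*-2}\nabla\cdot\m_\varepsilon,\nabla\cdot\vv)$ and $\varepsilon(u_\varepsilon^\lambda,q)$, so that the resulting operator $\mathcal A_\varepsilon$ is continuous, strictly monotone and coercive on the \emph{product} space $V\times Q$ and Browder--Minty applies directly there.  It then derives $\varepsilon$-independent bounds (using the same inf-sup Lemma you invoke) and passes to the limit $\varepsilon\to 0$; uniqueness is handled separately at the end, exactly as you do.

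What each approach buys: your kernel/inf-sup reduction is the classical Babu\v ska--Brezzi treatment of mixed problems and is more economical here --- once you have surjectivity of $\nabla\cdot:W(\mathrm{div};\Omega)\to L^{r^*}(\Omega)$ (equivalently the inf-sup condition, which the paper states as Lemma~\ref{dualnorm}), a single application of Browder--Minty on $V_0$ suffices, with no regularization, no $\varepsilon$-uniform estimates, and no limit passage.  The paper's regularization route is longer but has two advantages in context: it avoids having to verify that the boundary functional $\vv\mapsto-\langle u_b,\vv\cdot\nu\rangle$ is bounded on $V_0$ with respect to the bare $L^s$-norm (your lifting argument handles this, but it is an extra step), and, more importantly, the same regularization template is reused verbatim in Section~\ref{SemiProb} for the semi-discrete problem, where the added term $(\phi h^{-1}u^\lambda,q)$ already provides partial coercivity in $Q$ and the regularization machinery fits naturally.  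Both proofs ultimately rest on \eqref{F-cont}--\eqref{F-mono} for the nonlinear part and on the inf-sup condition for recovering $u$.
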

%-------------------------------------------------------------------------
\begin{proof}
 We use regularization to show the existence of a weak solution $(\m, u)\in V\times Q$ to problem~\eqref{WeakStationaryProb}. The proof will be divided into four steps.  
In step 1, we introduce an approximate problem. 
In step 2 we show that the approximate solution $(\m_\varep, u_\varep)$ is bounded independence of $\varep$.  
In step 3 we prove the limit $(\m, u)$ of the approximate solution $(\m_\varep, u_\varep)$ satisfying problem \eqref{WeakStationaryProb}. 
Step 4 is devoted to prove the uniqueness of weak solution $(\m, u)$ to the  problem \eqref{WeakStationaryProb}. 

{\bf Step 1. } For the fixed $\varep>0$, we consider the following regularized problem. Find $(\m_\varep,u_\varep)\in V \times Q$ such that  
\beq\label{reg-prob}
\begin{aligned}
a(\m_\varep, \vv)+ \varep (|\nabla\cdot \m_\varep|^{ r^*-2} \nabla\cdot \m_\varep ,\nabla\cdot \vv)  - b(\vv,u_\varep) =- \intb{u_b,\quad \vv\cdot \nu } \quad  \text{ for all } \vv\in V,\\
\varep(u^\lambda_\varep, q)+ b(\m_\varep, q)=(f,q) \quad  \text{ for all } q\in Q.
\end{aligned}
\eeq 
%==================================
\begin{lemma}\label{StatSol}
For every $\varep>0$, there is a unique solution $(\m_\varep, u_\varep)\in V\times Q$ of the regularized  problem \eqref{reg-prob}.  
\end{lemma}
%=================================
\begin{proof}
Adding the left hand side of \eqref{reg-prob}, we obtain the nonlinear form defined on $V\times Q$, 
\beq\label{a-eps}
a_\varep((\m_\varep,u_\varep), (\vv,q) ) \eqdef a(\m_\varep,\vv) + \varep(|\nabla\cdot \m_\varep|^{ r^*-2} \nabla\cdot \m_\varep ,\nabla\cdot \vv) - b(\vv, u_\varep)+\varep(u^\lambda_\varep,q)+b(\m_\varep,q)\text{ for all } (\vv, q)\in V\times Q. 
\eeq
A nonlinear operator $\mathcal A_\varep: V\times Q \to (V\times Q)'$ defined by 
\beqs
\intb{\mathcal A_\varep((\uu,p)), (\vv,q) }_{(V\times Q)'\times (V\times Q)} = a_\varep((\uu,p), (\vv,q)). 
\eeqs
Then 
$\mathcal A_\varep$ is continuous, coercive and strictly monotone. 

Applying the theorem of Browder and Minty (see in \cite{zeidler1989}, Thm. 26.A) for every $\tilde f\in (V\times Q)'$, there exists unique a solution $(\m_\varep, u_\varep)\in V\times Q$ of the operator equation $\mathcal A_\varep (\m_\varep,  u_\varep) = \tilde f$. In particular, we choose the linear form $\tilde f$ defined by $\tilde f (\vv, q) :=  -\langle   u_b, \vv\cdot \nu  \rangle + (f,q)$, which arises by adding the right hand sides of \eqref{reg-prob}.
Therefore \eqref{reg-prob} has a unique solution.

What is left is to show that  $\mathcal A_\varep$ is continuous, coercive and strictly monotone.
   
     For the continuity,  
\begin{multline}\label{est0}
\intb{\mathcal A_\varep((\uu_1,p_1)-\mathcal A_\varep((\uu_2,p_2)), (\vv,q) }_{(V\times Q)'\times (V\times Q)}
=a(\uu_1,\vv)-a(\uu_2,\vv) \\
+ \varep(|\nabla\cdot \uu_1|^{ r^*-2} \nabla\cdot \uu_1-|\nabla\cdot \uu_2|^{ r^*-2} \nabla\cdot \uu_2,\nabla\cdot\vv)
 +\varep(p^\lambda_1-p^\lambda_2,q)
  - b(\vv, p_1-p_2)+b(\uu_1-\uu_2,q).
\end{multline}
By \eqref{F-cont} and using H\"older's inequality  
\beqs
\begin{split}
a(\uu_1,\vv)-a(\uu_2,\vv) &\le \intd{ (1+|\uu_1|^{s-2}+|\uu_2|^{s-2})|\uu_1-\uu_2|, |\vv|}\\
&\le C \left(1+\norm{\uu_1}_{L^s}^{s-2}+ \norm{\uu_2}_{L^s}^{s-2}\right)\norm{\uu_1-\uu_2}_{L^s}.
\end{split}
\eeqs
On account of \eqref{H-cont} and using H\"older's inequality 
\begin{align*}
&\intd{p^\lambda_1-p^\lambda_2,q}\le (|p_1-p_2|^\lambda, q)\le \norm{p_1-p_2}_{L^{ r}}^\lambda\norm{q}_{L^{ r}},\\
&b(\vv, p_1-p_2) \le \norm{\nabla\cdot \vv}_{L^{ r^* } }\norm{p_1-p_2}_{L^{ r}}, \quad \text{ and }\quad 
b(\uu_1-\uu_2, q) \le \norm{\nabla\cdot(\uu_1-\uu_2)}_{L^{ r^* } }\norm{q}_{L^{ r}}.
\end{align*}
From \eqref{F-cont}, we find that 
\begin{align*}
\intd{|\nabla\cdot \uu_1|^{ r^*-2} \nabla\cdot \uu_1-|\nabla\cdot \uu_2|^{ r^*-2} \nabla\cdot \uu_2,\nabla\cdot\vv}&\le C\intd{|1+|\nabla\cdot \uu_1|^{ r^*-2}+|\nabla\cdot \uu_2|^{ r^*-2}|\cdot |\nabla\cdot (\uu_1- \uu_2)|,|\nabla\cdot\vv|}\\
&\le C\left(1+\norm{\nabla\cdot \uu_1}_{L^{ r^*}}^{ r^*-2}+\norm{\nabla\cdot \uu_2}_{L^{ r^*}}^{ r^*-2}\right) \norm{\nabla\cdot (\uu_1- \uu_2)}_{L^{ r^*}}\norm{\nabla\cdot\vv}_{L^{ r^*}}.
\end{align*}
From the above it follows that
\begin{multline*}
\intb{\mathcal A_\varep((\uu_1,p_1)-\mathcal A_\varep((\uu_2,p_2), (\vv,q) )}_{(V\times Q)'\times (V\times Q)}
\le  C_\varep\left( 1+\norm{\uu_1}_{L^s}^{s-2}+\norm{\uu_2}_{L^s}^{s-2}\right)\\
(1+\norm{\nabla\cdot \uu_1}_{L^{ r^*}}^{ r^*-2}+\norm{\nabla\cdot \uu_2}_{L^{ r^*}}^{ r^*-2})
\left(\norm{\uu_1-\uu_2}_V+\norm{p_1-p_2}_Q +\norm{p_1-p_2}_Q^\lambda \right)\left( \norm{\vv}_V+ \norm{q}_Q\right),
\end{multline*}
for all $\vv\in V, q\in Q.$  This yields 
\begin{multline*}
\norm{\mathcal A_\varep((\uu_1,p_1)-\mathcal A_\varep((\uu_2,p_2)}_{(V\times Q)'}
\le C_\varep\left( 1+\norm{\uu_1}_{L^s}^{s-2}+\norm{\uu_2}_{L^s}^{s-2}\right)\\
(1+\norm{\nabla\cdot \uu_1}_{L^{ r^*}}^{ r^*-2}+\norm{\nabla\cdot \uu_2}_{L^{ r^*}}^{ r^*-2})
\left(\norm{\uu_1-\uu_2}_V+\norm{p_1-p_2}_Q +\norm{p_1-p_2}_Q^\lambda \right).
\end{multline*}

For $\mathcal A_\varep$ is the coercive.     
\beqs
\begin{split}
\intb{\mathcal A_\varep(\uu,p), (\uu,p) }_{(V\times Q)'\times (V\times Q)}
&\ge C \left( \norm{\uu}^2+  \norm{\uu}_{L^s}^{s}\right) +\varep \left( \norm{\nabla \cdot\uu}_{L^{ r^*}}^{ r^*}  +\norm{p}_Q^{ r}\right)\\
&\ge C_{\varep} \left(  \norm{\uu}_{L^s}^{s} +\norm{\nabla \cdot\uu}_{L^{ r^*}}^{ r^*}  +\norm{p}_Q^{ r}\right), 
\end{split}
\eeqs  
whence 
\beqs
\frac{\intb{\mathcal A_\varep(\uu,p), (\uu,p) }_{(V\times Q)'\times (V\times Q)}}{ \norm{\uu}_V +\norm{p}_Q }
\ge C_{\varep} \frac{\norm{\uu}_{L^s}^{s} +\norm{\nabla \cdot\uu}_{L^{ r^*}}^{ r^*}  +\norm{p}_Q^{ r}}{\norm{\uu}_{L^s} +\norm{\nabla\cdot \uu}_{L^{ r^*}} +\norm{p}_Q  }. 
\eeqs  
Therefore we deduce that 
\beqs
\lim_{\norm{(\vv,q)}_{V\times Q}\to +\infty} \frac{\intb{\mathcal A_\varep(\uu,p), (\uu,p) }_{(V\times Q)'\times (V\times Q)}}{ \norm{\uu}_V +\norm{p}_Q } =+\infty.  
\eeqs  

For $\mathcal A_\varep$ is the strictly monotone. 
\begin{multline*}
\intb{\mathcal A_\varep (\uu,p)-\mathcal A_\varep(\vv,q), (\uu-\vv,p-q) }_{(V\times Q)'\times (V\times Q)}\\
\ge C(\norm{\uu-\vv}^2 +\norm{\uu-\vv}_{L^s}^s ) + \varep\left(\norm{\nabla\cdot (\uu-\vv)}_{L^{ r^*}}^{ r^*} +\norm{p-q}_{Q}^{ r}\right)\\
 \ge C_{\varep}  \left( \norm{\uu-\vv}_{L^s}^s +\norm{\nabla\cdot(\uu-\vv)}_{L^{ r^*}}^{ r^*}  +\norm{p-q}_Q^{ r}\right) >0 \quad  \text{ for all } (\uu,p)\neq(\vv, q).    
\end{multline*}
\end{proof}
%============================

{\bf Step 2.} 
Next, we show that the solution $(\m_\varep,  u_\varep)$ is bounded independently of $\varep$. To do this, we use the following result (see in \cite{PG16} Lemma A.3 or \cite {Sandri1993}~Lemma A.1),
\begin{lemma}\label{dualnorm}
Let $s>1$ and $1/s+1/s^*=1$. Then there exists a constant $C_*>0$ such that
\beq\label{supinfcdn}
C_*\norm{q}_{L^{s^*}}\le \sup_{\vv \in W(\rm{div},\Omega)} \frac{b(\vv,q)}{\norm{\vv}_{ W(\rm{div},\Omega)} } \quad \text{ for all } \vv\in  W(\rm{div},\Omega), q\in L^s(\Omega).
\eeq 
\end{lemma} 
%==================================
\begin{lemma}\label{stationary-sol-indep-eps}
There exists $\mathcal C>0$ independent of $\varep$ such that for sufficiently small $\varep>0$ the solution $(\m_\varep, u_\varep)$ of \eqref{reg-prob} satisfies the following estimates
\beq\label{uQmV}
\norm{ u_\varep}_Q+ \norm{\m_\varep}_V\le \mathcal C.
\eeq
\end{lemma}
%=================================
\begin{proof}
We begin with a bound for the norm of $\nabla\cdot\m_\varep$. Using the second equation of \eqref{reg-prob} with $q={\rm sgn}(\nabla\cdot \m_\varep) |\nabla\cdot \m_\varep|^{{ r^* - 1}}$, we obtain
\beqs
\norm{\nabla\cdot \m_\varep}_{L^{ r^*}}^{ r^*} 
\le  \norm{f}_{L^{ r^*}}\norm{\nabla\cdot \m_\varep}_{L^{ r^*}}^{{ r^* - 1}} + \varep \norm{ u_\varep}_{L^{ r}}^{\lambda} \norm{\nabla\cdot \m_\varep}_{L^{ r^*}}^{{ r^* - 1}}.
\eeqs
It implies that
\beq\label{bound-divm}
\norm{\nabla\cdot \m_\varep}_{L^{ r^*}} \le  \norm{f}_{L^{ r^*}} + \varep \norm{ u_\varep}_Q^{\lambda}.
\eeq
Taking the test function $(\vv, q)=(\m_\varep,  u_\varep)$ in \eqref{reg-prob} gives
\beq\label{estRHS1}
\begin{aligned}
&a(\m_\varep, \m_\varep)+\varep(|\nabla\cdot \m_\varep|^{ r^*-2} \nabla\cdot \m_\varep, \nabla\cdot\m_\varep)+ \varep( u_\varep^\lambda,  u_\varep)\\
&\qquad=- \langle   u_b, \m_\varep\cdot \nu  \rangle + (f, u_\varep)  =-\intd{ \nabla \cdot \m_\varep,   u_b }-\intd{ \nabla u_b , \m_\varep}+ \intd {f, u_\varep} \\
&\qquad\le \norm{ u_b}_{V'} \left(\norm{\m_\varep}_{L^s} + \norm{\nabla \cdot \m_\varep}_{L^{ r^*}} \right)  +\norm{f}_{L^{ r^*}}\norm{ u_\varep}_{L^{ r}} .
\end{aligned}
\eeq
Using \eqref{bound-divm} and the fact that $a(\m_\varep, \m_\varep)\ge C(\norm{\m_\varep}_{L^s}^{s}+\norm{\m_\varep}_{L^2}^{2}) $ , we may conclude that 
\beq\label{bound-mvarep}
C\norm{\m_\varep}_{L^s}^{s} +\varep\left(\norm{\nabla \cdot \m_\varep}_{L^{ r^*}}^{ r^*} +\norm{ u_\varep}_Q^{ r}\right)
\le  \norm{ u_b}_{V'} \left(\norm{\m_\varep}_{L^s} +\norm{f}_{L^{ r^*}} + \varep \norm{ u_\varep}_Q^{\lambda} \right)  +\norm{f}_{L^{ r^*}}\norm{ u_\varep}_{Q}. 
\eeq
To bound $ u_\varep$ we employ the inf-sup condition \eqref{supinfcdn}. The first equation of \eqref{reg-prob} and the above estimate for $\norm{ \nabla\cdot\m_\varep}_{L^{ r^*}}$, we have 
\beqs
\begin{split}
&C_*\norm{ u_\varep}_Q\le \sup_{\vv\in V} \frac{b(\vv, u_\varep)}{\norm{\vv}_V} 
= \sup_{\vv\in V} \frac{a(\m_\varep,\vv) +\varep(\nabla\cdot \m_\varep|^{ r^*-2} \nabla\cdot \m_\varep,\nabla\cdot\vv)+ \langle   u_b, \vv\cdot \nu  \rangle}{\norm{\vv}_V}\\
&\quad\le\sup_{\vv\in V} \frac{C(\norm{\m_\varep}_{L^s}+\norm{\m_\varep}_{L^s}^{s-1} ) \norm{ \vv}_{L^s}+\varep\norm{\nabla\cdot \m_\varep}_{L^{ r^*}}^{{ r^* - 1}}\norm{\nabla\cdot \vv}_{L^{ r^*}} +\norm{ u_b}_{V'} (\norm{\vv}_{L^s} + \norm{\nabla \cdot \vv} )   }{\norm{\vv}_V}\\
&\quad\le C\left(\norm{\m_\varep}_{L^s}+\norm{\m_\varep}_{L^s}^{s-1}\right ) +\varep\Big( \norm{f}_{L^{ r^*}} + \varep \norm{ u_\varep}_Q^{\lambda} \Big)^{{ r^* - 1}} +\norm{ u_b}_{V' }\\
&\quad\le C\left(\norm{\m_\varep}_{L^s}+\norm{\m_\varep}_{L^s}^{s-1}\right ) +\varep 2^{ r^*-2}  \norm{f}_{L^{ r^*}}^{{ r^* - 1}} + \varep^{2} 2^{ r^*-2} \norm{ u_\varep}_Q +\norm{ u_b}_{V' }
\end{split}
\eeqs
for some constant $C_*>0$. Hence, for sufficiently small $\varep$ (e.g.,  $\varep\le (2^{1- r^* }C_*)^{1/2}$), 
\beq\label{bound-rhovarep}
\norm{ u_\varep}_Q\le C\left(\norm{\m_\varep}_{L^s}+\norm{\m_\varep}_{L^s}^{s-1}  +\norm{f}_{L^{ r^*}}^{{ r^* - 1}} +  \norm{ u_b}_{V' }\right). 
\eeq  
Substituting \eqref{bound-rhovarep} into \eqref{bound-mvarep} leads to 
\begin{align*}
 \norm{\m_\varep}_{L^s}^{s} 
\le C\norm{ u_b}_{V'} \left(\norm{\m_\varep}_{L^s} +\norm{f}_{L^{ r^*}}\right)+ C\left(\norm{ u_b}_{V'}+\norm{f}_{L^{ r^*}}\right)
\left( \norm{\m_\varep}_{L^s}^{s-1}+\norm{\m_\varep}_{L^s}  +\norm{f}_{L^{ r^*}}^{{ r^* - 1}} +  \norm{ u_b}_{V' }+1\right).
\end{align*}
Then by using Young's inequality, we obtain
\beq\label{mLsbound}
 \norm{\m_\varep}_{L^s}^{s} \le C_1,
\eeq
where 
$
C_1= C\left(\norm{ u_b}_{V'} + \norm{f}_{L^{ r^*}}\right)\big(\norm{ u_b}_{V'} + \norm{f}_{L^{ r^*}}^{{ r^* - 1}}\big) +\left(\norm{ u_b}_{V'} + \norm{f}_{L^{ r^*}}\right)^s+1.
$

Insert \eqref{mLsbound} into \eqref{bound-rhovarep} yields 
\beq\label{ub1}
\norm{ u_\varep}_Q\le C_2, 
\eeq
where
$
 C_2=C_1^{1/s}+C_1^{(s-1)/s}  +\norm{f}_{L^{ r^*}}^{{ r^* - 1}} +  \norm{ u_b}_{V' }. 
$
Using this estimate in \eqref{bound-divm} yields 
\beqs
\norm{\nabla\cdot \m_\varep}_{L^{ r^*}} \le  \norm{f}_{L^{ r^*}} +  C_2^{{ r^* - 1}} .
\eeqs
Therefore 
\beq\label{mb1}
\norm{\m_\varep}_V\le  C_1+ C_2^{{ r^* - 1}}.
\eeq
 The  assertion of the lemma follows from \eqref{ub1} and \eqref{mb1}. 
\end{proof}

{\bf Step 3.} Adding the left hand side of \eqref{WeakStationaryProb}, we obtain the following nonlinear form defined on $V\times Q$ by
\beqs
a( (\m, u), (\vv,q)  ):= a(\m, \vv) - b(\vv, u) + b(\m, q).
\eeqs
Consider the nonlinear operator $\mathcal A: V\times Q \to (V\times Q)'$ defined by 
\beqs
\intb{\mathcal A(\uu,p), (\vv,q)}_{ (V\times Q)' \times (V\times Q) }:=a( (\uu, p), (\vv,q)  ).
\eeqs
Set $\varep=1/n$, and let $(\m_n,   u_n)$ be the unique solution of the regularized problem \eqref{reg-prob}. Since $(\m_n,  u_n)$ is a bounded sequence in $V\times Q,$ there exists a weakly convergent subsequence, again denoted by $(\m_n,  u_n)$, with weak limit $(\m, u)\in V\times Q.$ For  $\tilde f(\vv,q) :=- \langle   u_b, \vv\cdot \nu  \rangle +(f,q) \in (V\times Q)',$   %As (using \eqref{a-eps} to obtain the second equation in below)
\beq
\begin{split}
\norm{\mathcal A(\m_n,  u_n) -\tilde f}_{(V\times Q)'} &= \sup_{(\vv,q)\neq {\bf 0} } \frac{| a((\m_n,  u_n), (\vv,q))-\tilde f(\vv,q) |}{ \norm{(\vv,q)}_{V\times Q} }\\
&=\sup_{(\vv,q)\neq {\bf 0} } \frac{| a(\m_n,\vv) -b(\vv,   u_n)+b(\m_n,q) - \tilde f(\vv,q) |}{ \norm{(\vv,q)}_{V\times Q} }.
\end{split}
\eeq 
Noting from \eqref{reg-prob} that
\begin{align*}
\left|a(\m_n,\vv) -b(\vv,   u_n)+b(\m_n,q) - \tilde f(\vv,q)\right|
&=\frac 1 n \left|\intd{ |\nabla\cdot\m_n|^{ r^*-2} \nabla\cdot\m_n ,\nabla\cdot\vv}+ \intd{  u_n^\lambda,q} \right|\\
&\le \frac C n \left(\norm{\nabla\cdot\m_n}_{L^{ r^*}}^{{ r^* - 1}}\norm{\nabla\cdot\vv}_{L^{ r^*}} + \norm{  u_n}_{L^{ r}}\norm{q}_{L^{ r}} \right)\\
&\le \frac C n \left(\norm{\nabla\cdot\m_n}_{L^{ r^*}}^{{ r^* - 1}} + \norm{  u_n}_{Q} \right)\norm{(\vv,q)}_{V\times Q} .
\end{align*}
Hence 
\beq
\begin{split}
\norm{\mathcal A(\m_n,  u_n) -\tilde f}_{(V\times Q)'} 
\le \frac{C}{ n}\left( \norm{\nabla\cdot\m_n}_{L^{ r^*}}^{{ r^* - 1}} +\norm{  u_n}_Q  \right)\overset{n\to\infty}{\longrightarrow} 0.
\end{split}
\eeq 

The sequence $\mathcal A(\m_n,  u_n)$ converges strongly in $(V\times Q)'$ to $\tilde f$.  Thus we can
conclude that $\mathcal A(\m, u) = \tilde f$ in $(V\times Q)' $ (see e.g. \cite{z90}, p. 474), i.e., $(\m, u)$ is a solution of problem~\eqref{equivform}.
%=============================================

{\bf Step 4.} To show the uniqueness we consider two solutions $(\m_1,   u_1)$ and  $(\m_2,  u_2)$ of \eqref{WeakStationaryProb}. Using the test function $(\vv,q)=(\m_1-\m_2,  u_1-  u_2)$, we obtain 
\beq
\begin{aligned}
a(\m_1,\m_1-\m_2)-a(\m_2,\m_1-\m_2)  -\left( b(\m_1-\m_2,  u_1)- b(\m_1-\m_2,  u_2)  \right) = 0, \\
b(\m_1,   u_1-  u_2)-b(\m_2,   u_1-  u_2)=0.
\end{aligned}
\eeq
Adding these equations and using the monotonicity of $F(\cdot)$ in \eqref{F-mono} yield 
\beqs
\begin{split}
0=a(\m_1,\m_1-\m_2)-a(\m_2,\m_1-\m_2) 
\ge C_2 \left(\norm{ \m_1-\m_2}_{L^2}^2+\norm{ \m_1-\m_2}_{L^s}^s  \right) .
\end{split} 
\eeqs 
It follows that $\m_1=\m_2$. If $\m\in V$ is given then $u\in L^{ r}(\Omega)$ is defined as a solution of the variational equation $b(\vv, u)= \langle   u_b, \vv\cdot \nu  \rangle+a(\m,\vv)   $ for all $\vv\in V$. The uniqueness of $u$ is directly consequence of  Lemma~\ref{dualnorm}.
\end{proof}

%===========================================   
\section {The semi-discrete problem}\label{SemiProb}

We return to the transient problem governed by \eqref{main-sys}. We discretize \eqref{main-sys} in time using the implicit Euler method. This yields not only a method to solve the transient problem numerically, but also an approach to prove its solvability, the technique of semi-discretization. We define a partition $0 = t_0 < t_1 < . . . < t_J = T$ of the segment $[0, T]$ into $J$ intervals of constant length $h = T/J$, i.e., $t_j = jh$ for $j = 0, \ldots ,J$. In the following for $j = 0, \ldots ,J$ we use the denotations
$ u^j := u(\cdot, jt)$ and $\m^j := \m(\cdot, jt)$ for the unknown solutions and, analogously defined, $  u_b^j$ for the boundary conditions and $f^j$ for the source term. 
\beq\label{semidiscreteProb}
\begin{aligned}
\Big(\sum_{i=0}^N a_i^j|\m^j|^{\alpha_i}\Big) \m^j = -\nabla  u^j  \quad x\in\Omega, \\
\phi\frac{ (u^j)^\lambda -  (u^{j-1})^\lambda}{h} +\nablad \m^j=f^j \quad x\in\Omega,\\
u = - u_b^j  \quad x\in\partial\Omega,\\
u(x,0)=u_0(x) \quad x\in\Omega.
\end{aligned}
\eeq
For each $j\in \{1,\ldots, J\}$, we will make the following assumptions: 
\beqs 
0<\underline{\phi}\le \phi(x) \le \overline\phi<\infty;  f^j \in L^{ r^*}(\Omega);  u_b^j\in W^{1/s, s}  (\partial \Omega),  u_0\in W^{1,s^*}(\Omega)\cap L^{ r}(\Omega); a_i^j(x)\in L^\infty(\Omega), i=0,\ldots,N.
\eeqs

{\bf Mixed formulation of the semi-discrete problem.} 
The discretization in time of the continuity equation \eqref{semidiscreteProb}  with the implicit Euler method yields for each $j \in \{1, . . . ,J\}$. Find 
  $(\m^j, u^j)  \in V\times Q$ such that
\beq\label{w1}
\begin{aligned}
\displaystyle \intd{ \big(\sum_{i=0}^N a_i^j|\m^j|^{\alpha_i}\big)\m^j ,\vv } - \intd{  u^j,\nabla\cdot \vv} =-\langle   u^j_b, \vv\cdot \nu  \rangle   \quad \text{ for all } \vv\in V,\\
\displaystyle \intd{ \phi \frac{(u^j)^\lambda }{h},q}+\intd{\nabla \cdot\m^j, q} =  \intd{ f^j, q }+\intd{ \phi\frac{  (u^{j-1})^{\lambda}}{h},q }  \quad \text{ for all } q\in Q,
\end{aligned}
\eeq
with $u^0 = u_0(x).$
Using $a$ and $b$ are defined in Section~\ref{StatProb}, we rewrite the mixed formulation \eqref{w1} in the following way: Find $(\m^j,  u^j)\in V \times Q$, such that
\beq\label{eqatstepj}
\begin{aligned}
a(\m^j, \vv) - b(\vv,  u^j) = -\langle   u^j_b, \vv\cdot \nu  \rangle \quad \text{ for all } \vv\in V,\\
\intd{ \phi\frac{ (u^j)^\lambda }{h},q} + b(\m^j,q) =\intd { \bar f^j ,q } \quad\text{ for all } q\in Q, 
\end{aligned}
\eeq
where 
$\bar f^j = f^j +\frac{\phi}{h}  (u^{j-1})^{\lambda}.$

The remainder of this section we restrict our considerations to the problem \eqref{eqatstepj} for a fixed time step $j$. For simplicity, we omit the superscript $j$.
\subsection{Regularization of the semi-discrete problem} 
We use the technique of regularization again.  For the fixed $\varep >$ 0, we consider the following regularized problem. Find $(\m_\varep,  u_\varep) \in V \times Q $ such that
\beq\label{reg-eqatstepj}
\begin{aligned}
a(\m_\varep, \vv)+\varep(|\nabla\cdot \m_\varep|^{ r^*-2} \nabla\cdot \m_\varep ,\nabla\cdot \vv) - b(\vv,  u_\varep) = -\langle   u_b, \vv\cdot \nu  \rangle&\quad  \text{ for all } \vv\in V,\\
\intd{ \frac{\phi}{h} u_\varep^\lambda,q}+ b(\m_\varep,q) =\intd { \bar f ,q }&\quad \text{ for all } q\in Q.
\end{aligned}
\eeq
The following result may be proved in much the same manner as Lemma~\ref{StatSol}. 
%===================================
\begin{lemma}
For every $\varep$, there exists a unique solution $(\m_\varep,  u_\varep) \in V \times Q$ of the regularized semidiscrete problem \eqref{reg-eqatstepj}.
\end{lemma}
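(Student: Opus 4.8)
The plan is to mimic the proof of Lemma~\ref{StatSol} almost verbatim, since the regularized semi-discrete problem \eqref{reg-eqatstepj} differs from the regularized stationary problem \eqref{reg-prob} only in the lower-order zeroth-order term: $\varep(u_\varep^\lambda,q)$ is replaced by $(\tfrac{\phi}{h}u_\varep^\lambda,q)$, and the right-hand side $(f,q)$ by $(\bar f,q)$. Since $\tfrac{\phi}{h}$ is bounded between $\tfrac{\underline\phi}{h}>0$ and $\tfrac{\overline\phi}{h}<\infty$, this change is harmless: the new term has exactly the same structural role as the old $\varep(\cdot)^\lambda$ term and contributes positively to monotonicity and coercivity. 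First I would add the left-hand sides of \eqref{reg-eqatstepj} to define a nonlinear form $a_\varep((\m_\varep,u_\varep),(\vv,q))$ on $V\times Q$ and the associated operator $\mathcal A_\varep: V\times Q\to(V\times Q)'$, and then verify that $\mathcal A_\varep$ is continuous, coercive, and strictly monotone; the conclusion then follows from the Browder--Minty theorem (\cite{zeidler1989}, Thm.~26.A) applied with the linear functional $\tilde f(\vv,q):=-\langle u_b,\vv\cdot\nu\rangle+(\bar f,q)\in(V\times Q)'$, which is well defined because $\bar f=f+\tfrac{\phi}{h}(u^{j-1})^\lambda\in L^{r^*}(\Omega)$ by the assumptions on $f^j$, $\phi$, and $u^{j-1}\in Q$ together with \eqref{H-cont}.

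For continuity, the estimates are identical to those in Lemma~\ref{StatSol}: I would bound $a(\uu_1,\vv)-a(\uu_2,\vv)$ using \eqref{F-cont} and H\"older, bound the $\varep$-regularization term $\varep(|\nabla\cdot\uu_1|^{r^*-2}\nabla\cdot\uu_1-|\nabla\cdot\uu_2|^{r^*-2}\nabla\cdot\uu_2,\nabla\cdot\vv)$ the same way, bound the $b(\cdot,\cdot)$ terms by Cauchy--Schwarz in the natural norms, and bound the new term $(\tfrac{\phi}{h}(p_1^\lambda-p_2^\lambda),q)\le\tfrac{\overline\phi}{h}\|p_1-p_2\|_{L^r}^\lambda\|q\|_{L^r}$ using \eqref{H-cont}; collecting terms gives exactly the same type of local boundedness (hence demicontinuity, which suffices) as before. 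For coercivity, testing with $(\uu,p)$ and using $a(\uu,\uu)\ge C(\|\uu\|_{L^2}^2+\|\uu\|_{L^s}^s)$ from \eqref{F-mono}, the identity $-b(\vv,u)+b(\m,q)$ cancels on the diagonal, the $\varep$-term contributes $\varep\|\nabla\cdot\uu\|_{L^{r^*}}^{r^*}\ge 0$, and the new term contributes $(\tfrac{\phi}{h}p^\lambda,p)\ge\tfrac{\underline\phi}{h}\|p\|_{L^{r}}^{r}$ by \eqref{ineqa} with $b=0$ (equivalently $(\tfrac{\phi}{h}p^\lambda,p)=\tfrac1h\int\phi|p|^{1+\lambda}=\tfrac1h\int\phi|p|^r$); dividing by $\|\uu\|_V+\|p\|_Q$ and letting the norm tend to infinity gives the required coercivity, just as in \eqref{bound-mvarep}-type manipulations. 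For strict monotonicity, subtracting the forms at $(\uu,p)$ and $(\vv,q)$, the bilinear $b$ terms cancel, \eqref{F-mono} gives $a(\uu,\uu-\vv)-a(\vv,\uu-\vv)\ge C_2(\|\uu-\vv\|_{L^2}^2+\|\uu-\vv\|_{L^s}^s)$, the $\varep$-term is nonnegative by monotonicity of $\xi\mapsto|\xi|^{r^*-2}\xi$, and the new term is bounded below by $\tfrac{\underline\phi}{h}\tfrac{|p-q|^2}{|p|^{1-\lambda}+|q|^{1-\lambda}}\ge 0$ via \eqref{Mono}, so the total is strictly positive whenever $(\uu,p)\neq(\vv,q)$ (strict positivity in the $\uu$-component when $\uu\neq\vv$, and then forced in $p$ by $\mathcal A_\varep$ being injective through the $b$-coupling and Lemma~\ref{dualnorm}).

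The only point requiring a moment's care — and the one I would flag as the ``main obstacle,'' though it is minor — is the precise sense of continuity needed for Browder--Minty: since $\xi\mapsto|\xi|^{r^*-2}\xi$ and $\xi\mapsto\xi^\lambda$ are merely continuous (not Lipschitz) Nemytskii maps, one should verify demicontinuity (continuity from the strong topology of $V\times Q$ to the weak-$*$ topology of $(V\times Q)'$) rather than norm-to-norm continuity; this follows by the standard argument (convergence of a sequence forces a.e.\ convergence of a subsequence, dominated convergence gives the limit, and a subsequence argument removes the passage to subsequences). Since the structure is word-for-word that of Lemma~\ref{StatSol} with the harmless substitution $\varep\leftrightarrow\tfrac\phi h$ in one lower-order term, I would simply write: ``The proof is a verbatim repetition of that of Lemma~\ref{StatSol}, with the term $\varep(u_\varep^\lambda,q)$ replaced by $(\tfrac{\phi}{h}u_\varep^\lambda,q)$ and $(f,q)$ by $(\bar f,q)$; the coercivity and monotonicity contributions of this term follow from \eqref{ineqa} and \eqref{Mono} exactly as before, using $0<\underline\phi\le\phi\le\overline\phi$,'' and leave the details to the reader.
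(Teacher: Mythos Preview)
Your proposal is correct and follows exactly the approach the paper takes: the paper itself gives no detailed proof for this lemma and simply states that it ``may be proved in much the same manner as Lemma~\ref{StatSol},'' which is precisely what you do, identifying the harmless substitution $\varep(u_\varep^\lambda,q)\rightsquigarrow(\tfrac{\phi}{h}u_\varep^\lambda,q)$ and $(f,q)\rightsquigarrow(\bar f,q)$. Your remark on demicontinuity is a legitimate refinement, and your slightly circuitous phrasing of strict monotonicity in the $p$-variable can be streamlined by noting directly from \eqref{Mono} that $(\tfrac{\phi}{h}(p^\lambda-q^\lambda),p-q)=0$ forces $p=q$ a.e., without invoking the $b$-coupling or Lemma~\ref{dualnorm}.
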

%===================================

Next, we show that the solution $(\m_\varep,  u_\varep)$ of \eqref{reg-eqatstepj} is bounded independently of $\varep$.
%===================================

\begin{lemma}\label{boundedness-discrete-sol}
There exists $\mathcal C>0$ independent of $\varep$ such that for sufficiently small $\varep>0$ the solution $(\m_\varep, u_\varep)$ of \eqref{reg-eqatstepj} satisfies 
\beq\label{discrete-bound}
\norm{ u_\varep}_Q+ \norm{\m_\varep}_V\le \mathcal C .
\eeq
\end{lemma}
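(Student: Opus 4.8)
## Proof proposal for Lemma~\ref{boundedness-discrete-sol}

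The plan is to imitate the $\varep$-uniform estimates carried out for the stationary regularized problem in Lemma~\ref{stationary-sol-indep-eps}, being careful about the new lower-order term $\frac{\phi}{h} u_\varep^\lambda$ and the modified right-hand side $\bar f = f + \frac{\phi}{h}(u^{j-1})^\lambda$. Since the time step $h$ is fixed throughout this section, all constants are allowed to depend on $h$, $\underline\phi$, $\overline\phi$, and on the previous-step data $u^{j-1}$; the only thing that must be uniform is the dependence on $\varep$.

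\textbf{Step 1 (bound on $\nabla\cdot\m_\varep$).} I would test the second equation of \eqref{reg-eqatstepj} with $q = {\rm sgn}(\nabla\cdot\m_\varep)\,|\nabla\cdot\m_\varep|^{r^*-1}$. This gives
\beqs
\norm{\nabla\cdot\m_\varep}_{L^{r^*}}^{r^*}
= \intd{\bar f, q} - \intd{\tfrac{\phi}{h}u_\varep^\lambda, q}
\le \left(\norm{\bar f}_{L^{r^*}} + \tfrac{\overline\phi}{h}\norm{u_\varep}_{L^r}^\lambda\right)\norm{\nabla\cdot\m_\varep}_{L^{r^*}}^{r^*-1},
\eeqs
where I used $\norm{u_\varep^\lambda}_{L^{r^*}} = \norm{u_\varep}_{L^{\lambda r^*}}^\lambda = \norm{u_\varep}_{L^r}^\lambda$ since $\lambda r^* = r$. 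Hence $\norm{\nabla\cdot\m_\varep}_{L^{r^*}} \le \norm{\bar f}_{L^{r^*}} + C\norm{u_\varep}_Q^\lambda$, the exact analogue of \eqref{bound-divm}.

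\textbf{Step 2 (energy estimate).} Take $(\vv,q) = (\m_\varep, u_\varep)$ in \eqref{reg-eqatstepj} and add. The key sign observation is that the new term $\intd{\frac{\phi}{h}u_\varep^\lambda, u_\varep}$ is \emph{nonnegative} (in fact $\ge \frac{\underline\phi}{h}\frac{1}{r^*}\norm{u_\varep}_{L^r}^r \ge 0$ by \eqref{ineqa} with $b=0$), so it only helps; combined with the regularization term $\varep\norm{\nabla\cdot\m_\varep}_{L^{r^*}}^{r^*}\ge 0$, the left side is bounded below by $a(\m_\varep,\m_\varep) \ge C(\norm{\m_\varep}_{L^s}^s + \norm{\m_\varep}_{L^2}^2)$ using \eqref{F-mono}. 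The right side, after Green's formula, is $-\intd{\nabla\cdot\m_\varep, u_b} - \intd{\nabla u_b, \m_\varep} + \intd{\bar f, u_\varep}$, bounded by $\norm{u_b}_{V'}(\norm{\m_\varep}_{L^s} + \norm{\nabla\cdot\m_\varep}_{L^{r^*}}) + \norm{\bar f}_{L^{r^*}}\norm{u_\varep}_Q$, exactly as in \eqref{estRHS1}--\eqref{bound-mvarep}. Substituting the Step 1 bound for $\norm{\nabla\cdot\m_\varep}_{L^{r^*}}$ yields the analogue of \eqref{bound-mvarep}.

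\textbf{Step 3 (bound on $u_\varep$ via inf-sup).} Apply Lemma~\ref{dualnorm}: $C_*\norm{u_\varep}_Q \le \sup_{\vv\in V} b(\vv,u_\varep)/\norm{\vv}_V$, and from the first equation of \eqref{reg-eqatstepj}, $b(\vv,u_\varep) = a(\m_\varep,\vv) + \varep(|\nabla\cdot\m_\varep|^{r^*-2}\nabla\cdot\m_\varep,\nabla\cdot\vv) + \langle u_b,\vv\cdot\nu\rangle$. Using \eqref{F-cont}, this is $\le C(\norm{\m_\varep}_{L^s} + \norm{\m_\varep}_{L^s}^{s-1})\norm{\vv}_{L^s} + \varep\norm{\nabla\cdot\m_\varep}_{L^{r^*}}^{r^*-1}\norm{\nabla\cdot\vv}_{L^{r^*}} + \norm{u_b}_{V'}\norm{\vv}_V$. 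Inserting the Step 1 bound $\norm{\nabla\cdot\m_\varep}_{L^{r^*}} \le \norm{\bar f}_{L^{r^*}} + C\norm{u_\varep}_Q^\lambda$ and using $(\lambda)(r^*-1) = \lambda/\lambda = 1$, the term $\varep\norm{\nabla\cdot\m_\varep}_{L^{r^*}}^{r^*-1}$ contributes an $\varep^{2}$-order coefficient times $\norm{u_\varep}_Q$, which for $\varep$ small enough is absorbed into the left side, giving the analogue of \eqref{bound-rhovarep}: $\norm{u_\varep}_Q \le C(\norm{\m_\varep}_{L^s} + \norm{\m_\varep}_{L^s}^{s-1} + \norm{\bar f}_{L^{r^*}}^{r^*-1} + \norm{u_b}_{V'})$.

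\textbf{Step 4 (closing the loop).} Substitute the Step 3 bound for $\norm{u_\varep}_Q$ into the Step 2 estimate and apply Young's inequality to absorb the sublinear powers of $\norm{\m_\varep}_{L^s}$, obtaining $\norm{\m_\varep}_{L^s}^s \le C$ with $C$ depending only on $\norm{\bar f}_{L^{r^*}}$, $\norm{u_b}_{V'}$, $\underline\phi$, $\overline\phi$, $h$ (hence, through $\bar f$, on $u^{j-1}$) — and \emph{not} on $\varep$. Feeding this back gives $\norm{u_\varep}_Q \le C$ and then $\norm{\nabla\cdot\m_\varep}_{L^{r^*}} \le C$, so $\norm{\m_\varep}_V \le C$, which is \eqref{discrete-bound}.

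The main obstacle, though a mild one, is bookkeeping the $\varep$-powers in Step 3: one must check that after substituting the Step 1 bound, every occurrence of $\varep$ multiplying $\norm{u_\varep}_Q$ (or a power $\le 1$ of it) carries a positive power of $\varep$, so that for $\varep$ below an explicit threshold these terms can be absorbed into $C_*\norm{u_\varep}_Q$ on the left. This works precisely because $\lambda(r^*-1)=1$ and $\lambda < 1$, so $\varep\cdot\varepsilon^{\lambda(r^*-1)}$-type products are genuinely higher order; the new term $\frac{\phi}{h}u_\varep^\lambda$ introduces no difficulty since its pairing with $u_\varep$ has a favorable sign. Everything else is a line-by-line transcription of the proof of Lemma~\ref{stationary-sol-indep-eps} with $f$ replaced by $\bar f$ and the harmless addition of the coercive term $\frac{\underline\phi}{h r^*}\norm{u_\varep}_{L^r}^r$ on the left.
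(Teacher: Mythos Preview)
Your approach is correct and closely mirrors the stationary argument of Lemma~\ref{stationary-sol-indep-eps}, but the paper takes a shorter route that is worth noting. The key difference is in how $\norm{u_\varep}_Q$ is controlled. You treat the new term $(\tfrac{\phi}{h}u_\varep^\lambda,u_\varep)$ merely as nonnegative in Step~2 and then recover $\norm{u_\varep}_Q$ via the inf-sup condition (Lemma~\ref{dualnorm}) in Step~3, finally closing the loop in Step~4. The paper instead \emph{keeps} this term on the left of the energy estimate as the coercive contribution $\tfrac{\underline\phi}{h}\norm{u_\varep}_Q^{r}$; after inserting the Step~1 bound for $\norm{\nabla\cdot\m_\varep}_{L^{r^*}}$ and a single application of Young's inequality, both $\norm{\m_\varep}_{L^s}$ and $\norm{u_\varep}_Q$ are bounded simultaneously, and the inf-sup step is bypassed entirely. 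This buys a noticeably shorter proof and, incidentally, makes the smallness of $\varep$ irrelevant to the argument.

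One small bookkeeping slip in your Step~3: in the semi-discrete problem the coefficient in front of $u_\varep^\lambda$ is $\tfrac{\phi}{h}$, not $\varep$, so the Step~1 bound reads $\norm{\nabla\cdot\m_\varep}_{L^{r^*}}\le \norm{\bar f}_{L^{r^*}}+\tfrac{\bar\phi}{h}\norm{u_\varep}_Q^\lambda$ and, after substitution, the term multiplying $\norm{u_\varep}_Q$ carries only a \emph{single} power of $\varep$ (not $\varep^{2}$ as in the stationary case). Since $\lambda(r^*-1)=1$ this is still $C\varep\,\norm{u_\varep}_Q$, which is absorbable into $C_*\norm{u_\varep}_Q$ for $\varep$ small, so your conclusion stands.
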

%===================================
\begin{proof} 
As in the proof of Lemma~\ref{stationary-sol-indep-eps}, we begin with an estimate for the norm of $\nabla\cdot\m_\varep$.
Using the second equation of \eqref{reg-eqatstepj} with $q= {\rm sgn}(\nabla\cdot \m_\varep) |\nabla\cdot \m_\varep|^{{ r^* - 1}}$, we obtain
\beq\label{divm-eps}
\norm{\nabla\cdot \m_\varep}_{L^{ r^*}} \le  \norm{\bar f}_{L^{ r^*}} + \frac{\bar\phi}{h} \norm{ u_\varep}_Q^\lambda.
\eeq
The estimation of $\norm{\m_\varep}_{L^s}$ is based on choosing the test function $(\vv,q)=(\m_\varep,  u_\varep)$ in \eqref{reg-eqatstepj}. Then we obtain the estimate 
\begin{multline}\label{asd}
a(\m_\varep, \m_\varep)+\varep\intd{|\nablad\m_\varep|^{ r^*-2} \nablad\m_\varep,\nablad\m_\varep}+\intd{ \phi\frac{u_\varep^\lambda }{h}, u_\varep}
\le \norm{ u_b}_{V'} \left(\norm{\m_\varep}_{L^s} + \norm{\nabla \cdot \m_\varep}_{L^{ r^*}} \right)  +\norm{\bar f}_{L^{ r^*}}\norm{ u_\varep}_Q.
\end{multline}
Thanks to the monotonicity of the function $F(\cdot)$ and \eqref{divm-eps}, it follows from \eqref{asd}  that
\beqs
C(\norm{\m_\varep}_{L^s}^{s}+ \norm{\m_\varep}^{2})+ \varep \norm{ \nabla\cdot \m_\varep}^{ r^*}_{L^{ r^*}} +\frac{\underline \phi}{h}\norm{ u_\varep}_Q^{ r} 
\le  \norm{ u_b}_{V'} \left(\norm{\m_\varep}_{L^s} + \norm{\bar f}_{L^{ r^*}} + \frac{\bar\phi}{h} \norm{ u_\varep}_Q^\lambda\right)  +\norm{\bar f}_{L^{ r^*}}\norm{ u_\varep}_{Q}. 
\eeqs
This and Young's inequality show that 
 \beqs
\norm{\m_\varep}_{L^s}^{s}+\frac{\underline \phi}{2h}\norm{ u_\varep}_Q^{ r}
\le  C \left(\norm{ u_b}_{V'}^{s^*} + \norm{ u_b}_{V'}\norm{\bar f}_{L^{ r^*}}+ \norm{ u_b}_{V'}^{ r} +\norm{\bar f}_{L^{ r^*}}^{ r^*}\right), 
\eeqs
which leads to  
\beq
\label{m-eps}
\norm{\m_\varep}_{L^s}\le C \left( \norm{ u_b}_{V'}^{s^*} + \norm{ u_b}_{V'}^{ r}  +\norm{\bar f}_{L^{ r^*}}^{ r^*}\right)^{1/s}, 
\quad \norm{ u_\varep}_Q\le C\left( \norm{ u_b}_{V'}^{s^*} + \norm{ u_b}_{V'}^{ r}  +\norm{\bar f}_{L^{ r^*}}^{ r^*}\right)^{1/ r}.
\eeq
Substituting \eqref{m-eps} into \eqref{divm-eps} we can assert that
\beq\label{new-divm-eps}
\begin{aligned}
\norm{\nabla\cdot \m_\varep}_{L^{ r^*}} 
&\le \norm{\bar f}_{L^{ r^*}}+ C\left( \norm{ u_b}_{V'}^{s^*} + \norm{ u_b}_{V'}^{ r}  +\norm{\bar f}_{L^{ r^*}}^{ r^*}\right)^{\lambda/ r} \\
&\le C \left(1+ \norm{ u_b}_{V'}^{s^*/ r^*} + \norm{ u_b}_{V'}^\lambda  +\norm{\bar f}_{L^{ r^*}}\right).
\end{aligned}
\eeq 
The assertion \eqref{discrete-bound} follows directly from \eqref{m-eps}--\eqref{new-divm-eps}.  
\end{proof}
%===================================

\subsection{Solvability of the semi-discrete problem}
In the same manner as in Section 1, we pass the limit $\varep\to 0$ and obtain
the existence of a solution of the semi-discrete problem \eqref{w1}. %The proof of the uniqueness differs from the proof of Theorem~\ref{SolofStationaryProb}.
\begin{theorem}\label{Sol-semidiscreteProb}
The mixed formulation \eqref{w1} of the semi-discrete problem \eqref{semidiscreteProb} possesses a unique solution $(\m, u)\in W(\rm{div};\Omega) \times L^{ r}(\Omega)$.
\end{theorem}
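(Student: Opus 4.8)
The plan is to reproduce, for the regularized family \eqref{reg-eqatstepj}, the three-stage argument already carried out for the stationary problem in Theorem~\ref{SolofStationaryProb}. The two nontrivial ingredients are now in hand: unique solvability of \eqref{reg-eqatstepj} for each fixed $\varep>0$, and the $\varep$-uniform bound $\norm{u_\varep}_Q+\norm{\m_\varep}_V\le\mathcal C$ from Lemma~\ref{boundedness-discrete-sol}. (Here $\bar f=f+\tfrac{\phi}{h}(u^{j-1})^\lambda\in L^{r^*}(\Omega)$: since $u^{j-1}\in L^{r}(\Omega)$ — by the previous time step, or by the hypothesis $u_0\in L^{r}(\Omega)$ when $j=1$ — one has $\norm{(u^{j-1})^\lambda}_{L^{r^*}}^{r^*}=\norm{u^{j-1}}_{L^{r}}^{r}<\infty$, so the scheme is well posed step by step.) The only genuinely new work is the passage $\varep\to0$ and uniqueness.

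First I would set $\varep=1/n$ and let $(\m_n,u_n)$ be the solution of \eqref{reg-eqatstepj}. By Lemma~\ref{boundedness-discrete-sol} it is bounded in the reflexive space $V\times Q$, so a subsequence (not relabelled) converges weakly, $(\m_n,u_n)\rightharpoonup(\m,u)\in V\times Q$. Introduce the nonlinear operator $\mathcal A:V\times Q\to(V\times Q)'$ associated with \eqref{eqatstepj},
\[
\intb{\mathcal A(\uu,p),(\vv,q)}_{(V\times Q)'\times(V\times Q)}:=a(\uu,\vv)-b(\vv,p)+\intd{\tfrac{\phi}{h}p^\lambda,q}+b(\uu,q),
\]
and the functional $\tilde f(\vv,q):=-\intb{u_b,\vv\cdot\nu}+\intd{\bar f,q}$. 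Then \eqref{reg-eqatstepj} says exactly that $\intb{\mathcal A(\m_n,u_n)-\tilde f,(\vv,q)}=-\tfrac1n\intd{|\nabla\cdot\m_n|^{r^*-2}\nabla\cdot\m_n,\nabla\cdot\vv}$, and by H\"older's inequality together with the $\varep$-uniform bound on $\norm{\nabla\cdot\m_n}_{L^{r^*}}$ this is $\le\tfrac{C}{n}\norm{\nabla\cdot\m_n}_{L^{r^*}}^{r^*-1}\norm{(\vv,q)}_{V\times Q}\to0$. Hence $\mathcal A(\m_n,u_n)\to\tilde f$ strongly in $(V\times Q)'$.

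Next, as in Step~3 of the proof of Theorem~\ref{SolofStationaryProb}, I would pass to the limit by monotonicity. The operator $\mathcal A$ is bounded and hemicontinuous (each piece depends continuously on its argument, by \eqref{F-cont} and \eqref{H-cont}), and it is monotone: in $\intb{\mathcal A(\uu_1,p_1)-\mathcal A(\uu_2,p_2),(\uu_1-\uu_2,p_1-p_2)}$ the two $b$-terms cancel, the $a$-term is $\ge C_2(\norm{\uu_1-\uu_2}_{L^2}^2+\norm{\uu_1-\uu_2}_{L^s}^s)$ by \eqref{F-mono}, and the $\tfrac{\phi}{h}p^\lambda$-term is nonnegative because $t\mapsto t^\lambda$ is nondecreasing, with the quantitative lower bound \eqref{Mono}. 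Since $(\m_n,u_n)\rightharpoonup(\m,u)$ while $\mathcal A(\m_n,u_n)\to\tilde f$ strongly, $\intb{\mathcal A(\m_n,u_n),(\m_n,u_n)}\to\intb{\tilde f,(\m,u)}$, so Minty's argument (e.g.\ \cite{z90}, p.~474) yields $\mathcal A(\m,u)=\tilde f$; i.e.\ $(\m,u)$ solves \eqref{eqatstepj}, hence \eqref{w1}.

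Finally, for uniqueness take two solutions $(\m_1,u_1),(\m_2,u_2)$, subtract the two equations of \eqref{eqatstepj}, and test with $(\m_1-\m_2,u_1-u_2)$: the $b$-terms cancel and \eqref{F-mono} forces $C_2(\norm{\m_1-\m_2}_{L^2}^2+\norm{\m_1-\m_2}_{L^s}^s)\le0$, so $\m_1=\m_2$; then the first equation reads $b(\vv,u_1-u_2)=0$ for all $\vv\in V$ and Lemma~\ref{dualnorm} gives $u_1=u_2$ (equivalently, test the second equation with $u_1-u_2$ and use \eqref{Mono} with $\underline{\phi}>0$). I expect the step where the weak limit is shown to satisfy the nonlinear equation — the monotonicity/Minty argument — to be the only delicate point; the rest is a verbatim rerun of the stationary analysis, the extra zeroth-order term $\tfrac{\phi}{h}p^\lambda$ being harmless precisely because it is itself monotone.
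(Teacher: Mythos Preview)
Your proposal is correct and follows essentially the same approach as the paper: regularize, use the $\varep$-uniform bound from Lemma~\ref{boundedness-discrete-sol} to extract a weakly convergent subsequence, show $\mathcal A(\m_n,u_n)\to\tilde f$ strongly in $(V\times Q)'$ (here only the single residual term $\tfrac1n\intd{|\nabla\cdot\m_n|^{r^*-2}\nabla\cdot\m_n,\nabla\cdot\vv}$ remains, since the $\tfrac{\phi}{h}u^\lambda$ term is already part of the unregularized operator), and then invoke Minty's lemma. For uniqueness the paper uses \eqref{Mono} directly to conclude $u_1=u_2$ from the same summed identity, rather than your inf--sup route via Lemma~\ref{dualnorm}; both arguments are valid and you note the \eqref{Mono} alternative yourself.
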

\begin{proof}
Analysis similar to that in the proof of Theorem \ref{SolofStationaryProb}, we add two equations in \eqref{eqatstepj} and obtain the nonlinear form $a$, defined on $(V \times Q) \times (V \times Q)'$, and the linear form $\tilde f \in (V \times Q)'$, defined by
\beqs
a( (\m, u),(\vv,q) )\eqdef a(\m,\vv)- b(\vv, u)+\intd{ \frac{\phi u }{h},q} +b(\m,q),\quad
\tilde f(\vv,q) \eqdef - \intb{ u_b,\vv\cdot\nu} +\intd{\bar f,q}. 
\eeqs
Again, the operator $\mathcal A : V \times Q \to (V \times Q)'$ is defined by $$\intb{\mathcal A(\uu, p), (\vv, q)}_{(V \times Q)'\times (V \times Q)} = a((\uu, p), (\vv, q)).$$ Choosing $\varep = 1/n$, we obtain a sequence of unique solutions $(\m_n,   u_n)$ of the regularized problems \eqref{reg-eqatstepj}. Owing to Lemma~\ref{boundedness-discrete-sol} the sequence $((\m_n,   u_n))_{n\in \N}$ is bounded in $V \times Q$. Hence there is a weakly convergent
subsequence, again denoted by $((\m_n,   u_n))_{n\in \N}$, which converges to $(\m, \rho) \in V \times Q$. In the same manner as in the proof of Theorem~\ref{SolofStationaryProb} we obtain the identity $\mathcal A(\m, \rho) = \tilde f$ in $(V \times Q)'$, i.e., $(\m, \rho)$ is a solution of the semi-discrete mixed formulation \eqref{w1}.
 
To show the uniqueness, we consider two solutions $(\m_1,   u_1)$ and  $(\m_2,  u_2)$ of \eqref{eqatstepj}. Using the test function $(\vv,q)=(\m_1-\m_2, u_1-  u_2)$, we obtain 
\begin{align*}
%\begin{aligned}
a(\m_1,\m_1-\m_2)-a(\m_2,\m_1-\m_2)  - b(\m_1-\m_2,  u_1-  u_2)= 0, \\
\intd{\phi\frac{   u_1^\lambda -  u_2^\lambda}{h},   u_1-  u_2} + b(\m_1-\m_2,   u_1-  u_2)=0.
%\end{aligned}
\end{align*}
Adding the two equations then using \eqref{F-mono} and \eqref{Mono}   yields    
\beqs
\begin{split}
0&=a(\m_1,\m_1-\m_2)-a(\m_2,\m_1-\m_2) + \intd{\phi \frac{u_1^\lambda -  u_2^\lambda}{h},   u_1-  u_2}\\
&\ge C_2 \left(\norm{ \m_1-\m_2}^2+\norm{ \m_1-\m_2}_{L^s}^s  \right) + \int_\Omega \frac{\phi}{h} \frac{ |u_1-  u_2|^2}{|u_1|^{1-\lambda}  + |u_2|^{1-\lambda} }dx,
\end{split} 
\eeqs 
which proves $\m_1=\m_2$ and $  u_1=  u_2$ a.e. 
\end{proof}
%====================================

\section {The transient problem}\label{TransProb}
We address the continuous transient problem. Due to the lack of regularity of the solution $\m$, it is impossible to handle more general boundary conditions as in the previous sections.  We will restrict our considerations here to the case of homogeneous Dirichlet boundary conditions
\beq\label{TProb}
\begin{aligned}
\left(\sum_{i=0}^N a_i(x,t)|\m(x,t)|^{\alpha_i}\right) \m(x,t) = -\nabla   u(x,t) & & \quad (x,t)\in\Omega\times (0,T) , \\
\phi(x) \partial_t u^\lambda(x,t) + \nablad\m(x,t)=f(x,t) & & \quad(x,t)\in\Omega\times (0,T),\\
  u(x,t) =0 & & \quad (x,t)\in\partial\Omega \times (0,T) ,\\
u(x,0)=u_0(x) & & \quad x\in\Omega.
\end{aligned}
\eeq
From now on the following assumptions will be needed
 \begin{itemize}
 \item[(H1)] $0<\underline{\phi}\le \phi(x) \le \overline\phi<\infty;
 f \in L^\infty (0,T;L^{ r^*}(\Omega) );
u_0\in W_0^{1,s^*}(\Omega)\cap L^{ r}(\Omega);  
a_i(\cdot,t)\in L^\infty(\Omega), i=0,\ldots,N.
$
\item [(H2)] The coefficient functions and $\|f\|$ to be Lipschitz continuous in time, i.e., there exists a constant $L$ such that, for every $0 \le t_1 \le t_2 \le T,$
\beqs
\norm{a_i(t_1)-a_i(t_2)}_{L^\infty}\le L |t_1-t_2|, \quad \text { and }\quad \norm{f(t_1)-f(t_2)}_{L^{ r^*}}\le L|t_1-t_2|. 
\eeqs

\item[(H3)] The degree of Forchheimer polynomial $F$ satisfies $\alpha_N={\rm deg} (F)\le \gamma $. It equivalents to  $ r\le s^*$.

\end{itemize}

\subsection{A priori estimates for the solutions of the semi-discrete problems}
As mentioned above we use the technique of semi-discretization in time  (see in \cite{raviart69})  to show the existence of solutions of the transient problem~\eqref{TProb}. The existence and uniqueness of the solutions to the semi-discrete problems has been established in Section~\ref{SemiProb}. In the next step, we consider the limit $h \to 0$. Similar to the regularized technique employed in the last two sections, we derive a priori estimates for the solutions of the semi-discrete problems, which are independent of $h.$ 

We investigate the semi-discrete problem~\eqref{w1} for homogeneous Dirichlet boundary condition. In this case problem~\eqref{w1} can read as the follow. Find $(\m^j,  u^j)\in W({\rm div}, \Omega) \times L^{ r}(\Omega) \equiv V\times Q$, such that
\beq\label{semidiscrete-prob}
\begin{aligned}
a(\m^j, \vv) - b(\vv,  u^j) = 0 \quad\text{ for all } \vv\in V,\\
\intd{\phi\frac{ (u^j)^\lambda -  (u^{j-1})^{\lambda} }{h}, q} + b(\m^j,q) =\intd{f^j, q}  \quad \text{ for all } q\in Q. 
\end{aligned}
\eeq
\begin{lemma}\label{boundedness1} For sufficiently small $h< 2^{-1} \underline{\phi}\lambda$, there exists $\mathcal C>0$ independent of $h$ and $J$ such that
\beq\label{indep1}
\norm{ u^j}_{L^{ r}} + \norm{ (u^j)^\lambda}_{L^{ r^*}} +   \norm{\m^j}+\norm{\m^j}_{L^s}\le \mathcal C\quad  \text{ for all }    j =1,2,\ldots, J.
\eeq
 \end{lemma}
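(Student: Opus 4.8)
The plan is to derive the a priori bound \eqref{indep1} by testing the semi-discrete system \eqref{semidiscrete-prob} at each level $j$ with the natural pair $(\vv,q)=(\m^j,u^j)$, summing over the time levels, and exploiting the monotonicity properties \eqref{F-mono} and \eqref{ineqa}. First I would take $\vv=\m^j$ in the first equation and $q=u^j$ in the second equation, add them so that the $b$-terms cancel, and obtain an identity of the form
\beqs
a(\m^j,\m^j) + \intd{\phi\frac{(u^j)^\lambda-(u^{j-1})^\lambda}{h},u^j} = \intd{f^j,u^j}.
\eeqs
For the coercive part I would invoke $a(\m^j,\m^j)\ge C(\norm{\m^j}^2_{L^2}+\norm{\m^j}^s_{L^s})$, which follows from \eqref{F-mono} with $y=0$. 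For the time-difference term, the crucial ingredient is the convexity-type inequality \eqref{ineqa}, which gives
\beqs
\bigl((u^j)^\lambda-(u^{j-1})^\lambda\bigr)u^j \ge \frac{1}{r^*}\bigl((u^j)^{r}-(u^{j-1})^{r}\bigr),
\eeqs
so that after multiplying by $\phi\ge\underline\phi$ the discrete time-derivative term telescopes when we sum over $j$.

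Next I would multiply the combined identity by $h$ and sum over $j=1,\dots,m$ for any $m\le J$. The telescoping yields $\frac{\underline\phi}{r^*}\norm{u^m}^{r}_{L^{r}}$ on the left (up to the initial contribution $\frac{\overline\phi}{r^*}\norm{u_0}^{r}_{L^{r}}$ carried to the right), together with $h\sum_j C(\norm{\m^j}^2_{L^2}+\norm{\m^j}^s_{L^s})$. On the right side I would estimate $h\sum_j\intd{f^j,u^j}$ using Hölder in space ($L^{r^*}$--$L^{r}$) and then Young's inequality in the exponents $r^*,r$ to absorb the $\norm{u^j}_{L^{r}}$ factor; the remaining sum $h\sum_j\norm{f^j}^{r^*}_{L^{r^*}}\le T\|f\|^{r^*}_{L^\infty(0,T;L^{r^*})}$ is bounded uniformly in $h$ and $J$ by (H1). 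This already gives uniform control of $\norm{u^m}_{L^{r}}$ and of the summed quantities, but to get the pointwise-in-$j$ bound on $\norm{u^j}_{L^{r}}$ I would instead sum only up to the index $m$ realizing the maximum and use the smallness condition $h<2^{-1}\underline\phi\lambda$ to absorb the term $h\cdot\frac{\overline\phi}{h}\norm{u^{m-1}}$-type contribution — more precisely, the Young step produces a factor like $\frac{C h}{\underline\phi}$ which under the stated smallness is $\le\tfrac12$, allowing absorption into the left side. This yields $\norm{u^j}_{L^{r}}\le\mathcal C$ uniformly.

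Once $\norm{u^j}_{L^{r}}$ is bounded, I would recover $\norm{(u^j)^\lambda}_{L^{r^*}}=\norm{u^j}^\lambda_{L^{r}}\le\mathcal C$ directly from the identity $r^*\lambda = r$ (from \eqref{Bdef}), then bound $\norm{\m^j}_{L^2}$ and $\norm{\m^j}_{L^s}$ from the coercivity estimate plus the already-established $u^j$-bound fed back through the first equation, exactly as in the proof of Lemma~\ref{stationary-sol-indep-eps}: namely $b(\vv,u^j)=a(\m^j,\vv)$ with the inf-sup Lemma~\ref{dualnorm} and \eqref{F-cont}. I expect the main obstacle to be the bookkeeping in the summation step: getting a bound on $\norm{u^j}_{L^{r}}$ that is genuinely uniform in both $h$ and $J$ requires carefully tracking the constants through Young's inequality so that the smallness hypothesis $h<2^{-1}\underline\phi\lambda$ is exactly what is needed to close the absorption, rather than a bound that degrades as $J\to\infty$ (i.e., as $T$ is fixed but $h\to0$). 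The telescoping structure from \eqref{ineqa} is what makes this possible, so the delicate point is combining the discrete Gronwall-type argument with that convexity inequality without losing uniformity.
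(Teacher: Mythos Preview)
Your treatment of the $u^j$--bound is essentially the paper's argument: test with $(\m^j,u^j)$, use \eqref{ineqa} to get the telescoping inequality $(\phi,(u^j)^r)-(\phi,(u^{j-1})^r)\le h\|f^j\|_{L^{r^*}}^{r^*}+\frac{hr^*}{\underline\phi\,r}(\phi,(u^j)^r)$, and iterate; the smallness $h<\tfrac12\underline\phi\lambda$ is exactly what makes $(1-\ell h)^{-j}$ uniformly bounded. The identity $\|(u^j)^\lambda\|_{L^{r^*}}=\|u^j\|_{L^r}^\lambda$ then handles the second quantity.

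The genuine gap is in the second half, the pointwise-in-$j$ bound on $\|\m^j\|_{L^2}+\|\m^j\|_{L^s}$. Your proposed route---feed the $u^j$-bound back through the first equation via the inf--sup Lemma~\ref{dualnorm}---goes in the wrong direction: that lemma controls $\|u^j\|_Q$ in terms of $\m^j$, not conversely. The only way to get at $a(\m^j,\m^j)$ from the first equation is $a(\m^j,\m^j)=b(\m^j,u^j)=(\nabla\cdot\m^j,u^j)$, and for $\nabla\cdot\m^j$ the second equation gives $\nabla\cdot\m^j=f^j-\phi h^{-1}[(u^j)^\lambda-(u^{j-1})^\lambda]$, which carries a factor $1/h$ that does not go away with only an $L^{r^*}$ bound on the powers themselves. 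Your summation yields a uniform bound on $h\sum_j a(\m^j,\m^j)$, which is much weaker than the required pointwise bound.

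The paper's argument here is different and more delicate, and it is where hypothesis (H2) enters (your proposal never invokes it, which is a warning sign). One tests the second equation with $q=u^j-u^{j-1}$ and combines with the first equation at levels $j$ and $j-1$ to obtain $a(\m^j,\m^j)-a(\m^{j-1},\m^j)=b(\m^j,u^j-u^{j-1})$; summing in $j$ and applying Young's inequality termwise to $\sum_i a_i^{k-1}|\m^{k-1}|^{\alpha_i}\m^{k-1}\cdot\m^k$ produces a telescoping piece $\sum_i\frac{\alpha_i+1}{\alpha_i+2}\bigl(a_i^k|\m^k|^{\alpha_i+2}-a_i^{k-1}|\m^{k-1}|^{\alpha_i+2}\bigr)$ plus a remainder governed by $\|a_i^k-a_i^{k-1}\|_{L^\infty}$. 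The Lipschitz assumption on the $a_i$ makes this remainder of size $Lh$ per step, hence controllable by the already-established bound on $h\sum_k a(\m^k,\m^k)$; the Lipschitz assumption on $f$ handles the Abel-summation of $\sum_k(f^k,u^k-u^{k-1})$. After the telescoping one is left with $\tfrac12 a_0^j\|\m^j\|_{L^2}^2+\tfrac{s-1}{s}a_N^j\|\m^j\|_{L^s}^s$ on the left, uniformly in $h$ and $J$.
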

\begin{proof}
Choosing $(\vv,q) = (\m^j, u^j)$ in \eqref{semidiscrete-prob} and adding the resulting equations yields
\beq\label{sum2eqs}
a(\m^j,\m^j)+\intd{\phi\frac{ (u^j)^\lambda -  (u^{j-1})^\lambda}{h},  u^j} =\intd{ f^j,  u^j}.
\eeq
From \eqref{ineqa} we have   
\begin{align*}
\intd{\phi\frac{ (u^j)^\lambda -  (u^{j-1})^\lambda }{h},  u^j}
\ge\frac {1}{\Dt r^*}\intd{ \phi,  (u^j)^{ r}- (u^{j-1})^{ r}}.  
\end{align*}
Due to the fact that  $a(\m^j,\m^j)\ge 0 $
and
\begin{align*}
\intd{f^j,  u^j}
  \le \frac {1}{ r^*}\norm{f^j}_{L^{ r^*}}^{ r^*} + \frac {1}{ r}\norm{ u^j}_{L^{ r}}^{ r}\le \frac {1}{ r^*}\norm{f^j}_{L^{ r^*}}^{ r^*} + \frac {1}{\underline{\phi} r}\intd{\phi, (u^j)^{ r}}, 
\end{align*}
it may be concluded that
\beqs
\intd{ \phi,  (u^j)^{ r}} -\intd{\phi, (u^{j-1})^{ r}}
 \le \Dt\norm{f^j}_{L^{ r^*}}^{ r^*} + \frac {\Dt  r^*}{\underline{\phi} r}\intd{\phi, (u^j)^{ r}}. 
\eeqs
If $\Dt$ sufficient small so that $\ell\Dt=\frac{\Dt  r^*}{\underline{\phi} r}=\frac{\Dt }{\underline{\phi}\lambda}<1$, which gives $\Dt<  \underline{\phi}\lambda$,   then 
\beqs
\intd{ \phi,  (u^j)^{ r}} \le \frac{1}{1-\ell\Dt}\left(\intd{\phi, (u^{j-1})^{ r}}+ \Dt\norm{f^j}_{L^{ r^*}}^{ r^*}\right).
\eeqs
By induction we find that
\beqs
\begin{split}
\intd{ \phi,  (u^j)^{ r}} 
&\le (1-\ell\Dt)^{-j}\Big(\intd{\phi, (u^0)^{ r}}+ \sum_{i=1}^{j}(1-\ell\Dt)^{-i+1} \Dt\norm{f^i}_{L^{ r^*}}^{ r^*}\Big).
\end{split}
\eeqs
Note that  $ (1- \ell\Dt)^{-j}\le e^{\frac{\ell T}{1-\ell\Dt}}< e^{2\ell T} $ for all  $\Dt<1/(2\ell) = \frac 1 2 \underline{\phi} \lambda$, it follows from above inequality that
\beq
\underline{\phi} \norm{u^j}_{L^{ r}}^{ r} \le \intd{ \phi,  (u^j)^{ r}}  \le  e^{2\ell T} 
\Big( \overline{\phi}\norm{u^0}_{L^{ r}}^{ r} + T\norm{f}_{L^\infty(0,T;L^{ r^*}) }^{ r^*} \Big).
\eeq
This leads to  
\beq\label{Buj}
\norm{u^j}_{L^{ r}}\le C_1, 
\eeq 
where $C_1=\left(\underline{\phi}^{-1}e^{2\ell T} 
\big( \overline{\phi}\norm{u^0}_{L^{ r}}^{ r} + T\norm{f}_{L^\infty(0,T;L^{ r^*}) }^{ r^*} \big) \right)^{1/ r}$. 
It follows easily that
\beq\label{Bujlambda}
\norm{(u^j)^\lambda}_{L^{ r^*}} =\norm{u^j}_{L^ r}^\lambda \le C_1^{\lambda}. 
\eeq
%========================

Using the test function $q = u^j- u^{j-1}$,  we obtain the from the second equation in  \eqref{semidiscrete-prob} that 
\beq\label{fEq}
\intd{\phi\frac{ (u^j)^\lambda -  (u^{j-1})^\lambda}{h},  u^j -  u^{j-1}}+ b(\m^j,u^j -  u^{j-1} ) =\intd{ f^j,  u^j - u^{j-1}}.
\eeq
Now taking $\vv=\m^j$  at time step $j$ and $j-1$ from the first equation in \eqref{semidiscrete-prob}, we have  
\beqs 
a(\m^j, \m^j) - b(\m^j,  u^j) = 0,\quad  \text{ and }  \quad a(\m^{j-1}, \m^j) - b(\m^j,u^{j-1}) =0, 
\eeqs
which implies that
\beq\label{sEq}
a(\m^j, \m^j)- a(\m^{j-1}, \m^j)= b(\m^j,  u^j-   u^{j-1}).
\eeq
Combining \eqref{fEq} and \eqref{sEq} shows that  
\beqs
\intd{\phi\frac{ (u^j)^\lambda -  (u^{j-1})^\lambda}{h},  u^j -  u^{j-1}} +a(\m^j, \m^j)- a(\m^{j-1}, \m^j) = \intd{ f^j,  u^j - u^{j-1}}.
\eeqs
Summing up this equation for $k=1,2,\ldots, j$ yields   
\beq\label{neweq}
  \sum_{k=1}^j \intd{\phi\frac{ (u^k)^\lambda -  (u^{k-1})^\lambda}{h},  u^k -  u^{k-1}}+ a(\m^k, \m^k)-a(\m^{k-1}, \m^k) = \sum_{k=1}^j \intd{f^k  ,u^k-   u^{k-1}}.
\eeq
 We will estimate \eqref{neweq} term by term.
 
The last term on the right hand side of \eqref{neweq} are bounded by using H\"older's inequality and  \eqref{indep1}   
\beq\label{a2}
\begin{aligned}
\sum_{k=1}^j  \intd{f^k , u^k-   u^{k-1}} &= \intd{f^j , u^j}- \intd{f^1,u^0}+\sum_{k=1}^{j-1} \intd{f^k-f^{k+1}, u^k}\\
&\le \norm{f^j}_{L^{ r^*}}\norm{ u^j}_{L^{ r}} + \norm{f^1}_{L^{ r^*}}\norm{u^0}_{L^{ r}} +\sum_{k=1}^{j-1}\norm{f^k-f^{k+1}}_{L^{ r^*}} \norm{ u^k}_{L^{ r}}\\
&\le  \norm{f}_{L^\infty(0,T;L^{ r^*})}(C_1 +\norm{u^0}_{L^{ r}}) +C_1 L T
\le  (C_1+1) \left( \norm{f}_{L^\infty(0,T;L^{ r^*})}+ \norm{u^0}_{L^{ r}}+LT\right).  
\end{aligned}
\eeq
For the last two terms on the left hand side of \eqref{neweq}, we rewrite as 
\beq\label{twolastsum}
 \sum_{k=1}^j a(\m^{k}, \m^k)-a(\m^{k-1}, \m^k)
  = \sum_{k=1}^j  \sum_{i=0}^N \int_\Omega\left( a_i^k|\m^k|^{\alpha_i+2}- a_i^{k-1}|\m^{k-1}|^{\alpha_i}\m^{k-1} \cdot\m^k \right)dx. 
\eeq
By Young's inequality
\beq\label{intsum}
\begin{split}
\sum_{i=0}^N a_i^{k-1}|\m^{k-1}|^{\alpha_i}\m^{k-1} \cdot\m^k 
&\le \sum_{i=0}^N a_i^{k-1}\left( \frac{\alpha_i+1}{\alpha_i+2}|\m^{k-1}|^{\alpha_i+2}+\frac{1}{\alpha_i+2} |\m^k|^{\alpha_i+2}\right)\\
&=\sum_{i=0}^N \left(\frac{(\alpha_i+1)a_i^{k-1}}{\alpha_i+2} |\m^{j-1}|^{\alpha_i+2}+ \frac{a_i^{k-1} -a_i^k }{\alpha_i+2}|\m^k|^{\alpha_i+2}+ \frac{a_i^k}{\alpha_i+2}  |\m^k|^{\alpha_i+2}\right).
\end{split}
\eeq
Substituting \eqref{intsum} into \eqref{twolastsum} yields  

\begin{multline}\label{a3}
 \sum_{k=1}^j a(\m^k, \m^k)-a(\m^{k-1}, \m^k)
 \ge \sum_{k=1}^j \sum_{i=0}^N \int_\Omega \Big(\frac{\alpha_i+1}{\alpha_i+2} ( a_i^k|\m^k|^{\alpha_i+2}-a_i^{k-1}|\m^{k-1}|^{\alpha_i+2}) +  \frac{a_i^k-a_i^{k-1}}{\alpha_i+2}|\m^k|^{\alpha_i+2}\Big) dx\\
=  \sum_{i=0}^N \int_\Omega \frac{\alpha_i+1}{\alpha_i+2} (a_i^0 |\m^0|^{\alpha_i+2} + a_i^j|\m^j|^{\alpha_i+2})  dx+ \sum_{i=0}^N\sum_{k=1}^j  \int_\Omega\frac{a_i^k-a_i^{k-1} }{\alpha_i+2}|\m^k|^{\alpha_i+2} dx\\
\ge \int_\Omega \left( \frac{1}{2}a_0^j |\m^j|^2 + \frac{s-1}{s} a_N^j|\m^j|^{s}\right) dx+ \sum_{i=0}^N\sum_{k=1}^j  \int_\Omega\frac{a_i^k-a_i^{k-1} }{\alpha_i+2}|\m^k|^{\alpha_i+2} dx.
\end{multline}
Due to \eqref{Mono}, the first term 
\beq\label{a4}
\sum_{k=1}^j \intd{\phi\frac{ (u^k)^\lambda -  (u^{k-1})^\lambda}{h},  u^k -  u^{k-1}}\ge 0. 
\eeq
Substituting  \eqref{a2}, \eqref{a3} and \eqref{a4} into  \eqref{neweq} yields  
\begin{multline}
\frac{1}{2}a_0^j  \norm{\m^j}_{L^2}^2 + \frac{s-1}{s}a_N^j \norm{\m^j}_{L^s}^{s}  
\le - \sum_{i=0}^N\sum_{k=1}^j  \int_\Omega\frac{a_i^k-a_i^{k-1} }{\alpha_i+2}|\m^k|^{\alpha_i+2} dx +(C_1+1) \left( \norm{f}_{L^\infty(0,T;L^{ r^*})}+ \norm{u^0}_{L^{ r}}+LT\right)\\
\le \frac{L}{2\underline{a}} \sum_{i=0}^N\sum_{k=1}^j  \int_\Omega \Dt a_i^k|\m^k|^{\alpha_i+2} dx+ (C_1+1) \left( \norm{f}_{L^\infty(0,T;L^{ r^*})}+ \norm{u^0}_{L^{ r}}+LT\right)\\
\le \frac{L}{2\underline{a}} \sum_{k=1}^j  \Dt a(\m^k, \m^k) + (C_1+1) \left( \norm{f}_{L^\infty(0,T;L^{ r^*})}+ \norm{u^0}_{L^{ r}}+LT\right).
\end{multline}
On the other hand by \eqref{sum2eqs}, 
\begin{multline*}
 \sum_{k=1}^j  \Dt a(\m^k, \m^k)=  -\sum_{k=1}^j  \intd{\phi( (u^k)^\lambda -  (u^{k-1})^\lambda),  u^k} +\Dt \intd{ f^k,  u^k}\\
\le  \frac {1}{r} \sum_{k=1}^j  \intd{\phi, (u^{k-1})^r -  (u^k)^r} +\Dt \norm{f^k}_{r^*} \norm{ u^k}_{L^r}
\le \frac {1}{r}\intd{\phi, (u^0)^r -  (u^j)^r} +  C_1 \sum_{k=1}^j   \Dt \norm{f^k}_{r^*}  \\
\le \frac {1}{r}\bar{\phi} (\norm {u^0}_{L^r}^r +\norm{u^j}_{L^r}^r) + C_1 T \norm{f}_{L^\infty(0,T; L^{r^*})}
\le \Big(\frac{\bar{\phi}}{r}+1\Big) (C_1^r+1)\left(\norm {u^0}_{L^r}^r+ T \norm{f}_{L^\infty(0,T; L^{r^*})}+1\right). 
\end{multline*}
Consequently,
\beq\label{direct}
\begin{split}
\frac{1}{2} a_0^j \norm{\m^j}_{L^2}^2 + \frac{s-1}{s} a_N^j\norm{\m^j}_{L^s}^{s}  
&\le \frac{L}{2\underline{a}}\Big(\frac{\bar{\phi}}{r}+1\Big)  (C_1^r+1)\left(\norm {u^0}_{L^r}^r+ T \norm{f}_{L^\infty(0,T; L^{r^*})}\right) \\
&\quad +(C_1+1) \left( \norm{f}_{L^\infty(0,T;L^{ r^*})}+ \norm{u^0}_{L^{ r}}+LT\right).
\end{split}
\eeq

The assertion \eqref{indep1} directly follows from \eqref{Buj}, \eqref{Bujlambda} and \eqref{direct}.    
\end{proof}
%==================================

\begin{lemma}\label{boundedness2} For sufficiently small $h$, there exists $\mathscr C>0$  independent of $h$ and $J$ such that
\beq\label{indep-derv}
\sum_{j=1}^J h \int_\Omega 
\left|\frac{ u^j-  u^{j-1}}{\Dt}\right|^{ r} dx  \le \mathscr C.   
\eeq
\end{lemma}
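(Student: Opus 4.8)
The plan is to bound the discrete time-difference quotient of $u^\lambda$ first, and then bootstrap to the quotient of $u$ itself using the elementary inequality \eqref{Mono}. The natural test function is $q = u^j - u^{j-1}$ in the second equation of \eqref{semidiscrete-prob}, exactly as in the proof of Lemma~\ref{boundedness1}, combined with $\vv=\m^j$ in the first equation at steps $j$ and $j-1$ to eliminate the $b(\m^j,\cdot)$ term via \eqref{sEq}. This yields, after summing over $k=1,\ldots,J$,
\beqs
\sum_{k=1}^{J}\intd{\phi\frac{(u^k)^\lambda-(u^{k-1})^\lambda}{h},u^k-u^{k-1}} = \sum_{k=1}^J\intd{f^k,u^k-u^{k-1}} - \sum_{k=1}^J\bigl(a(\m^k,\m^k)-a(\m^{k-1},\m^k)\bigr),
\eeqs
and the right-hand side is already controlled: the $f$-sum is bounded by summation by parts together with (H2) and the uniform bound \eqref{indep1} as in \eqref{a2}, while the telescoping-plus-error $a$-sum is bounded below by \eqref{a3} and above by the bound on $\sum_k h\, a(\m^k,\m^k)$ derived at the end of the proof of Lemma~\ref{boundedness1}. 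So $\sum_k \intd{\phi\,((u^k)^\lambda-(u^{k-1})^\lambda)(u^k-u^{k-1})}$ is bounded by a constant depending only on the data.

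Next I would convert this into the claimed estimate. By \eqref{Mono}, pointwise
\beqs
\frac{|u^k-u^{k-1}|^2}{|u^k|^{1-\lambda}+|u^{k-1}|^{1-\lambda}}\le \bigl((u^k)^\lambda-(u^{k-1})^\lambda\bigr)(u^k-u^{k-1}),
\eeqs
so $\sum_k \underline{\phi}\int_\Omega \frac{|u^k-u^{k-1}|^2}{|u^k|^{1-\lambda}+|u^{k-1}|^{1-\lambda}}\,dx \le \mathscr C_0$. To recover an $L^r$-bound on $(u^j-u^{j-1})/h$, write $|u^k-u^{k-1}|^r = |u^k-u^{k-1}|^r$ and apply H\"older with exponents $2/r$ and $2/(2-r)$ (note $r\in(1,2)$, so these are conjugate and positive):
\beqs
\int_\Omega |u^k-u^{k-1}|^r\,dx = \int_\Omega \frac{|u^k-u^{k-1}|^r}{\bigl(|u^k|^{1-\lambda}+|u^{k-1}|^{1-\lambda}\bigr)^{r/2}}\bigl(|u^k|^{1-\lambda}+|u^{k-1}|^{1-\lambda}\bigr)^{r/2}\,dx,
\eeqs
which is at most $\bigl(\int_\Omega \frac{|u^k-u^{k-1}|^2}{|u^k|^{1-\lambda}+|u^{k-1}|^{1-\lambda}}\,dx\bigr)^{r/2}\bigl(\int_\Omega (|u^k|^{1-\lambda}+|u^{k-1}|^{1-\lambda})^{r(2-r)^{-1}\cdot\frac{(2-r)}{2}\cdot\frac{2}{2-r}}\,dx\bigr)^{(2-r)/2}$; choosing the second exponent so that the integrand is $(|u^k|^{1-\lambda}+|u^{k-1}|^{1-\lambda})^{r/(2-r)}$ and checking that $\frac{(1-\lambda)r}{2-r}\le r$, i.e. that this power of $u$ is integrable given $u^k\in L^r$ uniformly (here one uses $\lambda\in(0,1)$ and the uniform bound \eqref{Buj}), the second factor is uniformly bounded. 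Then $h\int_\Omega|(u^k-u^{k-1})/h|^r\,dx = h^{1-r}\int_\Omega|u^k-u^{k-1}|^r\,dx$, and since $\int_\Omega|u^k-u^{k-1}|^r\,dx \le C\,\bigl(\int_\Omega \frac{|u^k-u^{k-1}|^2}{|u^k|^{1-\lambda}+|u^{k-1}|^{1-\lambda}}\,dx\bigr)^{r/2}$, summing and applying H\"older in $k$ (with the same pair of exponents, using $\sum_k h\le T$) turns the $\ell^{2/r}$ sum of the controlled quantities into the desired bound $\sum_j h\,\|(u^j-u^{j-1})/h\|_{L^r}^r\le \mathscr C$.

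The main obstacle I anticipate is the bookkeeping in this last interpolation step: one must verify carefully that the power $\frac{(1-\lambda)r}{2-r}$ does not exceed $r$ (equivalently $1-\lambda\le 2-r = 1-\lambda$, which is in fact an equality since $r=1+\lambda$ — so it works exactly, with no slack, and the second factor is literally controlled by $\|u^k\|_{L^r}^{(1-\lambda)\cdot\frac{r}{2-r}}+\|u^{k-1}\|_{L^r}^{\cdots}$), and likewise that the discrete H\"older summation in $k$ produces a factor $T^{(2-r)/2}$ rather than something $h$-dependent. Because $r=1+\lambda$ makes the two exponent conditions saturate simultaneously, the estimate closes cleanly; the rest is the same telescoping-with-Lipschitz-error analysis already carried out for Lemma~\ref{boundedness1}, so no new ideas beyond \eqref{Mono}, \eqref{ineqa} and (H2)--(H3) are needed.
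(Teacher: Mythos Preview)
Your proposal is correct and follows essentially the same route as the paper: derive the weighted bound $\sum_k\int_\Omega \frac{\phi h}{|u^k|^{1-\lambda}+|u^{k-1}|^{1-\lambda}}\big|\frac{u^k-u^{k-1}}{h}\big|^2\,dx\le C$ from the energy identity \eqref{neweq} via \eqref{Mono}, then interpolate with H\"older (exponents $2/r$, $2/(2-r)$) first in space and then in the discrete time index, exploiting the exact saturation $(1-\lambda)\frac{r}{2-r}=r$ so that the weight factor is controlled by $\|u^k\|_{L^r}^r+\|u^{k-1}\|_{L^r}^r$. The only cosmetic difference is that the paper bounds $\sum_j\bigl(a(\m^{j-1},\m^j)-a(\m^j,\m^j)\bigr)$ directly via a telescoping estimate \eqref{ab3} using Lipschitz continuity of the $a_i$ and the uniform bound \eqref{indep1} on $\|\m^j\|_{L^s}$, whereas you invoke \eqref{a3} together with the bound on $\sum_k h\,a(\m^k,\m^k)$ from the end of Lemma~\ref{boundedness1}; both arguments yield the same constant and the remaining bookkeeping is identical.
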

\begin{proof}
We rewrite \eqref{neweq} as the form 
\beq\label{a1}
\sum_{j=1}^J \int_\Omega 
\phi\frac{ (u^j)^\lambda -  (u^{j-1})^\lambda}{h}( u^j-   u^{j-1}) dx
 =  \sum_{j=1}^J a(\m^{j-1}, \m^j)-a(\m^j, \m^j) + \intd{f^j  ,u^j-   u^{j-1}}.
\eeq
%We estimate the right hand side of \eqref{a1} term by term.
From \eqref{a3} we have 
\begin{multline}\label{ab3}
 \sum_{j=1}^J a(\m^{j-1}, \m^j)-a(\m^j, \m^j)
 \le \sum_{j=1}^J \sum_{i=0}^N \int_\Omega \Big(\frac{\alpha_i+1}{\alpha_i+2} (a_i^{j-1}|\m^{j-1}|^{\alpha_i+2} - a_i^j|\m^j|^{\alpha_i+2}) +  \frac{a_i^{j-1} -a_i^j}{\alpha_i+2}|\m^j|^{\alpha_i+2}\Big) dx\\
\le  \sum_{i=0}^N \Big(\bar{a}(\norm{\m^0}_{L^s}^{\alpha_i+2} + \norm{\m^J}_{L^s}^{\alpha_i+2}) + \sum_{j=1}^J  \norm {a_i^{j-1} -a_i^j }_{L^\infty}\norm{\m^j}_{L^s}^{\alpha_i+2}\Big)\\
%\le  (\bar{a} +1)\sum_{i=0}^N \Big(\norm{\m^0}_{L^s}^{\alpha_i+2} +\mathcal C^{\alpha_i+2} + L\sum_{j=1}^J h\mathcal C^{\alpha_i+2}\Big)
\le  (\bar{a} +1) (N+1)\Big(1+ \norm{\m^0}_{L^s}^{\alpha_N+2} +\mathcal C^{\alpha_N+2} + LT(1+ \mathcal C^{\alpha_N+2})\Big).
\end{multline}
It follows from \eqref{a1}, \eqref{a2}, \eqref{ab3} and \eqref{indep1} that 
\beq\label{sumb}
\sum_{j=1}^J  \int_\Omega 
\frac{\phi \Dt }{ |u^j|^{1-\lambda}+|u^{j-1}|^{1-\lambda}} \left| \frac{ u^j-  u^{j-1}}{h}\right|^2 dx 
\le \sum_{j=1}^J \int_\Omega 
\phi\frac{ (u^j)^\lambda -  (u^{j-1})^\lambda}{h}( u^j-   u^{j-1}) dx  \le C_2,
\eeq
where
$$
C_2 \eqdef  C (C_1+1) \left( \norm{f}_{L^\infty(0,T;L^{ r^*})}+ \norm{u^0}_{L^{ r}}+LT\right)+ (\bar{a} +1) (N+1)\Big(1+ \norm{\m^0}_{L^s}^{\alpha_N+2} +\mathcal C^{\alpha_N+2} + LT(1+ \mathcal C^{\alpha_N+2})\Big).
$$

On the other hand, by H\"older's inequality
\beqs
\begin{split}
&\sum_{j=1}^J h \int_\Omega \left|\frac{ u^j-  u^{j-1}}{\Dt}\right|^{ r} dx\\
&\quad\le \sum_{j=1}^J  \left( \int_\Omega \Dt  \left|\frac{\phi }{ |u^j|^{1-\lambda}+|u^{j-1}|^{1-\lambda}}\right|^{- r/(2- r)} dx\right)^{1- r/2}
\left(\int_\Omega\frac{\phi \Dt}{ |u^j|^{1-\lambda}+|u^{j-1}|^{1-\lambda}} \left| \frac{ u^j-  u^{j-1}}{h}\right|^2 dx\right)^{r/2}\\
&\quad\le   \left(\sum_{j=1}^J\int_\Omega \Dt  \left|\frac{\phi }{ |u^j|^{1-\lambda}+|u^{j-1}|^{1-\lambda}}\right|^{- r/(2- r)} dx\right)^{1-r/2}
\left(\sum_{j=1}^J\int_\Omega\frac{\phi \Dt}{ |u^j|^{1-\lambda}+|u^{j-1}|^{1-\lambda}} \left| \frac{ u^j-  u^{j-1}}{h}\right|^2 dx\right)^{ r/2}. 
\end{split}
\eeqs
Since 
\begin{align*}
 \sum_{j=1}^J\int_\Omega \Dt  \left|\frac{\phi }{ |u^j|^{1-\lambda}+|u^{j-1}|^{1-\lambda}}\right|^{-r/(2- r)} dx
&\le  \sum_{j=1}^J\Dt \int_\Omega   \phi^{-r/(2- r)} \left(|u^j|^{1-\lambda}+|u^{j-1}|^{1-\lambda}\right)^{r/(2- r)} dx\\
&\le\sum_{j=1}^J\Dt  \underline{\phi}^{- r/(2- r)} 2^{2( r-1)/(2- r)}\Big( \norm{u^j}_{L^{ r}}^{ r}+\norm{u^{j-1}}_{L^{ r}}^{ r}\Big),
\end{align*}
we use \eqref{sumb} and  \eqref{indep1} to conclude that 
\beqs
\sum_{j=1}^J h \int_\Omega \left|\frac{ u^i-  u^{i-1}}{\Dt}\right|^{ r} dx \le (2C_2)^{ r/2} \mathcal C^{ r(1- r/2)} T^{1- r/2} \underline{\phi}^{- r/2}.
\eeqs
This completes the proof. 
\end{proof}

Next, we show that the mixed formulation \eqref{semidiscrete-prob} is equivalent to a variational formulation of the time-discretized parabolic equation.  To this end, we recall the nonlinear mapping $K$ of \eqref{rua}.  For fixed time $t=t_j$, we define the nonlinear mapping $K^j: \Omega \times \R^+ \to \R^+$ (see in \eqref{Kdef}) and its inverse defined by  
\beq\label{eq2j}
F^j(x,z)=a_0(x,t_j)z^{\alpha_0} + a_1(x,t_j)z^{\alpha_1}+\cdots +a_N(x,t_j)z^{\alpha_N},\quad z\ge 0.
\eeq 
%========================================
\begin{lemma}\label{equivProbs}

(i)\quad If $ u^j\in R(\Omega)=\{r\in L^{ r}(\Omega),  r=0 \text { on } \partial\Omega,\nabla r\in (L^{s^*}(\Omega))^d \}$ is a solution of the variational formulation. Find $ u^j\in R(\Omega)$ such that
\beq\label{VarProb}
\intd{\phi\frac{ (u^j)^\lambda-  (u^{j-1})^\lambda }{h},  q} + \intd{ K^j(x,|\nabla  u^j|)\nabla  u^j, \nabla q } =\intd{ f^j,q} \quad \text{ for all } q\in R(\Omega) 
\eeq
 then $(-K^j(x,|\nabla  u^j|)\nabla  u^j,  u^j)  $ is a solution of the mixed formulation~\eqref{semidiscrete-prob}. 

(ii)\quad  If $(\m^j,  u^j)\in W(\rm{div},\Omega)\times L^{ r}(\Omega)$ is a solution of the mixed formulation~\eqref{semidiscrete-prob} then
$ u^j$ is a solution of the variational formulation \eqref{VarProb}.  In particular, $ u^j \in R(\Omega) $.
\end{lemma}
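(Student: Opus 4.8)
The plan is to exploit the algebraic identity \eqref{rua}, which says that solving the Darcy--Forchheimer relation $F^j(x,|\m^j|)\m^j = -\nabla u^j$ for $\m^j$ is equivalent to writing $\m^j = -K^j(x,|\nabla u^j|)\nabla u^j$, where $K^j$ is the invertible map built from $F^j$ via \eqref{Kdef}. The lemma is really just a bookkeeping translation between the two formulations once this pointwise substitution is in hand, plus a regularity check that the substituted vector field lands in the right spaces.

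\emph{Proof of (i).} Assume $u^j\in R(\Omega)$ solves \eqref{VarProb}. Set $\m^j := -K^j(x,|\nabla u^j|)\nabla u^j$. First I would check the mapping properties: by \eqref{F-cont} (or directly from the growth of $F^j$ and the definition of $K^j$) one has the pointwise bound $|\m^j| = \mathcal F^{-1}(|\nabla u^j|) \le C(1+|\nabla u^j|^{1/(\alpha_N+1)})$, so since $\nabla u^j\in (L^{s^*}(\Omega))^d$ with $s^* = 2-\alpha = s/(s-1)$ and $s^*\cdot\tfrac{1}{\alpha_N+1}\le s$ (indeed $s^*/(\alpha_N+1) = s^*(1-\alpha) \le s$ is a direct exponent check), we get $\m^j\in (L^s(\Omega))^d$. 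Next, the first equation of \eqref{semidiscrete-prob}: by construction $F^j(x,|\m^j|)\m^j = -\nabla u^j$ pointwise, so for every $\vv\in V$,
\beqs
a(\m^j,\vv) = \intd{F^j(x,|\m^j|)\m^j,\vv} = -\intd{\nabla u^j,\vv} = \intd{u^j,\nabla\cdot\vv} = b(\vv,u^j),
\eeqs
where the integration by parts is justified because $u^j\in R(\Omega)$ vanishes on $\partial\Omega$ and $\vv\in W({\rm div};\Omega)$ (Green's formula, with the boundary term dropping out). This also shows $\nabla\cdot\vv$ pairs against $u^j$ consistently, and taking $\vv\in (\mathcal D(\Omega))^d$ identifies $\nabla\cdot\m^j$ in the distributional sense with the right-hand side of \eqref{VarProb}; testing \eqref{VarProb} against $q\in\mathcal D(\Omega)$ gives $\nabla\cdot\m^j = f^j - \tfrac{\phi}{h}((u^j)^\lambda-(u^{j-1})^\lambda)\in L^{r^*}(\Omega)$ (here using $u^j\in L^r(\Omega)$, so $(u^j)^\lambda\in L^{r/\lambda} = L^{r^*}$, and $f^j\in L^{r^*}$), hence $\m^j\in W({\rm div};\Omega) = V$. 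Finally, density of $\mathcal D(\Omega)$ in $Q = L^r(\Omega)$ upgrades the distributional identity to the second equation of \eqref{semidiscrete-prob} for all $q\in Q$.

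\emph{Proof of (ii).} Conversely, suppose $(\m^j,u^j)\in V\times Q$ solves \eqref{semidiscrete-prob}. Restricting the first equation to $\vv\in(\mathcal D(\Omega))^d$ gives $F^j(x,|\m^j|)\m^j = -\nabla u^j$ in the sense of distributions; since the left side lies in $(L^{s^*}(\Omega))^d$ (by the continuity bound \eqref{F-cont} and $\m^j\in (L^s(\Omega))^d$, noting $s^* = s/(s-1)$), we conclude $\nabla u^j\in (L^{s^*}(\Omega))^d$. Inverting the relation pointwise via \eqref{rua} yields $\m^j = -K^j(x,|\nabla u^j|)\nabla u^j$. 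To see $u^j = 0$ on $\partial\Omega$: the first equation of \eqref{semidiscrete-prob} reads $a(\m^j,\vv) = b(\vv,u^j)$ for all $\vv\in V$ with no boundary term, and combining with Green's formula $b(\vv,u^j) = \intd{\nabla\cdot\vv,u^j} = -\intd{\nabla u^j,\vv} + \intb{u^j,\vv\cdot\nu}$ together with $a(\m^j,\vv) = -\intd{\nabla u^j,\vv}$ forces $\intb{u^j,\vv\cdot\nu} = 0$ for all $\vv\in V$; since the normal trace $\vv\cdot\nu$ ranges over a space in duality with the trace of $u^j$, we get the trace of $u^j$ vanishes, so $u^j\in R(\Omega)$. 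Then substituting $\m^j = -K^j(x,|\nabla u^j|)\nabla u^j$ into the second equation of \eqref{semidiscrete-prob} and using $b(\m^j,q) = \intd{\nabla\cdot\m^j,q} = -\intd{\m^j,\nabla q}$ for $q\in R(\Omega)$ (again Green's formula, boundary term vanishing since $q\in R(\Omega)$) gives exactly \eqref{VarProb}.

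\emph{Main obstacle.} The routine parts are the two substitutions; the delicate points are (a) the exponent arithmetic showing that the nonlinear substitution maps $(L^{s^*})^d \leftrightarrow (L^s)^d$ compatibly — this is where assumption (H3), i.e.\ $r\le s^*$, and the relations \eqref{a-const } and \eqref{Bdef} are used to make every Lebesgue exponent close up — and (b) the boundary-trace argument in (ii): one must argue carefully that $\intb{u^j,\vv\cdot\nu}=0$ for all admissible $\vv$ actually forces the trace of $u^j$ to vanish, invoking the surjectivity of the normal-trace map from $W({\rm div};\Omega)$ onto $W^{-1/s,s^*}(\partial\Omega)$ and the duality with $W^{1/s,s}(\partial\Omega)$ recalled in Section~\ref{Intrsec}. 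I expect (b) to be the step requiring the most care to state rigorously.
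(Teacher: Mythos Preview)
Your proposal is correct and follows essentially the same route as the paper: set $\m^j = -K^j(x,|\nabla u^j|)\nabla u^j$, verify the first equation of \eqref{semidiscrete-prob} via Green's formula, derive the second by testing \eqref{VarProb} against $q\in\mathcal D(\Omega)$ and using density, and for (ii) reverse the steps, recovering the boundary condition $u^j|_{\partial\Omega}=0$ by testing the first equation against $\vv\in(\mathcal D(\bar\Omega))^d$ (equivalently, your normal-trace surjectivity argument). One small correction: the exponent check for $\m^j\in (L^s(\Omega))^d$ should read $s/(\alpha_N+1)\le s^*$ (in fact equality holds, since $s/(s-1)=s^*$), not $s^*/(\alpha_N+1)\le s$, and assumption (H3) is not actually needed for this lemma.
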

%==========================
\begin{proof} 

(i) Let $ u^j$ be a solution of \eqref{VarProb}. We define $\m^j=-K^j(x,|\nabla  u^j|)\nabla  u^j$. Then Green's formula yields
\beqs
\intd{F^j(x,|\m^j|)\m^j , \vv }  =-\intd{\nabla  u^j, \vv} =\intd{ u^j, \nabla\cdot \vv}  \quad \text{ for all } \vv\in V.
\eeqs
This is the first equation in \eqref{semidiscrete-prob}. To derive the second equation in~\eqref{semidiscrete-prob}, we consider \eqref{VarProb} for
$q\in \mathcal D(\Omega)\subset R(\Omega)$ 
\beqs
\intd{\phi\frac{ (u^j)^\lambda-  (u^{j-1})^{\lambda}}{h}, q} -(\m^j, \nabla q)=  \intd{f^j, q }, 
\eeqs
and then apply Green's formula we obtain  
\beqs
\intd{\phi\frac{ (u^j)^\lambda-  (u^{j-1})^\lambda}{h}, q}+ \intd{ \nabla \cdot \m^j, q } =\intd{f^j, q}.   
\eeqs

Because  $\mathcal D(\Omega)$ is densely embedded into $L^{ r}(\Omega)$, the second equation in \eqref{semidiscrete-prob} follows.

(ii) Let $(\m^j, u^j)$ be the solution of \eqref{semidiscrete-prob}. Applying Green's formula implies
\beqs
\intd{ F^j(x,|\m^j|)\m^j,  \vv}= \intd{ \nabla \cdot \vv ,  u^j } = \intd{ -\nabla  u^j, \vv }\quad \text{ for all } \vv\in (\mathcal D(\Omega))^d.   
\eeqs
Thus in the sense of distributions it holds $\nabla  u^j= -F^j(x,|\m^j|)\m^j \in (L^{s*}(\Omega))^d $. 

Consequently, $ u^j\in \{r\in L^{ r}(\Omega), \nabla r\in (L^{s^*}(\Omega))^d \}$  and $\m^j=-K^j(x,|\nabla  u^j|)\nabla u^j$. 

To prove that $ u^j$ fulfills \eqref{VarProb}, we consider $q\in R(\Omega)\subset L^{ r}(\Omega) $ in the first equation of \eqref{semidiscrete-prob}. Using integration by parts, we have  
 \beqs
 \intd{f^j,q}= \intd{\phi\frac{ (u^j)^\lambda -  (u^{j-1})^\lambda}{h}, q } + \intd{\nabla\cdot \m^j, q } = \intd{\phi\frac{ (u^j)^\lambda -  (u^{j-1})^\lambda}{h}, q } + \intd{ K^j(x, |\nabla  u^j|)\nabla  u^j, \nabla q }.
 \eeqs
 Finally, we consider again the first equation of \eqref{semidiscrete-prob} for $\vv \in(\mathcal D(\bar \Omega))^d$. Using integration by parts, we obtain
 \beqs
 0=-\intd{F^j(x,|\m^j|)\m^j ,\vv}+ \intd{\nabla \cdot \vv , u^j } = \intd{ \nabla u^j ,\vv}+ \intd{\nabla \cdot \vv,  u^j} = \int_{\partial\Omega}  u^j (\vv\cdot\nu) d\sigma. 
 \eeqs
 Consequently, $ u^j=0$ on $\partial \Omega$, i.e., $ u^j\in R(\Omega)$.  
  \end{proof}
  
Using this equivalence, we obtain a bound for $ u^j$ in the norm of $R(\Omega)$ defined by \[\norm{r}_R = \norm{r}_{L^{ r}}+ \norm{\nabla r}_{L^{s^*}}.\]
%==========================
\begin{lemma}\label{boundedness3} For sufficiently small $h$, there is a constants $\mathcal C>0$ independent of $h$ and $J$, such that
\begin{align}
\label{indep2} \norm{ u^j}_R+ \norm{\frac{ (u^j)^\lambda -  (u^{j-1})^\lambda }{h} }_{R'}+\norm{\nabla \cdot \m^j}_{R'} &\le \mathcal C\quad \text{for all } j=0,1,2,\ldots, J.
\end{align}

\end{lemma}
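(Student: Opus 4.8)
\emph{Strategy.} Every term in \eqref{indep2} will be obtained from the uniform bounds already available in Lemma~\ref{boundedness1}, together with the equivalence of Lemma~\ref{equivProbs}, Green's formula, and the second equation of \eqref{semidiscrete-prob}; the summed estimate of Lemma~\ref{boundedness2} is not needed here. First I would bound $\norm{u^j}_R = \norm{u^j}_{L^{ r}} + \norm{\nabla u^j}_{L^{s^*}}$. The first summand is exactly \eqref{indep1}. For the gradient, Lemma~\ref{equivProbs}(ii) tells us $u^j \in R(\Omega)$ with $\nabla u^j = -F^j(x,|\m^j|)\m^j$ a.e.; since $\alpha_0 = 0 < \alpha_1 < \ldots < \alpha_N$ and $0 \le a_i^j \le \bar a$, the elementary inequality $z^\beta \le 1 + z^{\alpha_N}$ for $0 \le \beta \le \alpha_N$ gives the pointwise bound $|\nabla u^j| = F^j(x,|\m^j|)\,|\m^j| \le (N+1)\bar a\,(1 + |\m^j|^{s-1})$. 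Raising this to the power $s^*$, using $(s-1)s^* = s$, \eqref{ee3} and the boundedness of $\Omega$, yields $\norm{\nabla u^j}_{L^{s^*}}^{s^*} \le C\bigl(|\Omega| + \norm{\m^j}_{L^s}^{s}\bigr) \le C'$ by \eqref{indep1}; hence $\norm{u^j}_R \le \mathcal C$ for $j = 1, \ldots, J$, and for $j = 0$ the same bound is immediate since $u^0 = u_0 \in W_0^{1,s^*}(\Omega) \cap L^{ r}(\Omega) \subset R(\Omega)$.

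Next I would bound $\norm{\nabla\cdot\m^j}_{R'}$. For any $q \in R(\Omega) \subset L^{ r}(\Omega)$, Green's formula applies (we have $\m^j \in W(\rm{div};\Omega)$, $\nabla q \in (L^{s^*}(\Omega))^d$, and $q = 0$ on $\partial\Omega$), so $\intd{\nabla\cdot\m^j, q} = -\intd{\m^j, \nabla q}$ and therefore $|\intd{\nabla\cdot\m^j, q}| \le \norm{\m^j}_{L^s}\norm{\nabla q}_{L^{s^*}} \le \norm{\m^j}_{L^s}\,\norm{q}_R$. Passing to the supremum over $q$ and using \eqref{indep1} once more gives $\norm{\nabla\cdot\m^j}_{R'} \le \norm{\m^j}_{L^s} \le \mathcal C$. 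This is the conceptual heart of the lemma: we have \emph{no} bound on $\nabla\cdot\m^j$ in $L^{ r^*}$ that is uniform in $h$ (indeed, from the second equation of \eqref{semidiscrete-prob}, $\nabla\cdot\m^j = f^j - \phi\,\tfrac{(u^j)^\lambda - (u^{j-1})^\lambda}{h}$ carries the factor $1/h$), so the estimate must be taken in the weaker space $R'$, where it depends on $\m^j$ only through $\norm{\m^j}_{L^s}$.

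Finally, for the discrete time derivative I would test the second equation of \eqref{semidiscrete-prob} with $q \in R(\Omega) \subset Q$: $\intd{\phi\,\tfrac{(u^j)^\lambda - (u^{j-1})^\lambda}{h}, q} = \intd{f^j, q} - \intd{\nabla\cdot\m^j, q}$. Since $|\intd{f^j, q}| \le \norm{f^j}_{L^{ r^*}}\norm{q}_{L^{ r}} \le \norm{f}_{L^\infty(0,T;L^{ r^*})}\,\norm{q}_R$, combining with the previous step gives $\norm{\phi\,\tfrac{(u^j)^\lambda - (u^{j-1})^\lambda}{h}}_{R'} \le \norm{f}_{L^\infty(0,T;L^{ r^*})} + \mathcal C$, and the asserted bound for $\norm{\tfrac{(u^j)^\lambda - (u^{j-1})^\lambda}{h}}_{R'}$ follows from $\phi \ge \underline\phi > 0$. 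All constants depend only on the data entering \eqref{indep1}, on $\norm{f}_{L^\infty(0,T;L^{ r^*})}$, $\underline\phi$, $N$, $\bar a$, $\deg F$ and $\Omega$, and in particular not on $h$ or $J$, which is precisely \eqref{indep2}. The only real obstacle is the observation just made — that one must not attempt an $L^{ r^*}$ estimate on $\nabla\cdot\m^j$ (or on the discrete derivative of $(u^j)^\lambda$), neither of which is uniform in $h$, and instead integrate by parts against $R(\Omega)$ test functions; once this is recognized, the proof is short.
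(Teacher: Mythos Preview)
Your proof is correct and uses the same three ingredients as the paper: the identity $\nabla u^j=-F^j(x,|\m^j|)\m^j$ to control $\norm{\nabla u^j}_{L^{s^*}}$ by $\norm{\m^j}_{L^s}$, integration by parts against $q\in R(\Omega)$, and the second equation of \eqref{semidiscrete-prob} to link $\nabla\cdot\m^j$ with the discrete derivative of $(u^j)^\lambda$. The only difference is the order: you bound $\norm{\nabla\cdot\m^j}_{R'}$ \emph{first} by Green's formula and then deduce the bound on the discrete derivative, whereas the paper first bounds the discrete derivative via the variational form \eqref{VarProb} and then deduces the bound on $\nabla\cdot\m^j$ from the second equation of \eqref{semidiscrete-prob}. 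Your ordering is slightly cleaner---the paper's last step invokes an inequality $\norm{\cdot}_{L^{r^*}}\le\norm{\cdot}_{R'}$ that goes the wrong way, a slip your route avoids entirely.
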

%==========================
\begin{proof}
To verify $\norm{ u^j}_R$ is bounded, it is sufficient to use \eqref{indep1} together with the observation that 
\begin{align*}
\norm{\nabla  u^j}_{L^{s^*}}^{s^*} =\norm{F^j(x,|\m^j|)\m^j}_{L^{s^*}}^{s^*}& \le C \sum_{i=0}^N\int_\Omega |\m^j|^{(\alpha_i+1)s^*} dx\\
&\le C\sum_{i=0}^N\norm{\m^j}_{L^s}^{(\alpha_i+1)s^*} \le CN\left(\norm{\m^j}_{L^s}^{s^*}+\norm{\m^j}_{L^s}^{(\alpha_N+1)s^*}\right) .%\le CN(\mathcal C^{s_*}+\mathcal C^{(\alpha_N+1)s_*}) .
\end{align*} 
We next to prove that $  \norm{\frac{ (u^j)^\lambda -  (u^{j-1})^\lambda }{h} }_{R'}$ is bounded.
 
By means of \eqref{VarProb}, we have for all $q\in R(\Omega)$,
\beqs
\begin{split}
\left |\intd{\phi\frac{ (u^j)^\lambda-  (u^{j-1})^\lambda}{h},q } \right |=\left| \intd{ f^j, q} -\intd{ K^j(x,|\nabla  u^j|)\nabla  u^j,q } \right|&=\left| \intd{f^j,q} + \intd{\m^j,\nabla q}\right|\\
  &\le \left(\norm{f^j}_{L^{ r^*}} + \norm{\m^j}_{L^s}\right)\norm{q}_R.
\end{split}
\eeqs
Thanks to the boundedness of the function $\phi$ and \eqref{indep1} we see that 
\beq\label{diffquotion}
\norm{\frac{ (u^j)^\lambda-  (u^{j-1})^\lambda}{h} }_{R'}\le C  \left(\norm{f^j}_{L^{ r^*}} + \norm{\m^j}_{L^s}\right) \le \mathcal C 
\eeq 
which is part of conclusion \eqref{indep2}. 
 
From the second equation of \eqref{semidiscrete-prob} yields
\beqs
\left|\intd{\nabla\cdot \m^j, q }\right| 
 = \left |\intd{ f^j, q }- \intd{\phi\frac{ (u^j)^\lambda-  (u^{j-1})^\lambda}{h}, q } \right | 
 \le \left(\norm{f^j}_{L^{ r^*}}+ \bar \phi \norm{\frac{ (u^j)^\lambda-  (u^{j-1})^\lambda}{h} }_{L^{ r^*}}   \right) \norm{q}_R,
\eeqs
which implies that 
\beqs
\norm{\nabla \cdot \m^j}_{R'} \le \norm{f^j}_{L^{ r^*}}+ \bar \phi \norm{\frac{ (u^j)^\lambda-  (u^{j-1})^\lambda}{h} }_{L^{ r^*}} \le \norm{f^j}_{L^{ r^*}}+ \bar \phi \norm{\frac{ (u^j)^\lambda-  (u^{j-1})^\lambda}{h} }_{R'}\le \mathcal C .
\eeqs
The proof is complete.    
\end{proof}
%==========================

\subsection{Solvability of the continuous problem} 
Due to the existence of unique solutions to the semi-discrete mixed formulation \eqref{semidiscrete-prob}, we obtain for every $J\in \N$ a $J+1$-tuple of solutions $(\m^j,  u^j)_{j=0,\ldots,J}\in ( W(\rm{div}, \Omega) \times L^{ r}(\Omega) ) ^{J+1}. $ We denote these $J + 1$-tuples with $\m_{\Dt}:=(\m^j)_{j=0,\ldots, J}\in  ( W(\rm{div}, \Omega) ) ^{J+1} $ and $  u_{\Dt}:= ( u^j)_{j=0,\ldots, J}\in  ( L^{ r}(\Omega) ) ^{J+1}$. We define step function by 
\beqs
\pi u_{\Dt}(t)=\sum_{j=1}^J \chi_j(t)u^j
 \in L^\infty(0,T; R(\Omega)),
\eeqs
where $\chi_j(t)$ is the characteristic function on the interval $(t_{j-1}, t_j]$ for $j=1,2,\ldots J$. We also defined a piecewise linear (in time) functions
\beqs 
\Pi u_{\Dt}(t)=\sum_{j=1}^J \chi_j(t)  \left(\frac{ u^j-  u^{j-1}}{\Dt} (t-t_j) +  u^j\right ).
\eeqs
In addition, we use piecewise constant approximations ${a_i}_{\Dt}$ and $f_{\Dt}$ of the coefficient functions $a_i$ and $f$, and piecewise constant operators $F_{\Dt}$ and $K_{\Dt}$. According to Lemmas~\ref{boundedness1} and \ref{boundedness3} the following bounds hold for sufficiently small $h$.
\begin{align*}
&\norm{\pi u_{\Dt}}_{L^\infty(0,T; R(\Omega))}\le \mathcal C ,  
&& \norm{\partial_t \Pi u_{\Dt} }_{L^ r(0,T;L^ r(\Omega)  )}\le \mathcal C, \\
%------------------------------------------
&\norm{(\pi u_{\Dt})^\lambda}_{L^\infty(0,T; L^{ r^*}(\Omega))}\le \mathcal C,       
&&\norm{ \partial_t \Pi u_{\Dt}^\lambda }_{L^\infty(0,T;R'(\Omega))}\le \mathcal C, \\
%----------------------------------------
&\norm{\pi\m_{\Dt}}_{L^\infty(0,T; (L^{s}(\Omega))^d)}\le \mathcal C,   
 &&\norm{\pi\nabla\cdot\m_{\Dt}}_{L^\infty(0,T;R'(\Omega))}\le \mathcal C,\\
 %-------------------------------------
&\norm{ F(x,|\pi\m_{\Dt}|)\pi\m_{\Dt}}_{L^\infty(0,T; (L^{s^*}(\Omega))^d)}\le \mathcal C, 
&&\norm{ u^J}_{L^ r(\Omega)}\le \mathcal C.
\end{align*}

Thus the exist a subsequences, again indexed by $\Dt$, that converge in corresponding weak*-topology; in detail 
 \begin{align}
\label{b1} &\pi u_{\Dt} \stackrel{*}{\rightharpoonup} u \, \text { in }\, L^\infty(0,T; R(\Omega)), 
&& \partial_t \Pi u_{\Dt} \stackrel{ *}{\rightharpoonup} u' \, \text { in }\, L^\infty(0,T;L^ r(\Omega))\\
%------------------------------
\label{b4} &(\pi u_{\Dt})^\lambda \stackrel{*}{\rightharpoonup} U \, \text { in }\, L^\infty(0,T; L^{ r^*}(\Omega)), 
&& \partial_t \Pi u_{\Dt}^\lambda \stackrel{ *}{\rightharpoonup} U' \, \text { in }\, L^\infty(0,T;R'(\Omega))\\
%---------------------------------
\label{b2} & \pi\m_{\Dt}\stackrel{ *}{\rightharpoonup}  \m \, \text { in } \, L^\infty(0,T; (L^{ r}(\Omega))^d),
&&\pi\nabla\cdot\m_{\Dt} 	\stackrel{ *}{\rightharpoonup}  \bar\m \, \text { in } \, L^\infty(0,T;R'(\Omega))\\
%--------------------------------
\label{b3} &F(x,|\pi\m_{\Dt}|)\pi\m_{\Dt}	\stackrel{ *}{\rightharpoonup}  \hat F\, \text { in }\, L^\infty(0,T; (L^{s^*}(\Omega))^d), 
&& u^J\stackrel{ }{\rightharpoonup}  u_T \,\text { in }\quad L^{ r}(\Omega).
\end{align}
%==================================

\begin{lemma}\label{identeq0}
(i)  The identity $ U=u^\lambda$ hold in the sense of distribution from $(0,T)$ to $L^{ r^*}(\Omega)$. 

(ii) The identity $u'=\partial_t u$  holds in the sense of distribution from $(0,T)$ to $L^{ r}(\Omega)$. That is  for all $\varphi\in \mathcal D ((0,T))$, 
\beqs
\int_0^T u'(t)\varphi (t) dt = - \int_0^T u(t)\varphi'(t) dt \, \text{ in } L^{ r}(\Omega).  
\eeqs

(iii) The identity $U'=\partial_t U$  holds in the sense of distribution from $(0,T)$ to $R'$. That is  for all $\varphi\in \mathcal D ((0,T))$, 
\beqs
\int_0^T U'(t)\varphi (t) dt = - \int_0^T U(t)\varphi'(t) dt \, \text{ in } R'(\Omega).  
\eeqs

(iv) The identity $\bar \m= \nabla\cdot \m$ hold in the sense of distribution on $\Omega$ hold for almost everywhere in   $(0,T)$. That is  for all $\psi\in \mathcal D (\Omega)$, 
\beq\label{coincide2}
\intd{\bar \m, \psi} = - \intd{\m, \nabla \psi } \, \, \text{ a.e in~}(0,T).  
\eeq

(v) The identity $\hat F = -\nabla u$ hold in $L^\infty (0,T; (L^{s^*}(\Omega))^d )$. 
That is  for all $\vv\in L^1(0,T; (L^{s}(\Omega))^d )$,
\beq\label{coincide3}
\int_0^T (\hat F, \vv) dt = - \int_0^T\intd{\nabla u,  \vv}  dt.  
\eeq
\end{lemma}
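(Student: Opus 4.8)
The plan is to prove each identity by passing to the limit in the corresponding integral relation for the step functions and piecewise-linear interpolants, using only the weak-* convergences \eqref{b1}--\eqref{b3} already established together with the uniform bounds from Lemmas~\ref{boundedness1}--\ref{boundedness3}. For part~(ii), I would start from the elementary fact that, for any $t\in(t_{j-1},t_j]$, $\partial_t\Pi u_{\Dt}(t)=(u^j-u^{j-1})/\Dt$ is precisely the derivative of the piecewise-linear interpolant, so $\int_0^T(\partial_t\Pi u_{\Dt})\varphi\,dt=-\int_0^T\Pi u_{\Dt}\,\varphi'\,dt$ for every $\varphi\in\mathcal D((0,T))$ (no boundary terms since $\varphi$ is compactly supported). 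Then I would note that $\Pi u_{\Dt}-\pi u_{\Dt}\to 0$ strongly: indeed $\|\Pi u_{\Dt}(t)-\pi u_{\Dt}(t)\|_{L^{ r}}\le \Dt\,\|(u^j-u^{j-1})/\Dt\|_{L^{ r}}$ on $(t_{j-1},t_j]$, and Lemma~\ref{boundedness2} bounds $\sum_j\Dt\|(u^j-u^{j-1})/\Dt\|_{L^{ r}}^{ r}$, so $\|\Pi u_{\Dt}-\pi u_{\Dt}\|_{L^{ r}(0,T;L^{ r})}^{ r}\le \Dt^{ r}\cdot\mathscr C\to 0$. Combining with $\pi u_{\Dt}\stackrel{*}{\rightharpoonup} u$ and $\partial_t\Pi u_{\Dt}\stackrel{*}{\rightharpoonup} u'$ gives $\int_0^T u'\varphi\,dt=-\int_0^T u\,\varphi'\,dt$, which is~(ii). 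Part~(iii) is completely analogous with $\lambda$-powers: $\partial_t\Pi u_{\Dt}^\lambda$ is the derivative of the piecewise-linear interpolant of $((u^j)^\lambda)_j$, and by Lemma~\ref{boundedness3} (the $R'$-bound \eqref{diffquotion}) together with the weak-* convergences \eqref{b4} the same integration-by-parts identity passes to the limit in $R'(\Omega)$.

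For part~(i), the identity $U=u^\lambda$, the point is that $z\mapsto z^\lambda$ is continuous and monotone, so I would use a compactness argument: from the uniform bound $\|\pi u_{\Dt}\|_{L^\infty(0,T;R(\Omega))}\le\mathcal C$ together with the bound on $\partial_t\Pi u_{\Dt}$ in $L^{ r}(0,T;L^{ r}(\Omega))$, and the compact embedding $R(\Omega)\hookrightarrow\hookrightarrow L^{ r}(\Omega)$ (since $R(\Omega)$ embeds in $W^{1,s^*}$ with $s^*>1$, which is compactly embedded in $L^{ r}$ because $ r<s^*$ by (H3) and Rellich--Kondrachov), an Aubin--Lions--Simon argument yields a subsequence with $\Pi u_{\Dt}\to u$ strongly in $L^{ r}(0,T;L^{ r}(\Omega))$, hence (using again $\Pi u_{\Dt}-\pi u_{\Dt}\to0$) also $\pi u_{\Dt}\to u$ strongly and, along a further subsequence, a.e.\ on $\Omega\times(0,T)$. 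Then $(\pi u_{\Dt})^\lambda\to u^\lambda$ a.e., and since $(\pi u_{\Dt})^\lambda$ is bounded in $L^\infty(0,T;L^{ r^*}(\Omega))$, Vitali's (or the generalized dominated convergence / Lebesgue) theorem identifies the weak-* limit: $U=u^\lambda$.

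Parts~(iv) and~(v) are the two that follow most directly from linearity of the differential operators. For~(iv): for each fixed $\psi\in\mathcal D(\Omega)$ and each $j$, Green's formula gives $(\nabla\cdot\m^j,\psi)=-(\m^j,\nabla\psi)$; multiplying by $\chi_j(t)$ and summing gives $(\pi\nabla\cdot\m_{\Dt}(t),\psi)=-(\pi\m_{\Dt}(t),\nabla\psi)$ for a.e.\ $t$, and testing against an arbitrary $\eta\in L^1(0,T)$ and using \eqref{b2} for both sequences lets me pass to the limit, yielding $(\bar\m,\psi)=-(\m,\nabla\psi)$ a.e.\ in $(0,T)$, i.e.\ $\bar\m=\nabla\cdot\m$. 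For~(v): the first equation of \eqref{semidiscrete-prob} reads $(F^j(x,|\m^j|)\m^j,\vv)=(u^j,\nabla\cdot\vv)$ for all $\vv\in V$; assembling the step functions gives $\int_0^T(F_{\Dt}(x,|\pi\m_{\Dt}|)\pi\m_{\Dt},\vv)\,dt=\int_0^T(\pi u_{\Dt},\nabla\cdot\vv)\,dt$ for $\vv\in L^1(0,T;V)$, and from Lemma~\ref{equivProbs}(ii) we in fact have $F^j(x,|\m^j|)\m^j=-\nabla u^j$ in $(L^{s^*})^d$, so $\int_0^T(F_{\Dt}(x,|\pi\m_{\Dt}|)\pi\m_{\Dt},\vv)\,dt=-\int_0^T(\nabla(\pi u_{\Dt}),\vv)\,dt$; now \eqref{b3} handles the left side and $\pi u_{\Dt}\stackrel{*}{\rightharpoonup}u$ in $L^\infty(0,T;R(\Omega))$ (hence $\nabla\pi u_{\Dt}\stackrel{*}{\rightharpoonup}\nabla u$ in $L^\infty(0,T;(L^{s^*})^d)$) handles the right side, giving $\hat F=-\nabla u$. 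The main obstacle is part~(i): one must secure genuine strong (a.e.) convergence of the density sequence, which requires the Aubin--Lions-type compactness and is exactly where hypothesis (H3) ($ r\le s^*$), guaranteeing the compact Sobolev embedding $R(\Omega)\hookrightarrow\hookrightarrow L^{ r}(\Omega)$, is used; everything else is linear-functional bookkeeping.
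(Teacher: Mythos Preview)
Your proposal is correct and follows essentially the same route as the paper. The only noteworthy difference is in part~(i): you invoke Aubin--Lions--Simon with the compact embedding $R(\Omega)\hookrightarrow\hookrightarrow L^{r}(\Omega)$, whereas the paper instead observes directly that $\Pi u_{\Dt}$ is uniformly bounded in $W^{1,r}((0,T)\times\Omega)$ (spatial gradient in $L^{r}$ by (H3) since $\nabla u^j\in L^{s^*}\subset L^{r}$, time derivative in $L^{r}$ by Lemma~\ref{boundedness2}) and applies Rellich--Kondrachov on the space--time cylinder to obtain the same strong convergence $\Pi u_{\Dt}\to u$ in $L^{r}((0,T)\times\Omega)$. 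Both packagings use exactly the same ingredients, and for parts~(ii)--(v) your argument coincides with the paper's; your explicit estimate showing $\|\Pi u_{\Dt}-\pi u_{\Dt}\|_{L^{r}(0,T;L^{r})}^{r}\le \Dt^{r}\mathscr C\to 0$ is in fact more detailed than what the paper writes at the corresponding step.
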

%--------------------------------------------------------------------
\begin{proof}
(i) Since $\pi u_{\Dt} \in L^\infty(0,T;R(\Omega))$, $\Pi u_{\Dt} \in L^\infty(0,T;R(\Omega))$. In particular,   $\Pi u_{\Dt} \in L^ r(0,T;L^ r(\Omega))$ and $\partial_{x_i} (\Pi u_{\Dt}) \in L^ r(0,T;L^ r(\Omega))$.   Due to  \eqref{indep-derv}, $\partial_t \Pi u_{\Dt} \in L^{ r}(0,T; L^ r(\Omega))$. This implies $\Pi u_{\Dt}\in W^{1, r}((0,T)\times \Omega  ) $. The Rellich- Kondrachov theorem yields that  $W^{1, r}((0,T)\times \Omega)$ is compactly embedded in $L^{ r}((0,T)\times \Omega  )$. There is the subsequence $\Pi u_h \to u$ strongly in $L^ r( (0,T)\times\Omega )$. Thus $\lim_{h\to 0}\Pi u_h^\lambda = u^\lambda$ a.e in  $L^{ r^*}((0,T)\times \Omega)$.
Since $\Pi u_{\Dt}^\lambda$ is bounded in $L^{ r^*}((0,T)\times \Omega)$ we conclude that $\Pi u_{\Dt}^\lambda$ converge weakly to $u^\lambda$ in $L^{ r^*}((0,T)\times \Omega)$ that is for all $\varphi\in L^{ r}((0,T),\Omega)$, 
\beqs
\lim_{\Dt\to 0} \int_0^T \intd{\Pi u_{\Dt}^\lambda ,\varphi }dt = \int_0^T \intd{u^\lambda ,\varphi }dt.
\eeqs 
On the other hand,
\beqs
\lim_{\Dt\to 0} \int_0^T \intd{\Pi u_{\Dt}^\lambda ,\varphi }dt=\lim_{\Dt\to 0} \int_0^T \intd{\pi u_{\Dt}^\lambda ,\varphi }dt  =\lim_{\Dt\to 0} \int_0^T \intd{(\pi u_{\Dt})^\lambda ,\varphi }dt  =\int_0^T\intd{ U,\varphi}dt. 
\eeqs
Then the assertion follows. 

(ii) Let $\varphi\in \mathcal D (0,T)$ then  
\beqs
\int_0^T u'(t) \varphi (t) dt = \lim_{\Dt\to 0} \int_0^T \partial_t \Pi u_{\Dt}  \varphi (t) dt=  -\lim_{\Dt\to 0}\int_0^T  \Pi u_{\Dt}  \varphi' (t) dt
=-\lim_{\Dt\to 0}\int_0^T  \pi u_{\Dt}  \varphi' (t) dt=-\int_0^T u \varphi' (t) dt.
\eeqs

(iii) Similar to (ii)

(iv) Let $\psi\in \mathcal D(\Omega)$ and $\varphi\in \mathcal D(0,T)$ then
\beqs
 \int_0^T \intd{\bar \m, \psi} \varphi (t) dt = \lim_{\Dt\to 0}\int_0^T \intd{\pi\nabla\cdot\m_{\Dt},\psi}\varphi(t) dt
  = - \lim_{\Dt\to 0}\int_0^T \intd{ \pi\m_{\Dt},\nabla \psi}\varphi(t) dt= - \int_0^T (\m,\nabla\psi) \varphi(t) dt.
\eeqs

(v) For all $\vv\in L^1(0,T,(L^s(\Omega))^d)$, 
\beqs
\int_0^T( \hat F, \vv)dt =\lim_{\Dt\to 0}\int_0^T \intd{  F(x,|\pi\m_{\Dt}|)\pi\m_{\Dt}, \vv}dt = -\lim_{\Dt\to 0}\int_0^T \intd{  \pi\nabla  u_{\Dt}, \vv }dt =- \int_0^T\intd{\nabla u, \vv}dt.
\eeqs
The proof is complete.
\end{proof}
%==================================
\begin{lemma}\label{identeq1}
The following identity holds in $L^\infty(0,T; L^{ r}(\Omega))$ 
\beq\label{firsteq}
\phi \partial_t u^\lambda +\nabla\cdot \m = f. 
\eeq
Furthermore $u(x,0)=u_0(x)$, and $  u(x,T)=u_T$.
\end{lemma}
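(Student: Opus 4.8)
The plan is to obtain \eqref{firsteq} by passing to the limit $\Dt\to 0$ in the continuity equation of the semi-discrete mixed formulation~\eqref{semidiscrete-prob}, rewritten in terms of the interpolation operators, and to read off the two end-point conditions from the convergence of the piecewise-linear interpolant $\Pi u_{\Dt}$.

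First I would recast the second equation of~\eqref{semidiscrete-prob} in time-continuous form: for $t\in(t_{j-1},t_j]$ and $q\in R(\Omega)$ it reads
\beqs
\intd{\phi\,\partial_t\Pi u_{\Dt}^\lambda(t),q}+\intd{\pi\nabla\cdot\m_{\Dt}(t),q}=\intd{f_{\Dt}(t),q}.
\eeqs
Multiplying by $\varphi\in\mathcal D(0,T)$, integrating over $(0,T)$ and letting $\Dt\to 0$, I would pass to the limit term by term. The divergence term is handled by $\pi\nabla\cdot\m_{\Dt}\stackrel{*}{\rightharpoonup}\bar\m=\nabla\cdot\m$ from~\eqref{b2} and Lemma~\ref{identeq0}(iv); the right-hand side by the strong convergence $f_{\Dt}\to f$ in $L^\infty(0,T;L^{r^*}(\Omega))$, which follows from the Lipschitz hypothesis (H2). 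For the first term I would use that $\Pi u_{\Dt}^\lambda$ is bounded in $L^\infty(0,T;L^{r^*}(\Omega))$ and, by the strong $L^r$-convergence $\Pi u_{\Dt}\to u$ of Lemma~\ref{identeq0}(i), converges to $u^\lambda$; integrating by parts in time then gives $\int_0^T\intd{\phi\,\partial_t\Pi u_{\Dt}^\lambda,q}\varphi\,dt\to\int_0^T\intd{\phi\,\partial_t u^\lambda,q}\varphi\,dt$, consistently with \eqref{b4} and Lemma~\ref{identeq0}(i),(iii). Collecting these limits yields
\beqs
\int_0^T\intd{\phi\,\partial_t u^\lambda+\nabla\cdot\m-f,\,q}\varphi\,dt=0\qquad\text{for all }q\in R(\Omega),\ \varphi\in\mathcal D(0,T),
\eeqs
and since $\mathcal D(\Omega)\subset R(\Omega)$ is dense in $L^{r^*}(\Omega)$ this gives~\eqref{firsteq} for a.e.\ $t\in(0,T)$; the a priori bounds listed before~\eqref{b1} put all three terms in $L^\infty(0,T;R'(\Omega))$, so the identity holds there, in particular in the sense claimed.

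For the end-point values I would use the extra time-regularity of $\Pi u_{\Dt}$. From the uniform bound $\norm{\partial_t\Pi u_{\Dt}}_{L^r(0,T;L^r(\Omega))}\le\mathcal C$ of Lemma~\ref{boundedness2} and the strong convergence $\Pi u_{\Dt}\to u$ in $L^r((0,T)\times\Omega)$ from Lemma~\ref{identeq0}(i), the sequence $(\Pi u_{\Dt})$ converges weakly to $u$ in $W^{1,r}(0,T;L^r(\Omega))$. Since this space embeds continuously into $C([0,T];L^r(\Omega))$, the evaluations $v\mapsto v(0)$ and $v\mapsto v(T)$ are weakly sequentially continuous into $L^r(\Omega)$. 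As $\Pi u_{\Dt}(\cdot,0)=u^0=u_0$ for every $\Dt$, passing to the limit gives $u(\cdot,0)=u_0$; and as $\Pi u_{\Dt}(\cdot,T)=u^J\rightharpoonup u_T$ in $L^r(\Omega)$ by~\eqref{b3}, we obtain $u(\cdot,T)=u_T$.

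The genuinely delicate step is the passage to the limit in the nonlinear term $\partial_t\Pi u_{\Dt}^\lambda$: it relies on upgrading the weak convergence of $\Pi u_{\Dt}$ to strong $L^r$-convergence via the Rellich--Kondrachov compactness already invoked in Lemma~\ref{identeq0}, which is what allows one to move the limit through $s\mapsto s^\lambda$ and identify $U'=\partial_t u^\lambda$. Once this is secured, all remaining passages are linear and use only the stated weak-$*$ convergences together with $f_{\Dt}\to f$.
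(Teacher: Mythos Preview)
Your argument for the identity~\eqref{firsteq} is essentially the paper's: both multiply the second equation of~\eqref{semidiscrete-prob} by a time test function, integrate, and pass to the limit using the weak-$*$ convergences~\eqref{b1}--\eqref{b3} together with Lemma~\ref{identeq0}. The only cosmetic difference is that the paper discretizes the time test function as $\varphi_{\Dt}(t)=\varphi(t_{j-1})$ and sums, whereas you keep $\varphi\in\mathcal D(0,T)$ throughout; the passages are the same once one notes $\psi\varphi_{\Dt}\to\psi\varphi$ strongly in $L^1(0,T;R(\Omega))$.

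For the end-point conditions your route is genuinely different. The paper performs an Abel summation on the discrete equation, passes to the limit, and compares the resulting identity with the one obtained by integrating~\eqref{firsteq} against $\psi\varphi$ with $\varphi\in\mathcal D([0,T])$ and integrating by parts in time; the boundary terms then force $u^\lambda(0)=(u^0)^\lambda$ and $u^\lambda(T)=u_T^\lambda$. You instead use the uniform bound of Lemma~\ref{boundedness2} to place $\Pi u_{\Dt}$ in a bounded set of $W^{1,r}(0,T;L^r(\Omega))$, identify the weak limit via the strong $L^r$-convergence from Lemma~\ref{identeq0}(i), and invoke the continuous embedding $W^{1,r}(0,T;L^r(\Omega))\hookrightarrow C([0,T];L^r(\Omega))$ together with weak-to-weak continuity of the linear trace maps. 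This is correct and arguably more direct, since it avoids the double limiting procedure; on the other hand, the paper's comparison argument is self-contained at the level of the weak-$*$ convergences already stated and does not appeal to the additional $L^r$-bound on $\partial_t\Pi u_{\Dt}$ from Lemma~\ref{boundedness2}.
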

%----------------------------------------------------------------------
\begin{proof}
For $\varphi\in \mathcal D ([0,T])$ we defined the step function $\varphi_{\Dt}$ by 
\beqs
\varphi_{\Dt}(t) =\begin {cases}  
                        \varphi(t_{j-1}) &\text { if } t_{j-1}\le t< t_j, j=1,\ldots, J\\
                        \varphi(T)& \text { if }         t=t_J       
                      \end{cases}.
\eeqs
Using the test function $q=\psi\in\mathcal D(\Omega)$ in the second equation  of \eqref{semidiscrete-prob}, multiplying by $\Dt\varphi(t_{j-1})$ and summing up on $j=1,\ldots,J$, we obtain 
\beq\label{seo}
\sum_{j=1}^J \intd{ \phi\frac{ (u^j)^\lambda -  (u^{j-1})^\lambda}{h}, \psi}  \Dt\varphi(t_{j-1})  + (\nabla\cdot\m^j, \psi) \Dt\varphi(t_{j-1})
  =\sum_{j=1}^J (f^j, \psi) \Dt\varphi(t_{j-1}). 
\eeq 
Using the piecewise constant function $\pi$ this reads
\beqs
\int_0^T\intd{ \phi \partial_t \Pi u_{\Dt}^\lambda, \psi} \varphi_{\Dt} dt  + \int_0^T\intd{ \pi \nabla\cdot\m_{\Dt}, \psi} \varphi_{\Dt}  dt =\int_0^T\intd{ \pi f, \psi} \varphi_{\Dt}  dt.
\eeqs 
Since $\psi\varphi_{\Dt}$ converges strongly to $\psi\varphi$ in $L^1(0,T;R(\Omega))$. Hence a passage to the limit $\Dt\to 0$ implies that
\beq\label{seo2}
\int_0^T\intd{ \phi \partial_t u^\lambda, \psi} \varphi  dt  + \int_0^T\intd{ \nabla\cdot\m, \psi} \varphi dt =\int_0^T\intd{ f, \psi} \varphi dt.  
\eeq 
The set $\{  \psi\varphi, \psi\in \mathcal D(\Omega), \varphi\in \mathcal D ( \overline{(0,T)} ) \}$ is dense subset of $L^1(0,T; R(\Omega))$. Thus the identity \eqref{firsteq}  is established. 
 
 To prove the remaining two identities we rewrite \eqref{seo} as form
  \begin{multline*}
- \sum_{j=1}^J \Dt\intd{\phi (u^j)^\lambda, \psi}\frac{\varphi(t_j)- \varphi(t_{j-1})}{\Dt} + \sum_{j=1}^J \Dt\intd{ \nabla\cdot\m^j, \psi} \varphi(t_{j-1}) \\
=\sum_{j=1}^J \Dt\intd{ f^j, \psi }\varphi(t_{j-1})
-  \intd{ \phi (u^J)^\lambda, \psi}\varphi(T)+\intd{\phi (u^0)^\lambda, \psi}\varphi(0),
\end{multline*}
which is 
 \begin{multline*}
  -\int_0^T\intd {\phi \pi u_{\Dt}^\lambda, \psi} \partial_t \Pi\varphi dt  + \int_0^T\intd{ \pi \nabla\cdot\m_{\Dt}, \psi} \varphi_{\Dt} dt
   =\int_0^T\intd{\pi f, \psi} \varphi_{\Dt} dt-  \intd{\phi (u^J)^\lambda,\psi}\varphi(T)+\intd{\phi (u^0)^\lambda,\psi}\varphi(0).
 \end{multline*}
   Passing to the limit $\Dt\to 0$ we obtain
   \begin{multline}\label{ab0}
   -\int_0^T\intd{\phi u^\lambda, \psi} \partial_t \varphi dt  + \int_0^T\intd{ \nabla\cdot\m, \psi} \varphi dt=\int_0^T\intd{ f, \psi }\varphi dt-  \intd{ \phi (u_T)^\lambda, \psi}\varphi(T)+\intd{\phi(u^0)^\lambda, \psi}\varphi(0).
   \end{multline}
   On the other hand, partial integration of \eqref{seo2} yields
   \begin{multline}\label{ab1}
-\int_0^T\intd{\phi u^\lambda,\psi} \partial_t \varphi  dt  + \int_0^T\intd{\nabla\cdot\m, \psi} \varphi dt=\int_0^T\intd{ f, \psi} \varphi dt- \intd{\phi u^\lambda(T),\psi} \varphi(T)+ \intd{ \phi u^\lambda(0),\psi} \varphi(0).  
\end{multline} 
 We compare \eqref{ab0} and \eqref{ab1} to obtain 
 \beqs
 \intd{\phi (u^\lambda(0)-(u^0)^\lambda), \psi} \varphi(0)=\intd{\phi (u^\lambda(T)-u^\lambda_T), \psi} \varphi(T).
   \eeqs
   Since $\varphi(0)$ and $\varphi(T)$ are arbitrary, we have 
\beqs
\intd{\phi (u^\lambda(0)-(u^0)^\lambda ), \psi}=0=\intd{\phi (u^\lambda(T)-u^\lambda_T), \psi}.
   \eeqs
Thus $u(0)=u^0$ and $u(T)=u_T$.
  \end{proof}
  
\begin{lemma}\label{Fhat-Eq-F} The limit $\m$ of $\pi\m_{\Dt}$ and $\hat F$ of $ F(x,t, \pi\m_{\Dt})$ satisfy 
$\hat F = F(x,t,|\m|)\m$ in $L^\infty(0,T; (L^{s^*}(\Omega))^d )$. That means  
\beqs
\int_0^T ( \hat F,  \vv ) dt = \int_0^T  \intd{ F(x,t,|\m|)\m, \vv }dt \quad \text{ for all } \vv\in L^1(0,T; (L^s(\Omega))^d).
\eeqs
\end{lemma}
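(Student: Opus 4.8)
The plan is to identify the weak limit $\hat F$ with $F(x,t,|\m|)\m$ by Minty's monotonicity trick. Fix an arbitrary $\vv\in L^s(0,T;(L^s(\Omega))^d)$. Applying the monotonicity inequality \eqref{F-mono} pointwise in $(x,t)$ and integrating in space and time,
\[
\int_0^T\intd{F(x,t,|\pi\m_{\Dt}|)\pi\m_{\Dt}-F(x,t,|\vv|)\vv,\ \pi\m_{\Dt}-\vv}\,dt\ge 0 .
\]
Expanding the bracket, the terms $\int_0^T\intd{F(x,t,|\pi\m_{\Dt}|)\pi\m_{\Dt},\vv}\,dt$ and $\int_0^T\intd{F(x,t,|\vv|)\vv,\ \pi\m_{\Dt}-\vv}\,dt$ pass to the limit by the weak-$*$ convergences \eqref{b2}--\eqref{b3}, the second one because $F(x,t,|\vv|)\vv\in L^{s^*}(0,T;(L^{s^*}(\Omega))^d)$ by \eqref{F-cont}. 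Hence it suffices to prove
\[
\limsup_{\Dt\to 0}\int_0^T a(\pi\m_{\Dt},\pi\m_{\Dt})\,dt\ \le\ \int_0^T(\hat F,\m)\,dt .
\]

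For the left-hand side I would use the discrete energy identity \eqref{sum2eqs}: multiplying it by $\Dt$ and summing over $j=1,\dots,J$ gives
\[
\int_0^T a(\pi\m_{\Dt},\pi\m_{\Dt})\,dt=\int_0^T(\pi f_{\Dt},\pi u_{\Dt})\,dt-\sum_{j=1}^{J}\intd{\phi\big((u^j)^\lambda-(u^{j-1})^\lambda\big),u^j}.
\]
By \eqref{ineqa} the last sum is bounded below by $\tfrac1{r^*}\intd{\phi,(u^J)^r-(u_0)^r}$. Taking the $\limsup$: $\pi f_{\Dt}\to f$ strongly in $L^1(0,T;L^{r^*}(\Omega))$ (piecewise-constant interpolation of $f$, which is Lipschitz in time by (H2)) while $\pi u_{\Dt}\stackrel{*}{\rightharpoonup}u$ in $L^\infty(0,T;L^{r}(\Omega))$, so $\int_0^T(\pi f_{\Dt},\pi u_{\Dt})\,dt\to\int_0^T(f,u)\,dt$; and since $u^J\rightharpoonup u_T=u(T)$ in $L^r(\Omega)$ (Lemma~\ref{identeq1}) and $v\mapsto\intd{\phi,|v|^r}$ is convex and strongly continuous, hence weakly lower semicontinuous on $L^r(\Omega)$, we obtain $\limsup\big(-\tfrac1{r^*}\intd{\phi,(u^J)^r}\big)\le-\tfrac1{r^*}\intd{\phi,|u(T)|^r}$. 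Altogether,
\[
\limsup_{\Dt\to0}\int_0^T a(\pi\m_{\Dt},\pi\m_{\Dt})\,dt\le\int_0^T(f,u)\,dt-\tfrac1{r^*}\intd{\phi,|u(T)|^r}+\tfrac1{r^*}\intd{\phi,|u_0|^r}.
\]

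For the right-hand side I would test the limit equation \eqref{firsteq} with $u\in L^1(0,T;R(\Omega))$: by Green's formula (recall $u(t)\in R(\Omega)$ vanishes on $\partial\Omega$) together with Lemma~\ref{identeq0}(iv),(v) one gets $\int_0^T\intd{\nabla\cdot\m,u}\,dt=-\int_0^T(\m,\nabla u)\,dt=\int_0^T(\hat F,\m)\,dt$, so that
\[
\int_0^T(\hat F,\m)\,dt=\int_0^T(f,u)\,dt-\int_0^T\intd{\phi\,\partial_t u^\lambda,u}\,dt .
\]
Comparing with the previous display, the reduced claim follows once we establish the chain-rule identity
\[
\int_0^T\intd{\phi\,\partial_t u^\lambda,u}\,dt=\tfrac1{r^*}\intd{\phi,|u(T)|^r}-\tfrac1{r^*}\intd{\phi,|u_0|^r}.
\]
Granting this, the Minty argument closes: $\int_0^T\intd{\hat F-F(x,t,|\vv|)\vv,\ \m-\vv}\,dt\ge0$ for every $\vv$; taking $\vv=\m-\tau\mathbf{w}$ with $\tau>0$ and $\mathbf{w}\in L^s(0,T;(L^s(\Omega))^d)$, dividing by $\tau$ and letting $\tau\to0^+$ (using the continuity \eqref{F-cont} and dominated convergence) gives $\int_0^T\intd{\hat F-F(x,t,|\m|)\m,\ \mathbf{w}}\,dt\ge0$ for all such $\mathbf{w}$, and since $\mathbf{w}$ is arbitrary, $\hat F=F(x,t,|\m|)\m$ a.e.

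\textbf{Main obstacle.} The critical point is the chain-rule identity for the doubly nonlinear term, since a priori $u^\lambda$ and $\partial_t u^\lambda$ only live in $L^\infty(0,T;L^{r^*}(\Omega))$ and $L^\infty(0,T;R'(\Omega))$, and $R'(\Omega)$ strictly contains $L^{r^*}(\Omega)$. I would exploit the extra regularity furnished by Lemma~\ref{boundedness2}, namely $\partial_t u\in L^{r}(0,T;L^{r}(\Omega))$, together with $u\in L^\infty(0,T;R(\Omega))$, so that $u\in W^{1,r}(0,T;L^r(\Omega))\hookrightarrow C([0,T];L^r(\Omega))$; since $v\mapsto\tfrac1{r^*}\intd{\phi,|v|^r}$ is $C^1$ on $L^r(\Omega)$, the map $t\mapsto\tfrac1{r^*}\intd{\phi,|u(t)|^r}$ is then absolutely continuous with a.e.\ derivative $\lambda\intd{\phi\,u^\lambda,\partial_t u}=\intd{\phi\,\partial_t u^\lambda,u}$. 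I would verify the last identification first for the regularized nonlinearity $s\mapsto(s+\delta)^\lambda$, whose composition with $u$ is Lipschitz and to which the standard chain rule in $W^{1,r}(0,T;L^r(\Omega))$ applies, and then let $\delta\to0$ by dominated convergence, using $(u+1)^\lambda\in L^{r^*}((0,T)\times\Omega)$ and $\partial_t u\in L^{r}((0,T)\times\Omega)$; matching the time-endpoint terms requires precisely $u(T)=u_T$, which is Lemma~\ref{identeq1}.
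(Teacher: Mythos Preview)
Your proof follows essentially the same route as the paper's: the Minty monotonicity trick applied after establishing $\limsup_{\Dt\to0}\int_0^T a(\pi\m_{\Dt},\pi\m_{\Dt})\,dt\le\int_0^T(\hat F,\m)\,dt$, which in turn is obtained from the discrete energy identity \eqref{sum2eqs}, inequality \eqref{ineqa}, the limit equation of Lemma~\ref{identeq1}, and the identifications of Lemma~\ref{identeq0}(iv),(v). The paper's argument is identical in structure.

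The only substantive differences are in the handling of two points that the paper leaves terse. First, you make explicit the weak lower semicontinuity step $\liminf\intd{\phi,(u^J)^r}\ge\intd{\phi,u(T)^r}$, which the paper glosses over when it passes from $\tfrac1{r^*}\intd{\phi,(u^J)^r-(u^0)^r}$ directly to $\int_0^T\intd{\phi\,\partial_t u^\lambda,u}\,dt$. Second, for the chain-rule identity $\int_0^T\intd{\phi\,\partial_t u^\lambda,u}\,dt=\tfrac1{r^*}\intd{\phi,u(T)^r-u_0^r}$ the paper simply records this as Proposition~\ref{aux1}, citing it as a particular case of Lemma~1.2 in Raviart~\cite{raviart69}, whereas you outline a self-contained proof via the extra regularity $\partial_t u\in L^r(0,T;L^r(\Omega))$ from Lemma~\ref{boundedness2} and a regularization $(s+\delta)^\lambda$. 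Your sketch is reasonable; the one point deserving care is that $\intd{\phi\,\partial_t u^\lambda,u}$ is a priori an $R'\!-\!R$ duality pairing (since $\partial_t u^\lambda$ only lives in $L^\infty(0,T;R')$), so one must check either that $\phi u\in R(\Omega)$---which would require regularity of $\phi$ beyond the paper's hypothesis $\underline\phi\le\phi\le\overline\phi$---or, more in line with Raviart's argument, work instead with the equation $\phi\,\partial_t u^\lambda=f-\nabla\cdot\m$ paired against $u$ and invoke the chain rule in the form proved there. Apart from this technical caveat, your argument is correct and matches the paper's.
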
  

To show this, we need an auxiliary result, a particular result of Lemma 1.2 in \cite{raviart69}.  
%==================================
\begin{proposition}\label{aux1}
The limit $u$ of $\pi  u_{\Dt}$ satisfy 
\beq
\int_0^T  \left(\phi \partial_t u^\lambda(t) , u(t)\right) dt =\frac {1}{ r^*}   \int_\Omega\phi u^ r(T)   - \phi u^ r(0)  dx. 
\eeq
\end{proposition}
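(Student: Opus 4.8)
The plan is to read the asserted equality as the time\nobreakdash-integrated chain rule for the doubly nonlinear term $\partial_t u^\lambda$: it is a particular case of the abstract lemma of \cite{raviart69} (Lemma~1.2; cf.\ also the Alt--Luckhaus chain rule), and the work consists in checking that the limit $u$ coming out of \eqref{b1}--\eqref{b4} has exactly the regularity that lemma requires. First I would collect, from \eqref{b1}--\eqref{b4} and Lemma~\ref{identeq0}, that $u\in L^\infty(0,T;R(\Omega))\subset L^{r}(0,T;R(\Omega))$, that $u^\lambda=U\in L^\infty(0,T;L^{r^*}(\Omega))\subset L^\infty(0,T;L^1(\Omega))$, and that $\partial_t u^\lambda=U'\in L^\infty(0,T;R'(\Omega))\subset L^{r^*}(0,T;R'(\Omega))$; since $u\ge 0$, the function $b(z):=z^\lambda$ is continuous and strictly increasing on $[0,\infty)$. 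I would also note that by \eqref{firsteq} the functional $\phi\,\partial_t u^\lambda=f-\nabla\cdot\m$ belongs to $L^\infty(0,T;R'(\Omega))$, so that the pairing $(\phi\,\partial_t u^\lambda(t),u(t))=\langle f-\nabla\cdot\m,u\rangle_{R'\times R}$ appearing in the statement is well defined for a.e.\ $t$.

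Next I would exhibit the convex potential. For $b(z)=z^\lambda$ put $\mathcal B(z):=z\,b(z)-\int_0^z b(\sigma)\,d\sigma=z^{1+\lambda}-\tfrac{z^{1+\lambda}}{1+\lambda}=\tfrac{\lambda}{1+\lambda}z^{1+\lambda}=\tfrac1{r^*}z^{r}$, using $1+\lambda=r$ and $\lambda/(1+\lambda)=1/r^*$. Raviart's lemma then says that, with the regularity above, $t\mapsto\int_\Omega\phi\,\mathcal B(u(t))\,dx=\tfrac1{r^*}\int_\Omega\phi\,u^{r}(t)\,dx$ is absolutely continuous on $[0,T]$ and
\beqs
\ddt\int_\Omega\phi\,\mathcal B(u(t))\,dx=\bigl\langle\phi\,\partial_t u^\lambda(t),u(t)\bigr\rangle_{R'\times R}\qquad\text{for a.e.\ }t\in(0,T).
\eeqs
Integrating over $(0,T)$ and using $u(0)=u_0$, $u(T)=u_T$ from Lemma~\ref{identeq1} gives exactly the claim.

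If a proof avoiding a direct citation of \cite{raviart69} is preferred, the same identity follows from the semi-discrete solutions. Applying \eqref{ineqa} pointwise with $(a,b)=(u^j(x),u^{j-1}(x))$ and then with the roles reversed yields the two-sided estimate
\beqs
\sum_{j=1}^J\bigl(\phi((u^j)^\lambda-(u^{j-1})^\lambda),u^{j-1}\bigr)\ \le\ \tfrac1{r^*}\bigl(\phi,(u^J)^{r}-(u^0)^{r}\bigr)\ \le\ \sum_{j=1}^J\bigl(\phi((u^j)^\lambda-(u^{j-1})^\lambda),u^{j}\bigr),
\eeqs
the middle quantity telescoping. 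The difference of the two bounding sums is $\sum_j(\phi((u^j)^\lambda-(u^{j-1})^\lambda),u^j-u^{j-1})$, and by \eqref{H-cont} its integrand is $\le|u^j-u^{j-1}|^{r}$, so the difference is $\le\bar\phi\,h^{r-1}\sum_j h\,\|(u^j-u^{j-1})/h\|_{L^{r}}^{r}\le\bar\phi\,h^{r-1}\mathscr C\to0$ by Lemma~\ref{boundedness2} since $r>1$. One then rewrites the upper sum as $\int_0^T(\phi\,\partial_t\Pi u_{\Dt}^\lambda,\pi u_{\Dt})\,dt$, replaces $\phi\,\partial_t\Pi u_{\Dt}^\lambda$ by $\pi f_{\Dt}-\pi\nabla\cdot\m_{\Dt}$ via the discrete equation, passes to the limit with \eqref{b1}--\eqref{b4} and \eqref{firsteq}, and treats the term $(u^J)^{r}$ through the weak lower semicontinuity of $q\mapsto\tfrac1{r^*}\int_\Omega\phi\,q^{r}\,dx$ together with $u^J\rightharpoonup u_T=u(T)$.

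The hard part is precisely this passage to the limit: $\int_0^T(\phi\,\partial_t\Pi u_{\Dt}^\lambda,\pi u_{\Dt})\,dt$ pairs a family that is only weakly$^*$ compact in $L^\infty(0,T;R'(\Omega))$ against one that is only weakly compact in $L^\infty(0,T;R(\Omega))$ --- the a priori bounds yield strong compactness of $\pi u_{\Dt}$ merely in $L^{r}((0,T)\times\Omega)$, not in $L^1(0,T;R(\Omega))$ --- so the limit cannot be taken term by term. This is exactly the difficulty resolved inside the proof of Lemma~1.2 of \cite{raviart69} by a time\nobreakdash-mollification and convexity argument, which is why I would present the result as an application of that lemma, the estimates of Section~\ref{TransProb} serving only to verify its hypotheses.
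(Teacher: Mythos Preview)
Your proposal is correct and follows the paper's own route: the paper does not prove Proposition~\ref{aux1} at all but merely declares it ``a particular result of Lemma~1.2 in \cite{raviart69}'', and your argument is precisely the verification of that lemma's hypotheses (with the correct identification of the potential $\mathcal B(z)=\tfrac1{r^*}z^{r}$). Your supplementary discrete argument and the remark on why the naive limit passage fails are extra content the paper omits.
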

%==================================
\begin{proof}
Equation \eqref{sum2eqs} rewrite as
\beqs
\intd{F(x,t,|\m^j|)\m^j , \m^j }+\intd{\phi\frac{ (u^j)^\lambda -  (u^{j-1})^\lambda}{h},  u^j} =\intd{f^j,  u^j}.
\eeqs

Since $ \intd{\phi\frac{ (u^j)^\lambda -  (u^{j-1})^\lambda}{h},  u^j} \ge \frac {1} {\Dt r^*} \intd{ \phi, (u^j)^{ r} -  (u^{j-1})^{ r} } $,
\beqs
\intd{ F(x,t,|\m^j|)\m^j, \m^j}+ \frac {1} {\Dt r^*} \intd{ \phi, (u^j)^{ r} -  (u^{j-1})^{ r} } \le \intd{f^j,  u^j}.
\eeqs
Multiplying $\Dt$ and summing up $j=1,\ldots, J$, we obtain  
\beqs
 \int_{0}^{T} \intd{ F(x,t,|\pi\m_{\Dt}|)\pi\m_{\Dt},\pi \m_{\Dt} }dt+\frac 1 { r^*} \intd{ \phi, (u^J)^{ r} -  (u^0)^{ r} } \le \int_{0}^{T} \intd{ \pi f_{\Dt}, \pi u_{\Dt} } dt.
\eeqs
We take the limit inferior to conclude that
\beqs
 \liminf_{\Dt\to 0}\int_{0}^{T} \intd{  F(x,t,|\pi\m_{\Dt}|)\pi\m_{\Dt},\pi \m_{\Dt} }dt+\frac {1} { r^*} \intd{ \phi, (u^J)^{ r} -  (u^0)^{ r} } \le \int_{0}^{T} \intd{ f, u } dt.
\eeqs
Using the result of Proposition~\ref{aux1}, we find that 
\beqs
 \liminf_{\Dt\to 0}\int_{0}^{T} \intd{ F(x,t,|\pi\m_{\Dt}|)\pi\m_{\Dt},\pi \m_{\Dt} } dt+  \int_0^T \intd{\phi \partial_t u^\lambda,u } dt   \le \int_{0}^{T} \intd{f, u} dt.
\eeqs
On the other hand, Lemma~\ref{identeq1} gives  
\beqs
\int_0^T\intd{\phi \partial_t u^\lambda, u} dt +\int_0^T \intd{\nabla\cdot \m, u} dt = \int_0^T \intd{f, u} dt. 
\eeqs
From \eqref{coincide2} and \eqref{coincide3} in Lemma~\ref{identeq0} we have 
\beqs
\int_0^T \intd{\nabla \cdot\m, u} dt = -\int_0^T \intd{ \m,\nabla u} dt =\int_0^T (\m,\hat F)dt.
\eeqs
 Therefore, 
 \beqs
  \liminf_{\Dt\to 0}\int_{0}^{T}  \intd{ F(x,t,|\pi\m_{\Dt}|)\pi\m_{\Dt},\pi \m_{\Dt} } dt\le \int_0^T (\m,\hat F)dt.
\eeqs
We have shown that for arbitrary $\vv\in L^\infty(0,T; (L^s(\Omega))^d )$
\beqs
\int_0^T(\hat F - F(x,t,|\vv|\vv), \m-\vv) dt  \ge \liminf_{\Dt\to 0} \int_0^T\intd{ F(x,t,|\pi\m_{\Dt}|)\pi\m_{\Dt} - F(x,t,|\vv|)\vv , \pi \m_{\Dt} -\vv } dt \ge 0. 
\eeqs
Choose $\vv= \m -\lambda  \varphi$, $\lambda>0$, $\varphi\in L^\infty(0,T; (L^s(\Omega))^d ) $ 
\beqs
\int_0^T(\hat F - F(x,t,|\m -\lambda  \varphi|)(\m -\lambda  \varphi), \lambda\varphi) dt \ge 0. 
\eeqs
Dividing $\lambda$ and letting $\lambda\to 0$, we obtain 
\beqs
\int_0^T(\hat F - F(x,t,|\m|)\m, \varphi dt \ge 0 \quad \text{ for all } \varphi\in L^\infty(0,T; (L^s(\Omega))^d ). 
\eeqs
This implies $\hat F = F(x,t,|\m|)\m$.  (see in the proof of Thm. 1.1 in \cite{raviart69} page 313).
\end{proof}
%==================================

%Now we are in a position to formulate and prove our main result.
\begin{theorem}\label{ExistenceSol}
For all $f \in L^\infty(0, T; L^{ r}(\Omega)) $ that is Lipschitz continuous in time $t$. There exists  a  pair $(\m, u) \in L^\infty(0,T; W({\rm div}, \Omega))\times L^\infty(0,T; L^{ r}(\Omega)),$ such that
\begin{align*}
\int_0^T \intd{F(x, |\m|)\m, \vv} dt - \int_0^T \intd{\nabla \cdot \vv, u} =0\quad& \text{ for all } \vv \in L^1(0,T; L^{ r}(\Omega)),\\
\int_0^T \intd{\phi \partial_t u^\lambda, q}dt +\int_0^T \intd{\nabla\cdot \m, q} dt=\int_0^T\intd{f,q} dt \quad & \text{ for all } q \in L^1(0,T; L^{ r}(\Omega)).
\end{align*}
\end{theorem}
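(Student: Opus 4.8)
The plan is to assemble the theorem from the uniform a priori estimates and the limit identifications already established. First I would fix, for each $J\in\N$ (equivalently each time step $h=T/J$), the $(J+1)$-tuple $(\m^j,u^j)_{j=0,\dots,J}$ solving the semi-discrete mixed formulation \eqref{semidiscrete-prob}, which exists and is unique by Theorem~\ref{Sol-semidiscreteProb}, and pass to the piecewise-constant interpolants $\pi u_h,\pi\m_h$ and the piecewise-linear interpolant $\Pi u_h$. Lemmas~\ref{boundedness1}, \ref{boundedness2} and \ref{boundedness3} bound all of these uniformly in $h$, so along a subsequence I obtain the weak-$*$ limits \eqref{b1}--\eqref{b3} --- $\pi u_h\stackrel{*}{\rightharpoonup}u$, $\pi\m_h\stackrel{*}{\rightharpoonup}\m$, $F(x,|\pi\m_h|)\pi\m_h\stackrel{*}{\rightharpoonup}\hat F$, and so on --- together with the identifications $U=u^\lambda$, $\partial_t u^\lambda=U'$, $\nabla\cdot\m=\bar\m$ and $\hat F=-\nabla u$ from Lemma~\ref{identeq0}.

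Next I would pass to the limit in each equation. For the first one, testing $a(\m^j,\vv)-b(\vv,u^j)=0$ against a time weight, summing over $j$, and rewriting through the step function $\pi$ gives
\[
\int_0^T \intd{F(x,|\pi\m_h|)\pi\m_h,\vv}\,dt-\int_0^T\intd{\nabla\cdot\vv,\pi u_h}\,dt=0
\]
modulo terms that vanish as $h\to0$ because the interpolated data converge strongly, with $\vv$ ranging over the test space of the statement, whose regularity is exactly what keeps the second pairing meaningful. Letting $h\to0$ and using \eqref{b3} and \eqref{b1} gives $\int_0^T\intd{\hat F,\vv}\,dt-\int_0^T\intd{\nabla\cdot\vv,u}\,dt=0$, and then Lemma~\ref{Fhat-Eq-F}, which identifies $\hat F=F(x,|\m|)\m$ in $L^\infty(0,T;(L^{s^*}(\Omega))^d)$, yields the first identity of the theorem. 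The second identity is essentially Lemma~\ref{identeq1}, which already establishes $\phi\,\partial_t u^\lambda+\nabla\cdot\m=f$ in $L^\infty(0,T;L^{r}(\Omega))$ (with $\partial_t u^\lambda$ and $\nabla\cdot\m$ interpreted via Lemma~\ref{identeq0}); pairing with an arbitrary $q\in L^1(0,T;L^{r}(\Omega))$ and integrating in time gives the stated weak form, and the same lemma records $u(0)=u_0$. The membership $u\in L^\infty(0,T;L^{r}(\Omega))$ and $\m\in L^\infty(0,T;(L^s(\Omega))^d)$ follows from the uniform bounds and weak-$*$ lower semicontinuity of the norms, while $\nabla\cdot\m=f-\phi\,\partial_t u^\lambda$ places $\m$ in $L^\infty(0,T;W(\mathrm{div};\Omega))$.

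The hard part, and the only step that is not routine, is the identification of the nonlinear weak limit $\hat F$: it cannot be obtained by compactness alone and instead needs the Minty-type monotonicity argument of Lemma~\ref{Fhat-Eq-F}, which itself rests on the energy identity for the doubly nonlinear term in Proposition~\ref{aux1} and on the strong $L^{r}$-convergence of $\Pi u_h$ coming from the Rellich--Kondrachov compactness used in the proof of Lemma~\ref{identeq0}(i). Once those are in place, proving the theorem amounts to a careful passage to the limit and bookkeeping with the interpolants.
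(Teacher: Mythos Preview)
Your proposal is correct and follows essentially the same route as the paper: extract the weak-$*$ limits $(\m,u,\hat F)$ from the semi-discrete interpolants via the uniform bounds, invoke Lemma~\ref{identeq1} for the second equation, and use the Minty-type identification of Lemma~\ref{Fhat-Eq-F} (together with Lemma~\ref{identeq0}) to obtain the first. The only cosmetic difference is that the paper derives the first identity by the chain $\int_0^T(F(|\m|)\m,\vv)\,dt=\int_0^T(\hat F,\vv)\,dt=-\int_0^T(\nabla u,\vv)\,dt=\int_0^T(u,\nabla\cdot\vv)\,dt$ via Lemma~\ref{identeq0}(v), whereas you pass to the limit directly in the time-summed discrete first equation; these are equivalent, and your added remarks on why $\m\in L^\infty(0,T;W(\mathrm{div};\Omega))$ and on the role of the Rellich--Kondrachov step are accurate elaborations of points the paper leaves implicit.
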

\begin{proof}
Let $\m$ be the limit of $\pi\m_{\Dt}$ and $u$ be the limit of $\pi u_{\Dt}$. Then Lemma~\ref{Fhat-Eq-F} and Lemma~\ref{identeq0} part (iii)  imply that
\beqs
\int_0^T \intd{F(x, t, |\m|)\m,\vv } dt = \int_0^T (\hat F,\vv ) dt =- \int_0^T (\nabla u,\vv ) dt= \int_0^T \intd{u, \nabla\cdot \vv }dt,
\eeqs
for all $ \vv \in L^1(0,T; L^{ r}(\Omega))$. In Lemma~\ref{identeq1} we have seen
that $(\m, u)$ fulfills the second equation.
\end{proof}

%{\bf Acknowledgments.} The author sincerely thanks the referees for their careful reading of this article and for helpful comments and valuable suggestions which improve the manuscript. 

%====================================================

\myclearpage
\myclearpage
\appendix

%%%%%%%%%%%%%%%%%%%%%%%%%%%%%%%%%%%%%%%%%%%%%%%%%%%%%%%%%%%%%%%%%%%%%
% Bibliography using BibTeX
%%%%%%%%%%%%%%%%%%%%%%%%%%%%%%%%%%%%%%%%%%%%%%%%%%%%%%%%%%%%%%%%%%%%%
%%%%%%%%%%%%%%%%%%%%%%%%%%%%%%%%%%%%%%%%%%%%%%%%%%%%%%%%%%%%%%%%%%%%%

\def\cprime{$'$} \def\cprime{$'$} \def\cprime{$'$}

\end{document}